\newtheorem{theorem}{Theorem}[section]
\newtheorem{lemma}[theorem]{Lemma}
\newtheorem{remark}[theorem]{Remark}
\newtheorem{proposition}[theorem]{Proposition}
\newtheorem{corollary}[theorem]{Corollary}
\newtheorem{definition}[theorem]{Definition}
\pgfplotsset{compat=newest}
\title{Stable submanifolds in the product of  Projective Spaces}
\author{Alejandra Ramirez-Luna}
\begin{document}

\begin{abstract}
We provide a classification theorem for compact stable minimal immersions (CSMI) of codimension $1$ or dimension $1$ (codimension $1$ and $2$ or dimension $1$ and $2$) in the product of a complex (quaternionic) projective space with any other Riemannian manifold. We characterize the complex minimal immersions of codimension $2$ or dimension $2$ as the only CSMI in the product of two complex projective spaces. As an application, we characterize the CSMI of codimension $1$ or dimension $1$ (codimension $1$ and $2$ or dimension $1$ and $2$) in the product of a complex (quaternionic) projective space with any compact rank one symmetric space.
\end{abstract}
\maketitle

\begin{section}{Introduction }
Let $M$ be a Riemannian manifold of dimension $n+d$. It is a really interesting problem to know what are the submanifolds $\Sigma$ of dimension $n$ of $M$ that  minimizes area under perturbations.  For example in the Euclidean space $\mathbb{R}^{3}$, in Fig.\ref{perturbationplane} we can see 
intuitively that the blue plane $z=0$ has less area than the perturbation, in the figure represented by the color degradation (see \cite{do1979stable} and \cite{chodosh2021stable}).
\begin{figure}
\begin{centering}
\begin{tikzpicture}
\begin{axis}
\addplot3 [surf] {-0.1};
\addplot3[
    surf,
]
{exp(-0.2*x^2-0.2*y^2)};
\end{axis}
\end{tikzpicture}
\end{centering}
\caption{Perturbation of plane in $\mathbb{R}^{3}$}
\label{perturbationplane}
\end{figure}
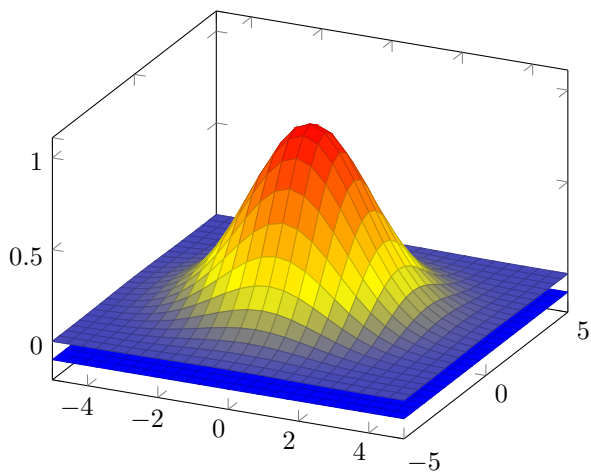

Since it is a minimization problem, one condition $\Sigma$ must satisfy is that it be a critical point of the area functional (i.e. minimal).  Returning to the case of the plane in $\mathbb{R}^{3}$, we can see then that the plane is minimal. But contrary to the previous example, there are many examples where the submanifold is minimal and it does not minimize area. For example, if we perturb an equator, $E$, in the Euclidean sphere along normal direction with constant height, we get a circle $P(E)$ with smaller radius, and thus, smaller length (see Fig.\ref{persp}). 

\begin{figure}
\begin{tikzpicture}[tdplot_main_coords, scale = 2.5]
 
\shade[ball color=green!10!white,opacity=0.4,
    opacity = 0.4
] (0,0,0) circle (1cm);

\tdplotsetrotatedcoords{0}{0}{0};
\draw[,
	tdplot_rotated_coords,
	black
] (0,0,0) circle (1);

\tdplotsetrotatedcoords{0}{0}{0};
\draw[,
	tdplot_rotated_coords,
	red
] (0,0,0.2) circle (0.98) ;

\node [above, text=red](A) at (-0.48,-0.6,0.35) {$P(E)$};

\node [below](B) at (1,0,0) {$E$};
 
\draw[-stealth] (0,0,0) -- (1.80,0,0) 
    node[below left] {$x$};
 
\draw[-stealth] (0,0,0) -- (0,1.30,0)
    node[below right] {$y$};
 
\draw[-stealth] (0,0,0) -- (0,0,1.30)
    node[above] {$z$};
 
\draw[dashed, gray] (0,0,0) -- (-1,0,0);
\draw[dashed, gray] (0,0,0) -- (0,-1,0);
\end{tikzpicture}
\caption{Perturbation $P(E)$ of equator $E$ in $S^{2}$.}
\label{persp}
\end{figure}
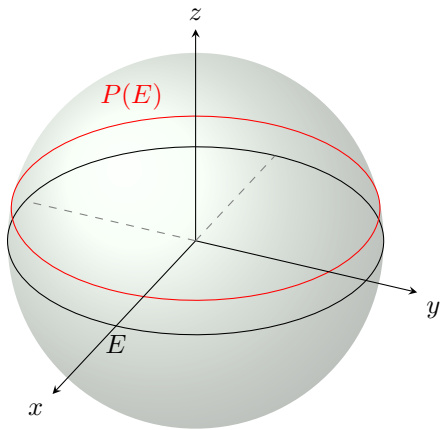

 Therefore, we have to study the second variation of the area functional. More specifically, we need the second variation to be non negative for all possible perturbations of $\Sigma$. If $\Sigma$ is as described above, we say that $\Sigma$ is stable. The second variation of the area functional defines an operator acting on normal sections of $\Sigma$ (see Preliminaries), called the Jacobi or stability operator. The number of negative eigenvalues counting multiplicity is what we call the Morse index. Then, $\Sigma$ is stable if its Morse index is zero.
 
 For a long time, it has been object  of study if a Riemannian manifold has stable submanifolds and if the case to characterize them. In particular when the Riemannian manifold is well known, as the Euclidean space, the Euclidean sphere, and projective spaces. For example, in the $3$-dimensional Euclidean space , Do Carmo and Peng \cite{do1979stable} proved that planes are the only stable complete minimal surfaces in $\mathbb{R}^{3}$. Recently, in the same direction, Chodosh and Li proved that a complete, two-sided, stable minimal hypersurface in $\mathbb{R}^{4}$ must be flat \cite{chodosh2021stable}. For the case of compact stable minimal immersions (CSMI), Simons in \cite{simons1968minimal} proved that there are no CSMI in the Euclidean sphere. 
\begin{theorem}\cite{simons1968minimal}
\label{t1.1}
Let $\Sigma$ be a compact, minimal $n$-dimensional submanifold immersed in $S^{n+d}$. Then, the index of $\Sigma$ is greater than or equal to $d$, and equality holds only when $\Sigma$ is $S^{n}$. 
\end{theorem}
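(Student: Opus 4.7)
The plan is to exploit the isometric inclusion $S^{n+d}\subset\mathbb{R}^{n+d+1}$ and produce, for every constant vector $a\in\mathbb{R}^{n+d+1}$, a canonical test section for the Jacobi operator by projecting $a$ onto the normal bundle of $\Sigma$. First I would write down the index form explicitly: for a minimal $\Sigma^{n}\hookrightarrow S^{n+d}$ and a normal section $V$, the second variation is
\[
Q(V,V)=\int_{\Sigma}\bigl(|\nabla^{\perp}V|^{2}-|A_{V}|^{2}-n|V|^{2}\bigr)\,d\mathrm{vol},
\]
where $|A_{V}|^{2}=\sum_{i,j}\langle B(e_i,e_j),V\rangle^{2}$ and the $-n|V|^{2}$ term comes from the Ricci curvature of the unit sphere. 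Stability is then the statement $Q\geq 0$ on all normal sections, and the Morse index counts the maximal dimension of a subspace on which $Q$ is negative definite.

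Next, for each $a\in\mathbb{R}^{n+d+1}$, decompose at every $p\in\Sigma$
\[
a=a^{\top}+V_{a}(p)+\langle a,p\rangle\,p,
\]
where $V_{a}(p)\in N_{p}\Sigma$ is the component normal to $\Sigma$ inside $S^{n+d}$. Using that $a$ is parallel in the ambient Euclidean space, I would differentiate this decomposition along tangent directions to express $\nabla^{\perp}V_{a}$ and $\Delta^{\perp}V_{a}$ in terms of the second fundamental form $B$, the tangent component $a^{\top}$, and the height function $\langle a,\cdot\rangle$. The minimality hypothesis enters when taking traces, allowing several error terms to vanish or integrate away. Substituting these identities into $Q(V_{a},V_{a})$ yields an explicit expression solely in terms of $B$, $a^{\top}$ and the height.

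The decisive step is to sum over an orthonormal basis $\{a_{1},\dots,a_{n+d+1}\}$ of $\mathbb{R}^{n+d+1}$: the tangent and height contributions collapse by completeness, and the $B$-terms combine to give an identity of the shape
\[
\sum_{k=1}^{n+d+1}Q(V_{a_{k}},V_{a_{k}})=-c\int_{\Sigma}|B|^{2}\,d\mathrm{vol},\qquad c>0.
\]
Hence if $\Sigma$ is not totally geodesic, some $V_{a}$ satisfies $Q(V_{a},V_{a})<0$. To upgrade this to $\mathrm{index}(\Sigma)\geq d$, I would consider the bilinear form $\widetilde{Q}(a,b):=Q(V_{a},V_{b})$ on $\mathbb{R}^{n+d+1}$ and show, using the pointwise decomposition above, that its negative-index subspace has dimension at least $d$, the fibre dimension of $N\Sigma$. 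For the equality clause, $|B|^{2}\equiv 0$ forces $\Sigma$ to be totally geodesic, hence a great $n$-sphere $S^{n}\subset S^{n+d}$; a direct Laplacian eigenvalue computation on $S^n$ confirms its index is exactly $d$.

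The main obstacle I anticipate is the dimension count in the last step: the assignment $a\mapsto V_{a}$ has a non-trivial kernel (vectors pointwise tangent to $\Sigma$ or proportional to the position vector), so one must separate the genuinely negative directions from this kernel and verify that the quotient has rank at least $d$. The saving point is that for each $p\in\Sigma$ the projection $\mathbb{R}^{n+d+1}\to N_{p}\Sigma$ is surjective, so a careful choice of basis adapted to a single point, combined with the pointwise negativity produced by the trace identity, yields the required $d$-dimensional negative-definite subspace.
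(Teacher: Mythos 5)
The statement you are proving is not proved in the paper at all: it is quoted from Simons, and the paper only records the method in one sentence (evaluate the stability operator on the projections of the constant parallel vector fields of $\mathbb{R}^{n+d+1}$ onto $N_{\Sigma}^{S^{n+d}}$). Your overall strategy --- the decomposition $a=a^{\top}+V_{a}+\langle a,p\rangle p$ and the use of the sections $V_{a}$ as test fields --- is exactly that method, and your index form and the identity $\nabla^{\perp}_{X}V_{a}=-B(X,a^{\top})$ that it implies are correct.

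However, your central trace identity is wrong, and the error is fatal to the argument. Substituting $V_{a}=a^{N}$ into the index form gives $Q(V_{a},V_{a})=\int_{\Sigma}\bigl(\sum_{i}|B(e_{i},a^{\top})|^{2}-n|a^{N}|^{2}-\sum_{i,j}\langle B(e_{i},e_{j}),a^{N}\rangle^{2}\bigr)$, and upon summing over an orthonormal basis of $\mathbb{R}^{n+d+1}$ the two second-fundamental-form terms each become $\int_{\Sigma}|B|^{2}$ and cancel: the correct identity is $\sum_{k}Q(V_{a_{k}},V_{a_{k}})=-nd\,\mathrm{Vol}(\Sigma)$, not $-c\int_{\Sigma}|B|^{2}$. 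This sign structure is forced by the statement itself: the theorem asserts index $\geq d\geq 1$ for \emph{every} compact minimal $\Sigma$, including the totally geodesic $S^{n}$, so no identity that vanishes when $B\equiv 0$ can drive the proof; with your identity the great sphere would come out stable. Moreover, even a correct negative trace only produces \emph{one} negative direction, so the step ``upgrade to $\mathrm{index}\geq d$'' that you defer to the bilinear form $\widetilde{Q}$ is precisely where the real content lies, and it is left unexecuted. The missing idea is that minimality and the Codazzi equation give $\triangle^{\perp}V_{a}=-\sum_{i,j}\langle B(e_{i},e_{j}),V_{a}\rangle B(e_{i},e_{j})$, hence $J_{\Sigma}(V_{a})=nV_{a}$ pointwise; thus $Q(V_{a},V_{a})=-n\int_{\Sigma}|V_{a}|^{2}<0$ for every $a$ with $V_{a}\not\equiv 0$, the kernel of $a\mapsto V_{a}$ has dimension at most $n+1$ (each such $a$ lies in $T_{p}\Sigma\oplus\mathbb{R}p$ at every $p$), so the span of the $V_{a}$ is a negative-definite subspace of dimension at least $(n+d+1)-(n+1)=d$, and equality of the kernel dimension forces $\Sigma$ to lie in, hence equal, a great $S^{n}$. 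Your final paragraph gestures at this kernel count, but without the eigensection identity the argument does not close.
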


He used the fact that $S^{n+d}$ is an hypersurface of $\mathbb{R}^{n+d+1}$. More precisely, the stability operator was evaluated on the projections $w$ of the constant parallel vector fields $v\in \mathbb{R}^{n+d+1}$ in $N_{\Sigma}^{S^{n+d}}$, where in  general $N_{\Sigma}^{M}$ denote the normal space of $\Sigma$ in $M$ (see Fig. \ref{ESF}). 

\begin{figure}
\begin{tikzpicture}[tdplot_main_coords, scale = 2.5]
\coordinate (Q) at ({0},{0},{1});

\shade[ball color = lightgray,
	opacity = 0.5
] (0,0,0) circle (1cm);

\draw[-stealth] (0,0,0) -- (1.80,0,0) 
	node[below left] {$x$};

\draw[-stealth] (0,0,0) -- (0,1.30,0)
	node[below right] {$y$};

\draw[-stealth] (0,0,0) -- (0,0,1.30)
	node[above] {$z$};

\draw[dashed, gray] (0,0,0) -- (-1,0,0);
\draw[dashed, gray] (0,0,0) -- (0,-1,0);

\draw[fill = red!50] (Q) circle (0.5pt);

\tdplotsetrotatedcoords{0}{0}{0};
\draw[dashed,
	tdplot_rotated_coords,
	gray
] (0,0,0) circle (1);

\tdplotsetrotatedcoords{90}{90}{90};
\draw[
	tdplot_rotated_coords,
	blue
] (1,0,0) arc (0:360:1);

\fill[pink!50,opacity=0.4] (-1.2,1.2,1) -- (1.2,1.2,1) -- (1.2,-1.2,1) -- (-1.2,-1.2,1) -- (-1.2,1.2,1);

\draw[gray, thick] (0,0,1) -- (0,1.2,1);
\draw[gray, thick] (0,0,1) -- (0,-1.2,1) node[above] {$N_{\Sigma}^{S^{2}}$};


\color{blue}
\node [below](hip) at (0,0,-1.01) {$\Sigma$};
\color{black}

\draw[red, thick, -stealth] (0,0,1) -- (1.2,0,1) node[left] {$T\Sigma$};

\coordinate (P) at ({-1/sqrt(3)},{0.791623},{0.2});

\draw[green, thick, -stealth] (0,0,0) -- (P) node[right] {$v$};

\coordinate (V) at ({-1/sqrt(3)},{0.791623},{1+0.2});

\draw[green, -stealth] (0,0,1) -- (V) node[right] {$v$};
\draw[thick, -stealth] (0,0,1) -- (0,0.791623,1) node[below] {$w$};
\draw[thin, dashed] (V) --(0,0.791623,1);
\end{tikzpicture}
\caption{Normal sections.}
\label{ESF}
\end{figure}
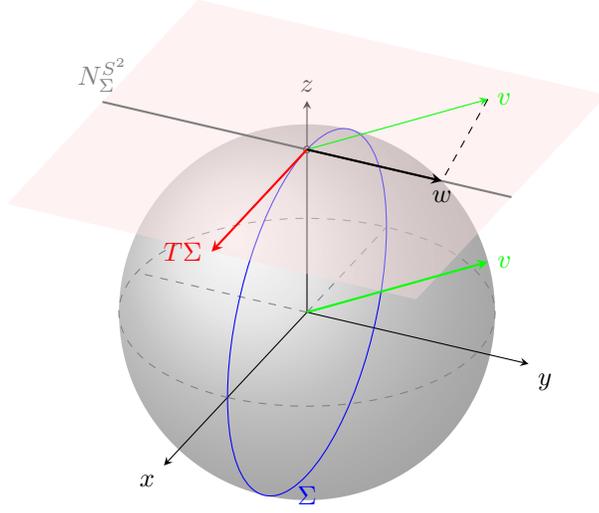

Afterwards, Lawson and Simons in \cite{lawson1973stable} characterized the complex submanifolds (in the sense that each tangent space of the submanifold is invariant under the complex structure) as the only CSMI in the complex projective spaces. This was followed by Ohnita \cite{ohnita1986stable}, who completed the classification of CSMI in all compact rank one symmetric spaces,

 \begin{theorem}\cite{ohnita1986stable}
 \label{ohnita0}
 Let $\Sigma$ be a compact minimal $n$-dimensional submanifold immersed in the real projective space $\mathbb{R}P^{n+d}$ with the standard metric. Then, $\Sigma$ is stable if and only if $\Sigma$ is a real projective subspace $\mathbb{R}P^{n}$ of $\mathbb{R}P^{n+d}$.
 \end{theorem}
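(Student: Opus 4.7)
The plan is to imitate Simons' strategy from Theorem~\ref{t1.1}, working with the double cover $\pi\colon S^{n+d}\to\mathbb{R}P^{n+d}$ and replacing the test sections that do not descend to the quotient. For the sufficiency direction, lift $\Sigma=\mathbb{R}P^n\subset\mathbb{R}P^{n+d}$ to the totally geodesic $S^n\subset S^{n+d}$ and trivialize $N_{S^n}^{S^{n+d}}$ by the constant frame $e_{n+2},\dots,e_{n+d+1}$. Because the lift is totally geodesic and $S^{n+d}$ has constant sectional curvature $1$, the Jacobi operator acts on a normal section $V=\sum_i f_i\,e_{n+1+i}$ by $JV=\sum_i(-\Delta_{S^n}f_i-n f_i)\,e_{n+1+i}$. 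Antipodal equivariance forces each admissible $f_i$ to be odd on $S^n$, and the first eigenvalue of $-\Delta_{S^n}$ on odd spherical harmonics equals $n$; hence $J\ge0$ on the descending sections and $\mathbb{R}P^n$ is stable.

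For necessity, fix a stable compact minimal $\Sigma$ and build a family of test sections from symmetric matrices. For each $A\in\mathrm{Sym}(n+d+1)$, the vector field $W_A(x)=Ax-\langle Ax,x\rangle x$ on $S^{n+d}$ is tangential and odd in $x$ (it is the spherical gradient of the even function $\tfrac12\langle Ax,x\rangle$), hence descends to a smooth vector field on $\mathbb{R}P^{n+d}$. I would take $\eta_A=(W_A)^\perp$ as a test section in $N_\Sigma$ and substitute into the standard second-variation formula for a minimal submanifold of a space of constant sectional curvature $1$, which integrates $|\nabla^\perp\eta_A|^2$, $-n|\eta_A|^2$, and a Simons-type quadratic in the second fundamental form $B$ paired with $\eta_A$. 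Summing over an orthonormal basis of $\mathrm{Sym}(n+d+1)$ and applying the standard symmetric-tensor completeness identity together with the minimality $\mathrm{tr}\,B=0$, each integrand reduces to a universal combination of $|B|^2$ and constants in $n,d$, giving an identity
\begin{equation*}
\sum_A Q(\eta_A,\eta_A)=\int_{\Sigma}\bigl(\alpha(n,d)-\beta(n,d)\,|B|^2\bigr)\,dV
\end{equation*}
with $\beta>0$.

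Stability makes the left-hand side nonnegative, and a refinement of the test family (for instance restricting to traceless $A$, which parametrize the first nontrivial eigenspace of $\Delta_{\mathbb{R}P^{n+d}}$) forces equality in the estimate, hence $B\equiv0$. Then $\Sigma$ is totally geodesic in $\mathbb{R}P^{n+d}$, and by the classification of connected totally geodesic submanifolds of the real projective space it must be a linearly embedded $\mathbb{R}P^n$. The main obstacle I anticipate is the algebraic bookkeeping in the summation step: the term $|\nabla^\perp\eta_A|^2$ couples the $A$-index with both the tangent geometry and the shape operator of $\Sigma$, and verifying via a Weitzenböck-type computation that after summing the cross-terms combine to yield the correct sign in front of $|B|^2$, with no residual contribution from the ambient curvature beyond what constant curvature $1$ already supplies, is the delicate part of the argument.
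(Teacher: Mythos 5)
The paper does not actually prove Theorem \ref{ohnita0}; it is quoted from \cite{ohnita1986stable}, so I am judging your proposal against the standard (Ohnita) argument. Your sufficiency direction is fine: sections of $N_{\mathbb{R}P^n}^{\mathbb{R}P^{n+d}}$ correspond to odd normal sections on $S^n$, and the first eigenvalue of $-\Delta_{S^n}$ on odd functions is $n$, so the index form is nonnegative. Your test fields $\eta_A=(W_A)^{\perp}$ are also exactly the right ones (they are the tangential projections of constant vectors under the first standard embedding $[x]\mapsto xx^{T}$). The gap is in the necessity step: the identity you posit, $\sum_A Q(\eta_A,\eta_A)=\int_\Sigma(\alpha-\beta|B|^2)\,dV$ with $\beta>0$, is false. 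Carrying out the summation with the completeness identity $\sum_A\langle Au,v\rangle\langle Aw,z\rangle=\tfrac12(\langle u,w\rangle\langle v,z\rangle+\langle u,z\rangle\langle v,w\rangle)$ gives $\sum_A|\eta_A|^2=\tfrac{d}{2}$ and $\sum_A\sum_{i,j}\langle B(e_i,e_j),\eta_A\rangle^2=\tfrac12|B|^2$; moreover $\nabla^{\perp}_{e_i}\eta_A=(Ae_i)^{\perp}-B(e_i,(Ax)^{\top})$ (where $(Ax)^{\top}$ is the part of $Ax$ tangent to $\Sigma$), the cross term sums to zero, and $\sum_A\sum_i|\nabla^{\perp}_{e_i}\eta_A|^2=\tfrac{nd}{2}+\tfrac12|B|^2$. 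Hence $\sum_A Q(\eta_A,\eta_A)=0$ identically, for every compact minimal $\Sigma$: the $|B|^2$ contribution from the gradient term cancels the Simons term exactly, and restricting to traceless $A$ changes nothing since $W_I\equiv 0$. The real projective space is precisely the degenerate case of the Lawson--Simons/Ohnita computation in which the averaged second variation carries no $|B|^2$ deficit, so no inequality of the kind you want can come out of this sum.

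Consequently $B\equiv 0$ cannot be forced the way you describe, and "refining the test family" does not help. What stability actually yields is that each individual $Q(\eta_A,\eta_A)$, being nonnegative with vanishing total, is zero, so every $\eta_A$ is a null direction of the index form, i.e. $J_{\Sigma}\eta_A=0$ for all symmetric $A$. The substantive part of the proof is the analysis of this equality case: one writes out $J_{\Sigma}\eta_A$ pointwise using the explicit formula for $\nabla^{\perp}\eta_A$ above and shows that its vanishing for all $A$ forces the second fundamental form of $\Sigma$ in $\mathbb{R}P^{n+d}$ to vanish, after which the totally geodesic classification finishes as you say. That equality-case analysis is the missing idea in your proposal, not a matter of algebraic bookkeeping; as written, the argument establishes only that the $\eta_A$ are Jacobi fields.
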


\begin{theorem}\cite{lawson1973stable,ohnita1986stable}
\label{ohnita}
Let $\Sigma$ be a compact stable minimal $n$-dimensional submanifold immersed in $\mathbb{F}P^{m}$, where $\mathbb{F}=\mathbb{C},\mathbb{H}, \mathbb{O}$ (complex, quaternion, octonion numbers, respectively). Then,
\begin{itemize}
    \item If $\mathbb{F}=\mathbb{C}$, then $n=2l$ for some integer $l$ and $\Sigma$ is a complex submanifold, in the sense that each tangent space is invariant under the complex structure of the complex projective space (Lawson and Simons).
    \item If $\mathbb{F}=\mathbb{H}$, then $n=4l$ for some integer $l$ and $\Sigma$ is a quaternionic projective subspace $\mathbb{H}P^{l}$ of $\mathbb{H}P^{m}$ (Ohnita).
    \item If $\mathbb{F}=\mathbb{O}$ and $m=2$, then $n=8$ and $\Sigma$ is a Cayley projective line (Ohnita).
\end{itemize}

\end{theorem}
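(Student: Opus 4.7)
The plan is to extend Simons' strategy (Theorem~\ref{t1.1}) by using the algebraic structure of $\mathbb{F}P^m$ to manufacture a rich family of test normal sections, then sum the index form over this family and exploit the symmetries of the Fubini--Study curvature tensor to force a rigidity conclusion. Throughout, I would write the Jacobi form as
\[
Q(\nu,\nu)=\int_{\Sigma}\Bigl(|\nabla^\perp\nu|^2-\sum_{i}\langle R^{M}(\nu,e_i)e_i,\nu\rangle-|A_\nu|^2\Bigr)dV,
\]
where $\{e_i\}$ is an orthonormal frame of $T\Sigma$ and $R^M$ is the ambient curvature.

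First, I would record that the curvature of $\mathbb{F}P^m$ has an explicit expression in terms of the $\mathbb{F}$-structure: the Kähler form $J$ for $\mathbb{F}=\mathbb{C}$, a local triple $\{J_1,J_2,J_3\}$ for $\mathbb{F}=\mathbb{H}$, and the octonionic multiplication for $\mathbb{F}=\mathbb{O}$ when $m=2$. For each structure operator $\phi$ in the relevant family and each orthonormal tangent vector $e_i$, I would take as test section the normal projection $\nu_{\phi,i}=(\phi e_i)^{\perp}$. Plugging $\nu_{\phi,i}$ into $Q$ and using the curvature formula converts the answer into algebraic data involving how $\phi$ maps $T\Sigma$ into $T\Sigma\oplus N_{\Sigma}^{M}$ and how $A$ interacts with $\phi$.

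The key step is to sum $Q(\nu_{\phi,i},\nu_{\phi,i})$ over $i$ and over $\phi$. The orthonormal-frame completion relations together with the symmetries of $R^M$ cause the gradient and second-fundamental-form terms to collapse, and I expect to be left with an identity of the form
\[
\sum_{\phi,i}Q(\nu_{\phi,i},\nu_{\phi,i})=-c\int_{\Sigma}\Bigl(\sum_{\phi,i}\bigl|(\phi e_i)^{\perp}\bigr|^2\Bigr)dV,
\]
up to a normalization constant $c>0$, where the integrand is a non-negative quantity that vanishes exactly when $T\Sigma$ is $\phi$-invariant for every $\phi$. Stability of $\Sigma$ forces the left-hand side to be non-negative, hence zero, and therefore $\phi(T\Sigma)\subset T\Sigma$ for every structure operator $\phi$; this is the complex-invariance condition in the $\mathbb{C}$-case and the quaternionic-invariance condition in the $\mathbb{H}$-case.

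The final step is a rigidity argument: for $\mathbb{F}=\mathbb{H}$, a submanifold whose tangent bundle is pointwise invariant under the quaternionic structure must be totally geodesic and thus a projective subspace $\mathbb{H}P^{l}$; for $\mathbb{F}=\mathbb{O}$ with $m=2$, the analogous invariance forces $\Sigma$ to be a Cayley projective line. The main obstacle is the summation/tracing step: organizing the many tangent--normal splittings produced by $\phi$ so that the curvature and second-fundamental-form contributions cancel to a single sign-definite expression. In the quaternionic and especially the octonionic cases, the structure operators are defined only locally, so some care is needed to check that the final sum is frame-independent and globally well-defined on $\Sigma$.
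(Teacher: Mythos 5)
You should first note that the paper does not actually reprove Theorem \ref{ohnita}: it is quoted from Lawson--Simons and Ohnita, and the surrounding text records the method, namely isometrically immersing $\mathbb{F}P^{m}$ into a Euclidean space $\mathbb{R}^{l}$ by the generalized Veronese imbedding and evaluating the index form on the normal projections $N_{v}$ of the constant parallel vector fields $v\in\mathbb{R}^{l}$, exactly as Simons does for Theorem \ref{t1.1}. Your proposal replaces this family by the sections $\nu_{\phi,i}=(\phi e_{i})^{\perp}$ built from the structure operators and a tangent frame. That is a genuinely different route, and it has a genuine gap in two places. First, the $\nu_{\phi,i}$ are not admissible test sections: they depend on a choice of local orthonormal frame $\{e_{i}\}$ of $T\Sigma$, which need not exist globally, and in the quaternionic and octonionic cases the operators $\phi$ themselves are only a canonical \emph{local} basis of the structure (cf.\ the discussion around Equation (\ref{hfubini})). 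Stability yields $Q(\nu,\nu)\geq 0$ only for globally defined normal sections; summing over $\phi$ and $i$ makes the \emph{integrand} frame-independent but does not turn the individual summands into second variations of actual deformations of $\Sigma$. The Simons/Ohnita averaging works precisely because the parallel fields of $\mathbb{R}^{l}$ form a fixed finite-dimensional space of globally defined sections.

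Second, the claimed collapse of the gradient and second-fundamental-form terms into a single sign-definite curvature expression is asserted, not derived, and it is the entire content of the proof. In the Veronese-embedding approach both $\nabla^{\perp}N_{v}$ and $A_{N_{v}}$ are controlled by the second fundamental form of the embedding because $v$ is parallel in $\mathbb{R}^{l}$, and the sum over an orthonormal basis of $\mathbb{R}^{l}$ telescopes into the identity recorded (in the product setting) as Equation (\ref{mainequation}); only then do the explicit formulas (\ref{fubini}), (\ref{hfubini}), (\ref{CCP}) produce a non-positive quantity whose vanishing gives the invariance of $T\Sigma$. For your fields the ambient vector $\phi e_{i}$ is not parallel, $\nabla^{\perp}(\phi e_{i})^{\perp}$ involves derivatives of the frame and of the tangent--normal splitting, and there is no identity forcing the $|\nabla^{\perp}\nu|^{2}$ and $|A_{\nu}|^{2}$ contributions to cancel against the curvature term with the sign you need. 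Until such an identity is exhibited, the displayed formula $\sum_{\phi,i}Q(\nu_{\phi,i},\nu_{\phi,i})=-c\int_{\Sigma}\sum_{\phi,i}|(\phi e_{i})^{\perp}|^{2}\,dV$ is an expectation rather than a proof. (Your final rigidity inputs --- that pointwise quaternionic invariance of $T\Sigma$ forces a totally geodesic $\mathbb{H}P^{l}$, and the analogous Cayley statement --- are correct as quoted facts and are also how the cited proofs conclude.)
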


The projective space $\mathbb{F}P^{m}$ can be isometrically immersed in some Euclidean space $\mathbb{R}^{l}$ by the generalized Veronese imbedding (Sakamoto \cite{sakamoto1977planar}). Therefore, in order to prove the previous theorem, the stability operator was evaluated on the projections of the constant parallel vector fields  $v\in \mathbb{R}^{l}$ in $N_{\Sigma}^{\mathbb{F}P^{m}}$, as Simons did in the proof of Theorem \ref{t1.1}. This was done because after running $v$ in an orthonormal basis of  $\mathbb{R}^{l}$, a term $T$ given by the sum of the second variations of the associated normal sections is non-positive. Since $T$ is also non-negative  (because the submanifold is stable),  $T$ must be zero. Thus, using that $T=0$, we can obtain geometric information about the submanifold. 

After classifying the CSMI of these well known Riemannian manifolds, a natural direction is to study CSMI in their products. Along these lines, Torralbo and Urbano proved a classification theorem of CSMI in the product of a sphere and any Riemannian manifold whenever the dimension of the sphere is at least three or the immersion has codimension $1$. 

 \begin{theorem} \cite{torralbo2014stable}
 \label{TU}
 Let $M$ be any Riemannian manifold and $\Phi=(\phi,\psi):\Sigma \rightarrow \mathbb{S}^{m}\times M$ be a compact minimal immersion of dimension $n$ in $ \mathbb{S}^{m}\times M$, $n\geq 2$, satisfying either $m\geq3$ or $m=2$ and $\Phi$ is of codimension $1$. Then, $\Phi$ is stable if and only if

 \begin{itemize}
     \item $\Sigma=S^{m}$ and $\Phi(\Sigma)$ is a slice $S^{m}\times \{q\}$ with $q$ a point in $M$.
     \item $\Sigma$ is a covering of $M$ and $\Phi(\Sigma)$ is a slice $\{p\}\times M$ with $p$ a point of $S^{m}$.
     \item $\psi:\Sigma \rightarrow M$ is a stable minimal submanifold and $\Phi(\Sigma)$ is $\{p\}\times \psi(\Sigma)$ with $p$ a point of $S^{m}$.
     
     \item $\Sigma=S^{m}\times \bar{\Sigma}$, $\Phi=(Id,\psi)$ and $\psi:\bar{\Sigma}\rightarrow M$ is a stable minimal submanifold. 
 \end{itemize}
 \end{theorem}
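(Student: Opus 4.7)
The plan is to adapt the Simons--Lawson--Ohnita test-section technique sketched in the discussion of Theorems~\ref{t1.1} and~\ref{ohnita} to the product setting $S^m\times M$. Since $S^m$ sits isometrically inside $\mathbb{R}^{m+1}$, for each $v\in\mathbb{R}^{m+1}$ I would introduce the vector field on $S^m\times M$ defined by $V(p,q)=(v-\langle v,p\rangle p,\,0)$, i.e.\ the tangential projection of $v$ onto $T_pS^m$, extended trivially in the $M$-direction. Along $\Phi$, decompose $V=V^{\top}+\xi_v$ with $\xi_v$ a section of $N^{S^m\times M}_\Sigma$. These $\xi_v$ are the natural candidates for test sections of the second variation quadratic form $Q$ associated with the Jacobi operator.

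The first key step is the summation trick: picking an orthonormal basis $v_1,\ldots,v_{m+1}$ of $\mathbb{R}^{m+1}$, form $T:=\sum_{i=1}^{m+1}Q(\xi_{v_i},\xi_{v_i})$. A direct calculation, using the Gauss and Codazzi equations, the product structure of the curvature of $S^m\times M$, minimality of $\Phi$, and the pointwise identity
\[
\sum_{i=1}^{m+1}(v_i-\langle v_i,p\rangle p)\otimes(v_i-\langle v_i,p\rangle p)=\mathrm{Id}_{T_pS^m},
\]
should produce an expression for $T$ that is manifestly non-positive, of the shape $T\le -c\int_\Sigma |d\phi|^2\,dV$ plus terms that cancel using minimality. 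Stability gives $T\ge0$, hence $T=0$, and since each summand is non-negative we conclude $Q(\xi_v,\xi_v)=0$ for every $v\in\mathbb{R}^{m+1}$.

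From the vanishing of every $Q(\xi_v,\xi_v)$ I would extract pointwise rigidity: the associated Jacobi fields are parallel in the normal bundle and annihilate the shape operator in a very restrictive way. This should lead to a dichotomy on $d\phi$: either $d\phi\equiv 0$ on a component of $\Sigma$, in which case $\Phi$ lands in a vertical slice $\{p\}\times M$, yielding cases~(2) and~(3) according to whether $\psi$ covers $M$; or $d\phi$ is surjective onto $TS^m$ at every point, forcing $\phi$ to be a Riemannian submersion with totally geodesic fibres onto $S^m$, which by the rigidity of $S^m$ produces cases~(1) and~(4). Compactness of $\Sigma$ and a connectedness argument then upgrade the pointwise picture to the global classification.

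The main obstacle is the curvature computation at the summation step: the product structure entangles the constant sectional curvature of $S^m$ with the arbitrary curvature of $M$, and one must verify that every $M$-curvature term drops out of $T$ after summation over $v$. This cancellation is exactly what allows the conclusion to hold for \emph{any} Riemannian manifold $M$. The hypothesis $m\ge 3$ or codimension~$1$ enters precisely here: it ensures that the family $\{\xi_v\}$ spans enough of $N^{S^m\times M}_\Sigma$ for the pointwise rigidity of step three to force one of the four conclusions, whereas when $m=2$ and the codimension is at least $2$ there are normal directions unreached by the construction, leaving room for stable immersions outside the stated list.
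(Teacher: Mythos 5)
First, a framing point: the paper does not prove Theorem \ref{TU} at all; it is quoted from \cite{torralbo2014stable}, and the paper only records the method (evaluate the stability operator on the normal projections of the constant fields $(v,0)$, $v\in\mathbb{R}^{m+1}$) and carries out the analogous computation for projective spaces in Lemma \ref{GENERALC} and Theorem \ref{cod1c}. Your test sections $\xi_v$ are in fact the same ones: projecting $(v,0)$ onto $N_p\Sigma\subset T(S^m\times M)$ automatically discards the component of $v$ normal to $S^m$ in $\mathbb{R}^{m+1}$, so pre-projecting onto $T_pS^m$ changes nothing. The strategy is therefore the right one, but there is a concrete error at the central step: the summed second variation $T=\sum_i Q(\xi_{v_i},\xi_{v_i})$ cannot have the shape $T\le -c\int_\Sigma|d\phi|^2\,dV$. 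If it did, stability would force $d\phi\equiv 0$, so every stable immersion would lie in a slice $\{p\}\times M$ --- contradicting cases (1) and (4) of the theorem itself, where $\phi$ is a submersion onto $S^m$ (the slice $S^m\times\{q\}$ is stable and there $\phi$ is the identity). What the summation actually produces is a non-positive pointwise expression quadratic in the $T S^m$-projections of an orthonormal basis of $N_p\Sigma$ and of $T_p\Sigma$; compare Equation (\ref{MAIN}), where in the complex case it is $\lambda^{2}\bigl(\sum_{k,l}\langle J(\eta^{1}_{k}),\eta^{1}_{l}\rangle^{2}-\langle \eta^{1}_{k},\eta^{1}_{l}\rangle^{2}\bigr)$, with no factor of $|d\phi|^2$ in sight. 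Its vanishing yields the pointwise alternative: either all normal projections $\eta^1_k$ vanish, so $T_pS^m\subset d\Phi_p(T_p\Sigma)$ and one is led to cases (1) and (4) through the Riemannian-submersion and Hermann--O'Neill machinery (the analogue of Lemma \ref{lemmafact}); or all tangent projections $e^1_j$ vanish, so $\Phi$ lies in a slice, giving cases (2) and (3). Your dichotomy on $d\phi$ is the correct endpoint, but it is not reachable from the inequality you wrote down.

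Two further points. The role of the hypothesis ``$m\ge 3$ or codimension $1$'' is not that the family $\{\xi_v\}$ fails to span enough of the normal bundle when $m=2$; the test sections are the same in all cases. The hypothesis is what makes the summed expression non-positive with equality only in the rigid configurations above; for $m=2$ and codimension $\ge 2$ the expression can also vanish on configurations compatible with a complex structure of $S^2\times S^2$, which is precisely why Theorem \ref{TU2} contains the third, genuinely new, case. Finally, ``the associated Jacobi fields are parallel in the normal bundle and annihilate the shape operator'' is not what $Q(\xi_v,\xi_v)=0$ gives; one obtains pointwise algebraic identities on the projections, and the remaining global work --- total geodesy of the two induced distributions, integrability of the horizontal distribution, and the verification that the induced map $\psi$ is a minimal and stable immersion --- is a substantial portion of the proof that your sketch compresses into ``a connectedness argument''.
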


 Moreover, they complete the classification of CSMI in the product of two spheres. 
 
 \begin{theorem}  \cite{torralbo2014stable}
 \label{TU2}
 Let $\Phi=(\phi,\psi):\Sigma \rightarrow S^{n_{1}}(r_{1})\times S^{n_{2}}(r_{2})$ be a compact minimal immersion of dimension $n$ in $S^{n_{1}}(r_{1})\times S^{n_{2}}(r_{2})$, $n\geq 2$. Then, $\Phi$ is stable if and only if one of the following possibilities occurs
 
 \begin{itemize}
     \item $\Sigma=S^{n_{1}}(r_{1})$ and $\Phi(\Sigma)$ is a slice $S^{n_{1}}(r_{1})\times \{q\}$ with $q$ a point of $S^{n_{2}}(r_{2})$.
     \item $\Sigma=S^{n_{2}}(r_{2})$ and $\Phi(\Sigma)$ is a slice $ \{p\} \times S^{n_{2}}(r_{2})$ with $p$ a point of $S^{n_{1}}(r_{1})$.
     \item $n_{1}=n_{2}=n=2$, $\Sigma$ is orientable and $\Phi$ is a complex immersion of the Riemann surface $\Sigma$ in $S^{2}(r_{1})\times S^{2}(r_{2})$ with respect to one of the two complex structures that $S^{2}(r_{1})\times S^{2}(r_{2})$ has. 
 \end{itemize}
 \end{theorem}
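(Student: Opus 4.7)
The plan is to reduce most configurations to Theorem~\ref{TU} and treat the remaining case by exploiting the two K\"ahler structures carried by $S^2(r_1)\times S^2(r_2)$.

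For the reduction, I would view the ambient as $\mathbb{S}^m\times M$ with $\mathbb{S}^m=S^{n_i}(r_i)$ and $M$ the other factor. Theorem~\ref{TU} then applies whenever $\max(n_1,n_2)\geq 3$, or whenever $n_1=n_2=2$ and $\Phi$ has codimension one. In these cases the four alternatives of Theorem~\ref{TU} collapse to the two slice possibilities in the statement: any factor map $\psi:\Sigma\to S^{n_j}(r_j)$ appearing there as a stable minimal immersion must, by Simons' Theorem~\ref{t1.1}, have codimension zero, i.e.\ either be constant or cover the whole sphere, and the latter falls back onto a slice or product already listed. The only configuration not reached this way is $n_1=n_2=2$ with $n=2$, namely a minimal surface of codimension two in $S^2(r_1)\times S^2(r_2)$, which is exactly the new situation in the last item.

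For this remaining case, $S^2\times S^2$ carries the two orthogonal parallel complex structures $J_\pm=J_1\oplus(\pm J_2)$, each turning the product metric into a K\"ahler metric. The ``if'' direction is immediate: a $J_\pm$-holomorphic curve is calibrated by the corresponding K\"ahler form and hence minimizes area in its homology class, so it is stable. For the converse I would adapt the Lawson--Simons test-section technique to the product: embed each factor isometrically as $S^2(r_i)\hookrightarrow\mathbb{R}^3$, and for every $v=(v_1,v_2)\in\mathbb{R}^3\oplus\mathbb{R}^3$ take $X_v$ to be the projection onto $N_{\Sigma}^{S^2\times S^2}$ of the tangential lift of $v$. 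Evaluating the second variation on each $X_v$ and summing, with weights calibrated to $r_1$ and $r_2$, over an orthonormal basis of $\mathbb{R}^3\oplus\mathbb{R}^3$ should produce an integral identity whose pointwise integrand can be reorganised in terms of the K\"ahler angles $\theta_\pm$ of $\Sigma$ relative to $J_\pm$.

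The main obstacle is precisely this reorganisation: one must compute the summed Jacobi form using the second fundamental form of $S^2(r_i)\subset\mathbb{R}^3$ and the curvature tensor of the product, and then identify the result with a nonpositive multiple of $\int_\Sigma\sin^2\theta_+\sin^2\theta_-$. Because $r_1$ and $r_2$ may differ, the ambient is K\"ahler but not Einstein, and a uniform sum will scramble the curvature contributions of the two factors; a weighting adapted to the sectional curvatures $1/r_i^2$ is needed before the cross-terms collapse into the K\"ahler-angle expression. Once that identification is secured, stability forces the integrand to vanish pointwise, so $\sin\theta_+\equiv 0$ or $\sin\theta_-\equiv 0$; that is, $\Sigma$ is complex with respect to one of $J_\pm$, and in particular orientable, completing the only-if direction and the classification.
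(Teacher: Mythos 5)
First, a point of context: Theorem \ref{TU2} is quoted from Torralbo and Urbano \cite{torralbo2014stable}; the present paper states it only as background and contains no proof of it, so there is no internal argument to compare yours against. Judged on its own terms, your reduction is essentially sound: whenever $\max(n_1,n_2)\geq 3$, or $n_1=n_2=2$ with codimension one, Theorem \ref{TU} applies, and Simons' Theorem \ref{t1.1} forces every stable minimal factor map into a sphere to have codimension zero, collapsing the four alternatives to the two slices (modulo explicitly discarding the degenerate codimension-zero covering of the whole product, which you wave at). The calibration argument for the ``if'' direction of the remaining case is standard and correct.

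Two genuine gaps remain in the ``only if'' direction for $n_1=n_2=n=2$. The first you acknowledge yourself: the entire substance of this case is the computation showing that the (weighted) sum of second variations over the test sections equals a nonpositive multiple of $\int_\Sigma \sin^2\theta_+\sin^2\theta_-$, and you do not carry it out; in particular the claim that a $1/r_i^2$-weighting makes the cross terms collapse when $r_1\neq r_2$ is exactly what must be verified, so at present this is a plan rather than an argument. The second gap you do not flag: even granting that stability forces $\sin\theta_+\sin\theta_-\equiv 0$, this only says that at each point the tangent plane is invariant under $J_+$ or under $J_-$; it does not yield the global dichotomy ``$\sin\theta_+\equiv 0$ on all of $\Sigma$ or $\sin\theta_-\equiv 0$ on all of $\Sigma$.'' The two closed sets $\{\sin\theta_+=0\}$ and $\{\sin\theta_-=0\}$ could a priori both be proper subsets covering $\Sigma$, and excluding this requires a further argument (unique continuation, real-analyticity, or an elliptic equation satisfied by the K\"ahler angle). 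This is precisely the difficulty that occupies Parts I and II of the proof of Theorem \ref{z2} in this paper, where Lemma \ref{realanal} and a real-analyticity argument are needed to show the complex structure cannot switch from $J_1$ to $J_2$ along $\Sigma$. Your sketch needs the analogous step before the classification is complete.
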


To prove the last two theorems, the stability operator was evaluated on the projections of the vector fields $(v,0)$ to the normal part of the immersion in the Riemannian product, where $v$ are the constant parallel vector fields used in the proof of Theorem \ref{t1.1}.

In \cite{chen2013stable}, Chen and Wang generalize Theorem \ref{TU} to the product of any hypersurface  $M$ of the Euclidean space $\mathbb{R}^{m_{1}+1}$ with certain conditions, and any Riemannian manifold. Similar to Torralbo and Urbano, the stability operator was evaluated on projections of the vector fields $(v,0)$ to the normal part of the immersion in the Riemannian product, where $v\in \mathbb{R}^{m_{1}+1}$ are the constant parallel vector fields in the Euclidean space.

 \begin{theorem}\cite{chen2013stable}
 \label{chen}
 Let $\Phi=(\phi,\psi):\Sigma \rightarrow \bar{M}:=M_{1}\times M_{2}$ be a compact minimal immersion in $\bar{M}$, where $M_{1}$ is a compact connected hypersurface in $\mathbb{R}^{m_{1}+1}$ and $M_{2}$ is any Riemannian manifold. Assume that the sectional curvature $K_{M_{1}}$ of $
M_{1}$ satisfies
\begin{center}
    $\frac{1}{\sqrt{m_{1}-1}}\leq K_{M_{1}} \leq 1$.
\end{center}
 Then, $\Phi$ is stable if and only if
 \begin{itemize}
     \item $\Sigma=M_{1}$ and $\Phi(\Sigma)$ is a slice $M_{1}\times \{q\}$ with $q$ a point in $M_{2}$.
     \item $\Sigma$ is a covering of $M_{2}$ and $\Phi(\Sigma)$ is a slice $\{p\}\times M_{2}$ with $p$ a point of $M_{1}$.
     \item $\psi:\Sigma \rightarrow M_{2}$ is a stable minimal submanifold and $\Phi(\Sigma)$ is $\{p\}\times \psi(\Sigma)$ with $p$ a point of $M_{1}$.
     \item $\Sigma=M_{1}\times \bar{\Sigma}$, $\Phi=(Id,\psi)$ and $\psi:\bar{\Sigma}\rightarrow M_{2}$ is a stable minimal submanifold. 
 \end{itemize}
 \end{theorem}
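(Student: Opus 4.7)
The plan is to adapt the Simons--Lawson--Simons test-section technique, in the form used by Torralbo--Urbano in \cite{torralbo2014stable}, to this product setting. Since $M_1$ sits isometrically in $\mathbb{R}^{m_1+1}$ as a hypersurface, every constant parallel vector field $v\in\mathbb{R}^{m_1+1}$ produces a vector field $v^{T}$ on $M_1$ by tangential projection. Lifting to $\bar M = M_1\times M_2$ as $X_v=(v^{T},0)$ and then taking the normal component along $\Phi$ gives a canonical family of normal sections $w_v = X_v^{\perp}$ on $\Sigma$; these are the test sections I would plug into the index form.

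First I would compute $I(w_v,w_v)$ using the Jacobi operator $J w = -\Delta^{\perp} w - \widetilde{A}(w) - \widetilde{R}(w)$, where $\widetilde{A}$ is the Simons term built from the second fundamental form of $\Sigma$ in $\bar M$ and $\widetilde{R}$ encodes the curvature of $\bar M$. Then I would sum over an orthonormal basis $\{v_\alpha\}_{\alpha=1}^{m_1+1}$ of $\mathbb{R}^{m_1+1}$. Using $\sum_\alpha v_\alpha \otimes v_\alpha = \mathrm{Id}$ the cross terms telescope and the total collapses to an integral over $\Sigma$ of a pointwise quantity $T = \sum_\alpha I(w_{v_\alpha},w_{v_\alpha})$ built from the shape operator of $M_1 \subset \mathbb{R}^{m_1+1}$, the second fundamental form of $\Sigma\subset\bar M$, and the differential $d\phi$. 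This skeleton is the same as in \cite{torralbo2014stable} and \cite{chen2013stable}; the new ingredient is that the Gauss equation of $M_1$ in $\mathbb{R}^{m_1+1}$ must be carried along explicitly, because $M_1$ need not be of constant curvature.

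The central step is to show $T\leq 0$. Through the Gauss equation, the ambient curvature contribution evaluated on $w_{v_\alpha}$ becomes a quadratic expression in the eigenvalues $\lambda_1,\ldots,\lambda_{m_1}$ of the shape operator of $M_1$ in $\mathbb{R}^{m_1+1}$, and the pinching $\tfrac{1}{\sqrt{m_1-1}}\leq K_{M_1}\leq 1$ is precisely the condition that lets one dominate this expression by the Simons $|A|^{2}$-term with the correct sign. Combined with stability, which forces $I(w_{v_\alpha},w_{v_\alpha})\geq 0$ for every $\alpha$, this gives $T=0$ and hence pointwise equality in every estimate used. Unwinding these equalities shows that at each point of $\Sigma$ either $d\phi$ has full rank onto $T_{\phi}M_1$ with the second fundamental form of $\Sigma$ matching that of $M_1$, or $d\phi$ vanishes; a connectedness argument, together with the minimality equations for $\Phi$, then distributes these pointwise alternatives into the four global configurations listed in the statement. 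The converse implications in each case are immediate, since slices of one factor are totally geodesic in $\bar M$ and the product case reduces to the stability of $\psi:\bar\Sigma\to M_2$.

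\emph{The main obstacle} is the pointwise sign analysis of $T$: balancing the Gauss-equation-driven ambient curvature against the Simons term requires a sharp inequality on the eigenvalues of the shape operator of $M_1$, and it is exactly this inequality that demands the lower pinching constant $1/\sqrt{m_1-1}$ rather than a weaker bound. A secondary difficulty is turning the pointwise equality $T=0$ into the global product/slice dichotomy, which requires careful bookkeeping of the horizontal and vertical tangent directions of $\Sigma$ with respect to $\bar M = M_1\times M_2$.
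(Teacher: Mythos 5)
This theorem is stated in the paper as a quoted result of Chen and Wang \cite{chen2013stable}; the paper gives no proof of its own, saying only that the method evaluates the stability operator on the projections of the fields $(v,0)$, $v\in\mathbb{R}^{m_{1}+1}$ constant, onto the normal bundle of the immersion. Your plan follows exactly that strategy (summing the index form over an orthonormal basis of $\mathbb{R}^{m_{1}+1}$, using the Gauss equation and the curvature pinching to force the resulting quantity to be nonpositive, and then extracting the four cases from the equality case), which is the same approach the paper itself adapts in Lemma \ref{GENERALC} and Theorems \ref{cod1c} and \ref{dim1c} for the projective-space setting, so it is consistent with the cited argument.
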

 
This motivates us to evaluate the stability operator on the projections of the vector fields $(v,0)$ to the normal part of an immersion in the Riemannian product of a complex or quaternionic projective  space with any other Riemannian manifold, where $v$ are the constant parallel vector fields used in the Theorem \ref{ohnita}. Notice that we can not use Theorem \ref{chen} because the complex and quaternionic projective spaces are not hypersurfaces of Euclidean space (see \cite{MR232396, sanderson1963non}). For $\mathbb{C}P^{\frac{m_{1}}{2}}\times M$, we have the following:

\begin{theorem}
\label{COMPLEXTHEOREM}
Let $\Phi=(\psi,\phi):\Sigma \rightarrow \mathbb{C}P^{\frac{m_{1}}{2}}\times M$ be a compact stable minimal immersion of codimension $d$ and dimension $n$, where $M$ is any Riemannian manifold of dimension $m_{2}$. Then, 
\begin{itemize}
    \item If $d=1$, $\Sigma=\mathbb{C}P^{\frac{m_{1}}{2}}\times \hat{\Sigma}$, $\Phi=Id \times \hat{\phi}$ where $\hat{\phi}:\hat{\Sigma}\rightarrow M$ is a stable minimal immersion of codimension $1$, and therefore $\Phi(\Sigma)=\mathbb{C}P^{\frac{m_{1}}{2}}\times \hat{\phi}(\hat{\Sigma})$. In particular, for $m_{2}=1$, $\Sigma=\mathbb{C}P^{\frac{m_{1}}{2}}$, $\hat{\phi}$ is a constant function, and $\Phi(\Sigma)=\mathbb{C}P^{\frac{m_{1}}{2}}\times \{q\}$, for $q\in M$. 
    \item If n=1, $\phi:\Sigma \rightarrow M$ is a stable geodesic, $\psi$ is a constant function, and therefore $\Phi(\Sigma)=\{r\}\times \phi(\Sigma)$ for $r\in \mathbb{C}P^{\frac{m_{1}}{2}}$.
\end{itemize}
\end{theorem}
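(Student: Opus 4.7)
The plan is to adapt the Lawson-Simons test-section method that proved Theorem~\ref{ohnita} to the Riemannian product, in the spirit of Torralbo and Urbano's argument for Theorem~\ref{TU}. First, embed $\mathbb{C}P^{m_{1}/2}$ isometrically in some $\mathbb{R}^{l}$ via the first standard (Veronese) imbedding. For each $v\in\mathbb{R}^{l}$ consider the constant vector field $(v,0)$ on $\mathbb{R}^{l}\times M$. Along $\Phi(\Sigma)$ it decomposes orthogonally as a component tangent to $\Sigma$, a component $\xi_{v}$ that is normal to $\Sigma$ but tangent to $\mathbb{C}P^{m_{1}/2}\times M$, and a component normal to $\mathbb{C}P^{m_{1}/2}\times M$ in $\mathbb{R}^{l}\times M$. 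The middle piece $\xi_{v}$ is the natural candidate for a test section in the index form $Q$.

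The next step is to pick an orthonormal basis $\{v_{A}\}$ of $\mathbb{R}^{l}$ and compute $\sum_{A}Q(\xi_{v_{A}},\xi_{v_{A}})$. Because the second slot of $(v_{A},0)$ is zero, all $M$-contributions cancel after summing, and the calculation collapses to the Lawson-Simons identity for $\mathbb{C}P^{m_{1}/2}$ applied to the $T\mathbb{C}P^{m_{1}/2}$-projection of a frame of $\Sigma$. Up to a positive constant independent of $\Phi$ one expects
\begin{equation*}
\sum_{A}Q(\xi_{v_{A}},\xi_{v_{A}}) \;=\; -\int_{\Sigma}\sum_{i=1}^{n}\bigl\|(J\pi_{1}(e_{i}),0)^{\perp}\bigr\|^{2}\,dV,
\end{equation*}
where $\pi_{1}$ is the projection to $T\mathbb{C}P^{m_{1}/2}$ in the product, $J$ is the complex structure, $\{e_{i}\}$ is a local orthonormal frame of $T\Sigma$, and $(\cdot)^{\perp}$ denotes the component in $N_{\Sigma}^{\mathbb{C}P^{m_{1}/2}\times M}$. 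Stability of $\Phi$ forces the left side to be non-negative while the right side is manifestly non-positive, so the integrand vanishes pointwise and one obtains the algebraic identity
\begin{equation*}
(J\pi_{1}(X),0)\in T_{x}\Sigma\qquad\text{for every }X\in T_{x}\Sigma.
\end{equation*}

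Applying this identity twice with $J^{2}=-\mathrm{Id}$ yields $(\pi_{1}(X),0)\in T_{x}\Sigma$, and subtracting shows that $(0,\pi_{2}(X))\in T_{x}\Sigma$ as well. Therefore $T_{x}\Sigma=W_{x}\oplus U_{x}$ with $W_{x}\subseteq T_{\psi(x)}\mathbb{C}P^{m_{1}/2}$ a $J$-invariant (hence even-dimensional) subspace and $U_{x}\subseteq T_{\phi(x)}M$. If $d=1$ then $\dim W_{x}+\dim U_{x}=m_{1}+m_{2}-1$ with $\dim W_{x}\le m_{1}$ and even, which forces $\dim W_{x}=m_{1}$ at every point; hence the horizontal distribution $T\mathbb{C}P^{m_{1}/2}\oplus 0$ is tangent to $\Sigma$ everywhere, and integrating this involutive distribution gives $\Sigma=\mathbb{C}P^{m_{1}/2}\times\hat\Sigma$ with $\Phi=\mathrm{Id}\times\hat\phi$. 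Stability of $\hat\phi$ follows because any normal variation of $\hat\phi$ lifts to a variation of $\Phi$ whose second variation equals $\mathrm{vol}(\mathbb{C}P^{m_{1}/2})$ times that of $\hat\phi$. If $n=1$ then $\dim W_{x}$ is even and at most $1$, so $W_{x}=0$; thus $d\psi\equiv 0$, so $\psi$ is constant and $\phi$ is a stable one-dimensional minimal immersion, i.e.\ a stable geodesic.

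The main obstacle is the identity in the second paragraph: one must expand the second variation using the Gauss and Weingarten equations for both $\Sigma\subset\mathbb{C}P^{m_{1}/2}\times M$ and $\mathbb{C}P^{m_{1}/2}\subset\mathbb{R}^{l}$, and check that after summing over $\{v_{A}\}$ the curvature contributions from $M$ and all $\mathbb{C}P$--$M$ cross terms drop out, leaving only the Lawson-Simons integrand evaluated on the $\mathbb{C}P$-part of $T\Sigma$ but with the normal projection taken in the product. Once this identity is in place, the remaining steps are linear algebra and a standard foliation argument, and the case $n=1$ follows immediately from the same pointwise condition.
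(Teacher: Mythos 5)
Your overall strategy---testing the index form on the normal projections of the constant fields $(v,0)$ coming from the Veronese imbedding---is exactly the paper's, but the key identity in your second paragraph is wrong, and the argument breaks there. Summing the second variations over an orthonormal basis of $\mathbb{R}^{l}$ does not produce the manifestly non-positive quantity $-\int_{\Sigma}\sum_{i}\|(J\pi_{1}(e_{i}),0)^{\perp}\|^{2}$. The correct collapsed form (Lemma \ref{GENERALC}, Equation (\ref{L0})) is
\begin{equation*}
\sum_{A}Q(\xi_{v_{A}},\xi_{v_{A}})=\lambda^{2}\int_{\Sigma}\sum_{j=1}^{n}\Bigl(\bigl\|[(e_{j}^{1},0)]^{N}\bigr\|^{2}-\bigl\|[(J(e_{j}^{1}),0)]^{N}\bigr\|^{2}\Bigr)\,d\Sigma,
\end{equation*}
and the extra non-negative term does not vanish in the product: $\langle e_{j}^{1},\eta_{k}^{1}\rangle=-\langle e_{j}^{2},\eta_{k}^{2}\rangle$ need not be zero, unlike in $\mathbb{C}P^{\frac{m_{1}}{2}}$ alone where $\langle e_{j},\eta_{k}\rangle=0$ kills it. So stability does not force your pointwise condition $(J\pi_{1}(X),0)\in T_{x}\Sigma$. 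Indeed that condition is false in general for compact stable minimal submanifolds of the product: a $J_{2}$-complex curve in $\mathbb{C}P^{1}\times\mathbb{C}P^{1}$ (the graph of an anti-holomorphic isometry) is stable and minimal, its tangent space is spanned by $(x,y)$ and $(Jx,-Jy)$ with $y\neq 0$, and $(Jx,0)$ does not lie in that span. Since your identity, if true, would apply in every dimension and codimension, this is a genuine counterexample to it. The paper's way around the indefinite sign is to re-express the sum purely in terms of the normal projections (Equation (\ref{MAIN})), which for $d=1$ gives $-\lambda^{2}|\eta^{1}|^{4}\le 0$ and hence $\eta^{1}=0$, or purely in terms of the tangent projections (Equation (\ref{MAIN2})), which for $n=1$ gives $-\lambda^{2}|e^{1}|^{4}\le 0$ and hence $e^{1}=0$; only in these low codimension/dimension cases is the sign definite.

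A secondary point: in the $d=1$ case, once $T_{\psi(p)}\mathbb{C}P^{\frac{m_{1}}{2}}\times\{0\}\subset d\Phi_{p}(T_{p}\Sigma)$ everywhere, ``integrating the involutive distribution'' only yields a local product. The global splitting $\Sigma=\mathbb{C}P^{\frac{m_{1}}{2}}\times\hat{\Sigma}$ with $\Phi=\mathrm{Id}\times\hat{\phi}$ requires more: the paper shows that $\psi$ is an onto Riemannian submersion with totally geodesic fibers and integrable horizontal distribution and then invokes Lemma \ref{lemmafact}, which uses the simple connectedness of $\mathbb{C}P^{\frac{m_{1}}{2}}$ to show the holonomy group of the submersion is trivial. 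Your linear-algebra step forcing $\dim W_{x}=m_{1}$ (evenness of a $J$-invariant subspace against the odd bound $m_{1}-1$) and your transfer of stability to $\hat{\phi}$ are fine, and the $n=1$ case would go through essentially as you describe once the sign identity is repaired.
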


\begin{remark} We have the following:
\begin{itemize}
    \item For the particular case $m=2$ in Theorem \ref{TU}, Torralbo and Urbano classified the CSMI of codimension $d=1$ in $S^{2}\times M$. Applying Theorem \ref{COMPLEXTHEOREM} for $m_{1}=2$ and using the fact that $S^{2}$ is isometric to $\mathbb{C}P^{1}$, we obtain the same classification result.
    \item Theorem \ref{COMPLEXTHEOREM} gives a complete the classification of CSMI in $\mathbb{C}P^{1}\times M$, where $M$ is a Riemannian manifold of dimension $1$. 
\end{itemize}
\end{remark}

The last theorem tells us that the CSMI of either codimension $1$ or dimension $1$ in the Riemannian product  $\mathbb{C}P^{\frac{m_{1}}{2}}\times M$ are products of trivial CSMI of  $\mathbb{C}P^{\frac{m_{1}}{2}}$  with CSMI of  $M$. As an aplication, 
since we know the stable minimal submanifolds of $M=S^{s}, \mathbb{R}P^{s}, \mathbb{C}P^{s}, \mathbb{H}P^{s}, \mathbb{O}P^{2}$, we have the following corollaries:

\begin{corollary} 
\label{A2}
There are no compact stable minimal immersions of
\begin{itemize}
    \item codimension $d=1$ in the product manifold  $\mathbb{C}P^{\frac{m_{1}}{2}}\times S^{s}$,  $\mathbb{C}P^{\frac{m_{1}}{2}}\times \mathbb{O} P^{2}$, or  $\mathbb{C}P^{\frac{m_{1}}{2}}\times \mathbb{K}P^{s}$ other than  $\mathbb{C}P^{\frac{m_{1}}{2}}\times \{q\}$ in $\mathbb{C}P^{\frac{m_{1}}{2}}\times S^{1}$, for $q\in S^{1}$,
    \item or dimension $n=1$ in the product manifold  $\mathbb{C}P^{\frac{m_{1}}{2}}\times S^{s}$,  $\mathbb{C}P^{\frac{m_{1}}{2}}\times \mathbb{O} P^{2}$, or  $\mathbb{C}P^{\frac{m_{1}}{2}}\times \mathbb{K}P^{s}$ other than  $\{r\}\times S^{1}$
in $\mathbb{C}P^{\frac{m_{1}}{2}}\times S^{1}$, for $r\in \mathbb{C}P^{\frac{m_{1}}{2}}$,
\end{itemize}
where $\mathbb{K}\in \{\mathbb{C}, \mathbb{H}\}$. 
\end{corollary}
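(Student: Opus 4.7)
The plan is to deduce Corollary \ref{A2} directly from Theorem \ref{COMPLEXTHEOREM} by identifying, in each allowed choice of $M$, the compact stable minimal immersions of codimension $1$ or dimension $1$ through the existing classifications. By Theorem \ref{COMPLEXTHEOREM}, if $\Phi=(\psi,\phi):\Sigma\to\mathbb{C}P^{\frac{m_{1}}{2}}\times M$ is a CSMI of codimension $1$, then $\Phi=Id\times\hat{\phi}$ with $\hat{\phi}:\hat{\Sigma}\to M$ itself a CSMI of codimension $1$; if instead $\Phi$ has dimension $1$, then $\psi$ is constant and $\phi:\Sigma\to M$ is a stable geodesic. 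Thus Corollary \ref{A2} reduces to ruling out codimension-$1$ CSMI and stable geodesics inside $M\in\{S^{s},\mathbb{O}P^{2},\mathbb{C}P^{s},\mathbb{H}P^{s}\}$, except in the degenerate situation $M=S^{1}$.

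For codimension $1$, I will invoke Simons's bound (Theorem \ref{t1.1}): in $S^{s}$ every compact minimal submanifold of codimension $d\geq 1$ has Morse index at least $d\geq 1$, so it is not stable whenever $s\geq 2$. For $M=\mathbb{O}P^{2}$, Theorem \ref{ohnita} leaves only the Cayley projective line $\mathbb{O}P^{1}$, of real codimension $8\neq 1$. For $M=\mathbb{C}P^{s}$, Theorem \ref{ohnita} forces $\hat{\Sigma}$ to be complex of real dimension $2l$, so the codimension $2(s-l)$ is even and never $1$. For $M=\mathbb{H}P^{s}$, $\hat{\Sigma}$ must be a quaternionic projective subspace of real dimension $4l$, with codimension $4(s-l)$, a multiple of $4$. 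The only surviving case is $M=S^{1}$, where codimension $1$ forces $\hat{\Sigma}$ to be $0$-dimensional, recovering precisely the stated exception $\mathbb{C}P^{\frac{m_{1}}{2}}\times\{q\}$.

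For dimension $1$ the analysis is parallel: I will look for stable geodesics in $M$. In $S^{s}$ with $s\geq 2$ every closed geodesic is a great circle of codimension $s-1\geq 1$, which by Theorem \ref{t1.1} has index at least $s-1\geq 1$ and hence is unstable. In $\mathbb{C}P^{s}$, $\mathbb{H}P^{s}$, and $\mathbb{O}P^{2}$, Theorem \ref{ohnita} tells me that every CSMI has real dimension even, a multiple of $4$, or exactly $8$, respectively; in particular, no one-dimensional CSMI exists. Only $M=S^{1}$ remains, where any covering $S^{1}\to S^{1}$ is trivially a stable geodesic (the normal bundle is zero-dimensional), producing the exceptional image $\{r\}\times S^{1}$.

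No substantive obstacle is anticipated: once Theorem \ref{COMPLEXTHEOREM} is in hand, Corollary \ref{A2} reduces to a dimensional bookkeeping against the classifications recalled in Theorems \ref{t1.1} and \ref{ohnita}; the only care required is to isolate $M=S^{1}$ as the single degenerate case, where the codimension-$1$ or dimension-$1$ constraint collapses to a trivial submanifold and so yields the two exceptional slices appearing in the statement.
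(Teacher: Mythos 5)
Your proposal is correct and follows essentially the same route as the paper: the paper likewise deduces the corollary by combining Theorem \ref{COMPLEXTHEOREM} (via Theorems \ref{cod1c} and \ref{dim1c}) with the classifications in Theorems \ref{t1.1} and \ref{ohnita}, noting that the only stable codimension-$1$ submanifold (a point) or stable geodesic among the listed targets occurs in $S^{1}$. The dimension-parity bookkeeping you spell out for $\mathbb{C}P^{s}$, $\mathbb{H}P^{s}$, and $\mathbb{O}P^{2}$ is exactly what the paper leaves implicit.
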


\begin{remark}
The particular case in the previous corollary of CSMI of codimension $d=1$ in  $\mathbb{C}P^{\frac{m_{1}}{2}}\times S^{s}$ for $s\geq 2$, can be also obtained as a consequence of Theorem \ref{TU} from  Torralbo and Urbano by setting $M=\mathbb{C}P^{\frac{m_{1}}{2}}$ and applying Theorem \ref{ohnita}.
\end{remark}

\begin{corollary}
\label{A3}
The only compact stable minimal immersion of 
\begin{itemize}
    \item codimension $d=1$ in the product space $\mathbb{C}P^{\frac{m_{1}}{2}}\times \mathbb{R}P^{s}$ is $\mathbb{C}P^{\frac{m_{1}}{2}}\times \mathbb{R}P^{s-1}$,
    \item or  dimension $n=1$ in the product space $\mathbb{C}P^{\frac{m_{1}}{2}}\times \mathbb{R}P^{s}$ is
$\{r\}\times \mathbb{R}P^{1}$, $r\in \mathbb{C}P^{\frac{m_{1}}{2}}$.
\end{itemize}
\end{corollary}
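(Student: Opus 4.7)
The plan is to apply Theorem \ref{COMPLEXTHEOREM} with $M=\mathbb{R}P^{s}$ and then feed the output into the classification of stable minimal submanifolds of real projective space supplied by Theorem \ref{ohnita0}. The corollary just asks for a compatibility check between the two classifications, so the argument should be very short: no new stability computation is needed because both pieces are already in hand.

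For the codimension $d=1$ case, let $\Phi=(\psi,\phi):\Sigma\to \mathbb{C}P^{\frac{m_{1}}{2}}\times \mathbb{R}P^{s}$ be a CSMI of codimension one. Theorem \ref{COMPLEXTHEOREM} forces $\Sigma=\mathbb{C}P^{\frac{m_{1}}{2}}\times \hat\Sigma$, $\Phi=\mathrm{Id}\times\hat\phi$, where $\hat\phi:\hat\Sigma\to \mathbb{R}P^{s}$ is itself a stable minimal immersion of codimension one. By Theorem \ref{ohnita0} the image $\hat\phi(\hat\Sigma)$ must be a real projective subspace $\mathbb{R}P^{k}\subset \mathbb{R}P^{s}$; the codimension-one condition pins down $k=s-1$, so $\Phi(\Sigma)=\mathbb{C}P^{\frac{m_{1}}{2}}\times \mathbb{R}P^{s-1}$, as claimed.

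For the dimension $n=1$ case, Theorem \ref{COMPLEXTHEOREM} gives that $\psi$ is constant and $\phi:\Sigma\to\mathbb{R}P^{s}$ is a stable closed geodesic, with $\Phi(\Sigma)=\{r\}\times\phi(\Sigma)$. Invoking Theorem \ref{ohnita0} once more, the only one-dimensional stable minimal submanifold of $\mathbb{R}P^{s}$ is the real projective line $\mathbb{R}P^{1}$, so $\Phi(\Sigma)=\{r\}\times \mathbb{R}P^{1}$. Conversely, both candidate submanifolds $\mathbb{C}P^{\frac{m_{1}}{2}}\times \mathbb{R}P^{s-1}$ and $\{r\}\times \mathbb{R}P^{1}$ are products of stable factors, hence stable in the Riemannian product, which closes the equivalence.

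The only place where I would slow down is the appeal to Theorem \ref{ohnita0} in the codimension-one setting: one has to be sure that the codimension of $\hat\phi$ inside $\mathbb{R}P^{s}$ coincides with the codimension $d=1$ of $\Phi$ inside $\mathbb{C}P^{\frac{m_{1}}{2}}\times \mathbb{R}P^{s}$, which follows immediately from the product structure $\Phi=\mathrm{Id}\times\hat\phi$ and $\dim\Sigma=\dim\mathbb{C}P^{\frac{m_{1}}{2}}+\dim\hat\Sigma$. No genuine obstacle is expected; the whole corollary is a direct substitution of Theorem \ref{ohnita0} into Theorem \ref{COMPLEXTHEOREM}.
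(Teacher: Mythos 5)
Your proposal is correct and follows essentially the same route as the paper: the paper's proof likewise applies the codimension-$1$ and dimension-$1$ cases of Theorem \ref{COMPLEXTHEOREM} (Theorems \ref{cod1c} and \ref{dim1c}) to reduce to a stable minimal hypersurface, respectively a stable geodesic, in $\mathbb{R}P^{s}$, and then quotes Theorem \ref{ohnita0} to identify it as $\mathbb{R}P^{s-1}$, respectively $\mathbb{R}P^{1}$. Your added remarks on matching codimensions and on the stability of the product candidates are consistent with, though not spelled out in, the paper's two-line argument.
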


The first item in Theorem \ref{ohnita} tells us that the CSMI in the complex projective space behave well under the complex structure of the complex projective space. Therefore, it is expected (see \cite{lawson1973stable}, and \cite{torralbo2014stable} and references therein) that if the manifold has a complex structure, named $J$, then the CSMI in this manifold also behave well under $J$. In the specific case of $\mathbb{C}P^{\frac{m_{1}}{2}}\times M$, where $M$ is an arbitrary manifold, we do not know if this product manifold has a complex structure. Thus, there is no natural complex structure for the minimal submanifold to be well behaved with.  
However, we can indeed expect the CSMI to have a complex behaviour in the projections in the first component (the component associated to the complex projective space) of some vectors associated to the immersion. In fact, we have the following Lemma, 

\begin{lemma}
\label{complexdc2}
Under the same conditions than Theorem \ref{COMPLEXTHEOREM} we have:
\begin{itemize}
     \item If $d=2$, $\eta_{2}^{1}=\pm J(\eta_{1}^{1})$, where $\{\eta_{1},\eta_{2}\}$ is an orthonormal basis of $N_{p}\Sigma$, $ p \in \Sigma$.
    \item If $n=2$, $e_{2}^{1}=\pm J(e_{1}^{1})$, where $\{e_{1},e_{2}\}$ is an orthonormal basis of $T_{p}\Sigma$, $ p \in \Sigma$.
\end{itemize}
Here, $J$ is the complex structure of $\mathbb{C}P^{\frac{m_{1}}{2}}$ and $w^{1}$ is the projection of $w$ in $T_{\psi(p)}\mathbb{C}P^{\frac{m_{1}}{2}}$.
\end{lemma}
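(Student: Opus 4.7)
The plan is to adapt the same strategy used to prove Theorem \ref{COMPLEXTHEOREM} to the codimension (respectively dimension) two setting. By Sakamoto's generalized Veronese imbedding, $\mathbb{C}P^{m_1/2}$ embeds isometrically in some Euclidean space $\mathbb{R}^{l}$. For each constant parallel vector field $v\in\mathbb{R}^{l}$, form the vector field $V=(v^{\top},0)$ on $\mathbb{C}P^{m_1/2}\times M$, where $v^{\top}$ is the tangential projection to $T\mathbb{C}P^{m_1/2}$, and plug its normal projection $V^{N}$ onto $N_{\Sigma}^{\mathbb{C}P^{m_1/2}\times M}$ into the index form of $\Phi$. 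This is the same family of test sections that was used to prove Theorem \ref{COMPLEXTHEOREM}, so the first step is to rerun that calculation without discarding the codimension two information.

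The key step is to sum the index form evaluated at $V^{N}$ as $v$ runs through an orthonormal basis $\{v_{a}\}_{a=1}^{l}$ of $\mathbb{R}^{l}$. The Veronese identities of Lawson--Simons and Ohnita collapse this sum to a pointwise quantity $T$ on $\Sigma$ built from the action of $J$ on the projections $e_{i}^{1}\in T_{\psi(p)}\mathbb{C}P^{m_1/2}$ and $\eta_{\alpha}^{1}\in T_{\psi(p)}\mathbb{C}P^{m_1/2}$ of the tangent and normal bases of $\Sigma$. The Lawson--Simons bound gives $T\leq 0$ at every point, while stability of $\Phi$ gives $\int_{\Sigma}T\geq 0$; since the integrand has a sign, $T\equiv 0$ on $\Sigma$.

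The last step is to read off the geometric content of $T\equiv 0$. In Theorem \ref{COMPLEXTHEOREM}, the corresponding vanishing was already strong enough in codimension or dimension one to force the splitting structure; here the additional degree of freedom produces an extra piece of information. Specifically, in the case $d=2$ the vanishing of $T$ forces $\mathrm{span}(\eta_{1}^{1},\eta_{2}^{1})\subset T_{\psi(p)}\mathbb{C}P^{m_1/2}$ to be invariant under $J$ (the analogue of the Lawson--Simons conclusion $J(T\Sigma)=T\Sigma$ in $\mathbb{C}P^{m}$, transferred to the complex-projective factor of the product). Since $J$ is an isometry with $J^{2}=-\mathrm{Id}$, on any $J$-invariant two-plane it acts as a $\pm\pi/2$ rotation; combined with the orthonormality of $\{\eta_{1},\eta_{2}\}$ in the product, which at the $T\equiv 0$ locus forces $|\eta_{1}^{1}|=|\eta_{2}^{1}|$ and $\langle\eta_{1}^{1},\eta_{2}^{1}\rangle=0$, one concludes $\eta_{2}^{1}=\pm J(\eta_{1}^{1})$. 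The $n=2$ case is completely parallel, interchanging the roles of tangent and normal basis.

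The main obstacle is the bookkeeping in the Veronese computation on a product manifold: one must track carefully how the second fundamental form of $\mathbb{C}P^{m_1/2}\hookrightarrow\mathbb{R}^{l}$ interacts both with the splitting $T(\mathbb{C}P^{m_1/2}\times M)=T\mathbb{C}P^{m_1/2}\oplus TM$ and with the decomposition $T\mathbb{C}P^{m_1/2}|_{\psi(\Sigma)}=\psi_{*}(T\Sigma)^{1}\oplus N_{\Sigma}^{1}$, in order to isolate exactly the combination of $|J(e_{i}^{1})^{N}|^{2}$ and $|J(\eta_{\alpha}^{1})^{T}|^{2}$ terms whose sign can be controlled. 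Once $T$ is correctly identified, the passage from $T=0$ to $\eta_{2}^{1}=\pm J(\eta_{1}^{1})$ (and the dimensional analogue) is a short piece of linear algebra in the complex vector space $T_{\psi(p)}\mathbb{C}P^{m_1/2}$.
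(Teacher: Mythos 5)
Your proposal follows essentially the same route as the paper: the paper's Lemma \ref{GENERALC} is exactly the summed index form you describe, reducing it to the pointwise quantity $T=\lambda^{2}\big(\sum_{k,l}\langle J(\eta^{1}_{k}),\eta^{1}_{l}\rangle^{2}-\langle\eta^{1}_{k},\eta^{1}_{l}\rangle^{2}\big)$, and Lemma \ref{cod2c} then proves $T\le 0$ pointwise, uses stability to force $T\equiv 0$, and reads off $|\eta_{1}^{1}|=|\eta_{2}^{1}|$, $\langle\eta_{1}^{1},\eta_{2}^{1}\rangle=0$ and $\eta_{2}^{1}\in\mathrm{span}\{\eta_{1}^{1},J(\eta_{1}^{1})\}$ exactly as you indicate (with the $n=2$ case parallel via Equation (\ref{MAIN2})). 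The only substantive step you leave implicit is the pointwise inequality $T\le 0$, which does not follow verbatim from the Lawson--Simons bound (the projections $\eta^{1}_{k}$ are no longer orthonormal) but from decomposing $\eta_{2}^{1}$ along $\eta_{1}^{1}$, $J(\eta_{1}^{1})$ and an orthogonal unit vector $X$ and completing squares --- precisely the bookkeeping you correctly flag as the main obstacle.
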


If $M=\mathbb{C}P^{\frac{m_{2}}{2}}$, the Riemannian product $\mathbb{C}P^{\frac{m_{1}}{2}}\times \mathbb{C}P^{\frac{m_{2}}{2}}$ has two complex structures, $J_{1}$ and $J_{2}$ induced by the complex structure $J$, of the complex projective space (see Definition \ref{structures}). Therefore, according to what was mentioned before, it is expected that the CSMI in $\mathbb{C}P^{\frac{m_{1}}{2}}\times \mathbb{C}P^{\frac{m_{2}}{2}}$ behave well under a complex structure of $\mathbb{C}P^{\frac{m_{1}}{2}}\times \mathbb{C}P^{\frac{m_{2}}{2}}$. In fact, Lemma  \ref{complexdc2} gives us information about how the complex structure of the complex projective space behaves on projections onto the first component of normal or tangent vectors. Using the same technique as in the proof  of Lemma \ref{complexdc2} when $M=\mathbb{C}P^{\frac{m_{2}}{2}}$, we can get information about how the complex structure of the complex projective space behaves on projections onto the second component of normal or tangent vectors. Using the behaviours of those projections under $J$,  we can determine that for all points $p$ in $\Sigma$, $T_{p}\Sigma$ has the structure $J_{1}$ or the structure $J_{2}$. We then prove that in fact every point has the same complex structure. More precisely, we proceed by contradiction and we assume there are two points with different structures. This allows us to construct a real function $g:(t-\epsilon,t+\epsilon)\rightarrow \mathbb{R}$ such that it changes sign and vanishes to infinite order at $t$. We prove that, in our setting, the function $g$ is real analytic, therefore $g$ must vanish in the interval, which contradicts the change of sign. 

\begin{theorem}
\label{M3}
The only compact stable minimal immersions of codimension $d=2$ or dimension $n=2$ in the product manifold $\bar{M}:=\mathbb{C}P^{\frac{m_{1}}{2}}\times \mathbb{C}P^{\frac{m_{2}}{2}}$ are the complex ones, in the sense that each tangent space is invariant under the complex structure $J_{1}$ or $J_{2}$ of $\bar{M}$ (see Definition \ref{structures}). 
\end{theorem}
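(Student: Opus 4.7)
The plan is to combine Lemma \ref{complexdc2} with a symmetric counterpart for the second factor, reduce the theorem to a pointwise statement about complex structures on $T_p\Sigma$, and then promote this to a global statement via a real-analyticity argument.

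First I apply Lemma \ref{complexdc2} directly to the immersion $\Phi$, viewing $\mathbb{C}P^{m_2/2}$ as the arbitrary Riemannian manifold $M$. For the case $d=2$ this yields, at each $p\in\Sigma$,
\[
\eta_2^{1}=\varepsilon_1(p)\, J\!\bigl(\eta_1^{1}\bigr),\qquad \varepsilon_1(p)\in\{+1,-1\},
\]
for any orthonormal normal basis $\{\eta_1,\eta_2\}$, where $w^{1}$ denotes the component of $w$ in $T_{\psi(p)}\mathbb{C}P^{m_1/2}$. The proof of Lemma \ref{complexdc2} is symmetric in the two factors, so running it again with the roles of the factors reversed---plugging in test vector fields of the form $(0,v)$ built from a Veronese embedding of $\mathbb{C}P^{m_2/2}$---produces the companion identity on the second-component projections,
\[
\eta_2^{2}=\varepsilon_2(p)\, J\!\bigl(\eta_1^{2}\bigr),\qquad \varepsilon_2(p)\in\{+1,-1\}.
\]
The same recipe works in the $n=2$ case with tangent vectors $e_i$ replacing the normal vectors $\eta_i$.

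Next, each sign pattern $(\varepsilon_1(p),\varepsilon_2(p))$ corresponds, via Definition \ref{structures}, to invariance of $N_p\Sigma$ under exactly one of $J_1,J_2$; since a subspace of codimension $2$ (resp.\ dimension $2$) is $J_\alpha$-invariant iff its orthogonal complement is, this is equivalent to $J_\alpha$-invariance of $T_p\Sigma$. Hence at every $p$ the tangent space lies in one of the two families, and it remains only to show that a single $\alpha\in\{1,2\}$ can be chosen uniformly in $p$.

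For this global step I argue by contradiction, assuming that $T_{p_0}\Sigma$ is $J_1$-invariant but not $J_2$-invariant, while $T_{p_1}\Sigma$ is $J_2$-invariant but not $J_1$-invariant. Joining $p_0$ to $p_1$ by a smooth curve $\gamma\colon[0,1]\to\Sigma$, I build along $\gamma$ a scalar function $g$ out of local orthonormal frames of $T\Sigma$ (or of $N\Sigma$) whose sign encodes the $J_1$-versus-$J_2$ dichotomy---for instance an appropriately normalized product of pairings of the form $\langle J_\alpha X, Y\rangle$ which vanishes exactly when the first- or second-component projections degenerate. The pointwise dichotomy forces $g$ to change sign along $\gamma$, and at the first transition time $t_\ast$ the discrete nature of the dichotomy forces $g$ to vanish to infinite order. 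The decisive input is that a minimal immersion into a real-analytic Riemannian manifold is itself real analytic in suitable local coordinates; since $J_1,J_2$ are analytic tensor fields, $g$ is real analytic on $(t_\ast-\epsilon,t_\ast+\epsilon)$. A real-analytic function on a connected interval that vanishes to infinite order at an interior point must be identically zero, contradicting the change of sign.

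The main obstacle is the construction and analysis of the function $g$: one must choose the defining expression carefully enough that (i) its sign genuinely distinguishes $J_1$- from $J_2$-invariance at every point where both are defined, (ii) it extends real-analytically across $t_\ast$ despite the locally chosen frames, and (iii) its infinite-order vanishing at $t_\ast$ follows formally from the discreteness of the pointwise dichotomy and cannot be removed by rescaling. Once these technical points are handled, real analyticity of the minimal immersion closes the argument in a single line.
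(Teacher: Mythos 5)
Your proposal follows essentially the same route as the paper's: Lemma \ref{complexdc2} applied to each factor gives the pointwise sign dichotomy $\eta_2^{1}=\pm J(\eta_1^{1})$, $\eta_2^{2}=\pm J(\eta_1^{2})$, each of the four sign patterns is matched to $J_1$- or $J_2$-invariance of $N_p\Sigma$ (equivalently of $T_p\Sigma$), and global uniformity is obtained by a sign-change versus real-analyticity contradiction. However, the step you label as ``the main obstacle'' --- the construction of $g$ --- is precisely where the remaining mathematical content lies, and your proposal leaves it unresolved. The paper's resolution is concrete: fix a point $p$ at which all four projections are nonzero, parallel-transport an orthonormal normal basis $\{\eta_1,\eta_2\}$ along radial geodesics under the normal connection to obtain globally defined normal fields $N_1,N_2$ on $\Sigma\setminus cut(p)$, and along the geodesic $\gamma$ from $p$ to the offending point $r$ set $f(s)=\langle J(N_1^{1}),N_2^{1}\rangle_{\gamma(s)}$ and $g(s)=|N_2^{1}|^{2}_{\gamma(s)}$. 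The dichotomy gives $f=\pm g$ with a sign flip at the transition time $t$, smoothness of $f$ forces $g^{(k)}(t)=0$ for all $k$, and analyticity of $g$ follows from Morrey's analyticity of the minimal immersion together with the analytic ODE defining parallel transport. With merely ``local orthonormal frames'' along an arbitrary smooth curve, as you propose, $g$ is not obviously well defined or analytic across frame changes, so this choice matters.

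Two further gaps: first, the parallel-transport construction only covers $\Sigma\setminus cut(p)$, and the paper needs a separate argument (Part II of the proof of Theorem \ref{z2}) to propagate the structure across the cut locus by re-basing at a nearby point; your proposal does not address this. Second, your claim that each sign pattern corresponds to invariance under \emph{exactly} one of $J_1,J_2$ fails at degenerate points: if $\eta_1^{1}=\eta_2^{1}=0$ or $\eta_1^{2}=\eta_2^{2}=0$ the tangent space carries both structures (Remark \ref{2s}), and these points must be absorbed into either family --- indeed the strict inequalities $f(0)<0<f(1)$ that launch the contradiction require the nondegeneracy hypotheses at both endpoints, which the paper verifies explicitly before defining $f$.
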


\begin{remark} We have the following:
\begin{itemize}
\item For the particular case $n_{1}=n_{2}=2$ in Theorem \ref{TU2}, Torralbo and Urbano characterized the CSMI of dimension $2 \leq n \leq 3$ in $S^{2}\times S^{2}$. Applying Corollary \ref{A2}, Theorem \ref{M3} and using again the fact that $\mathbb{C}P^{1}$ is isometric to $S^{2}$, we obtain the same characterization. 
\item Corollary \ref{A2} and Theorem \ref{M3} give a complete characterization of CSMI in $\mathbb{C}P^{1}\times \mathbb{C}P^{1}$.
\end{itemize}
\end{remark}

On the other hand, for $\mathbb{H}P^{\frac{m_{1}}{4}}\times M$, we have the following:

\begin{theorem}
\label{QUATERNIONICTHEOREM}
Let $\Phi=(\psi,\phi):\Sigma \rightarrow \mathbb{H}P^{\frac{m_{1}}{4}}\times M$ be a compact stable minimal immersion of codimension $d$ and dimension $n$, where $M$ is any Riemannian manifold of dimension $m_{2}$. Then, 

\begin{itemize}
    \item If $d=1$,
     $\Sigma=\mathbb{H}P^{\frac{m_{1}}{4}}\times \hat{\Sigma}$, $\Phi=Id \times \hat{\phi}$ where $\hat{\phi}:\hat{\Sigma}\rightarrow M$ is a compact stable minimal  immersion of codimension $1$,  and therefore $\Phi(\Sigma)=\mathbb{H}P^{\frac{m_{1}}{4}}\times \hat{\phi}(\hat{\Sigma})$.
     In particular, for $m_{2}=1$, $\Sigma=\mathbb{H}P^{\frac{m_{1}}{4}}$, $\hat{\phi}$ is a constant function, and $\Phi(\Sigma)=\mathbb{H}P^{\frac{m_{1}}{4}}\times \{q\}$, for $q\in M$.
\item If $d=2$,
     $\Sigma=\mathbb{H}P^{\frac{m_{1}}{4}}\times \hat{\Sigma}$, $\Phi=Id\times \hat{\phi}$ where $\hat{\phi}:\hat{\Sigma}\rightarrow M$ is a compact stable minimal  immersion of codimension $2$,  and therefore $\Phi(\Sigma)=\mathbb{H}P^{\frac{m_{1}}{4}}\times \hat{\phi}(\hat{\Sigma})$.
     In particular, for $m_{2}=1$, there are no compact stable minimal immersions of codimension $2$ in $\mathbb{H}P^{\frac{m_{1}}{4}}\times M$. And for $m_{2}=2$, $\Sigma=\mathbb{H}P^{\frac{m_{1}}{4}}$, $\hat{\phi}$ is a constant function, and $\Phi(\Sigma)=\mathbb{H}P^{\frac{m_{1}}{4}}\times \{q\}$, for $q\in M$.
    \item If $n=1$, $\phi:\Sigma \rightarrow M$ is a  stable geodesic, $\psi$ is a constant function,  and therefore $\Phi(\Sigma)=\{r\}\times \phi(\Sigma)$ with $r$ a point of $\mathbb{H}P^{\frac{m_{1}}{4}}$.
    
    \item If $n=2$, $\phi:\Sigma \rightarrow M$ is a stable minimal immersion of dimension $2$, $\psi$ is a constant function, and therefore $\Phi(\Sigma)=\{r\}\times \phi(\Sigma)$ with $r$ a point of $\mathbb{H}P^{\frac{m_{1}}{4}}$.
\end{itemize}
\end{theorem}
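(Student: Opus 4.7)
The plan is to follow the strategy pioneered by Simons, Lawson--Simons, and Ohnita and already adapted to products in Theorem \ref{COMPLEXTHEOREM}. Via the generalized Veronese imbedding $\mathbb{H}P^{m_{1}/4}\hookrightarrow\mathbb{R}^{l}$ of Sakamoto, I form, for each constant parallel vector $v\in\mathbb{R}^{l}$, the extended ambient field $(v,0)$ on $\mathbb{R}^{l}\times M$, compose with orthogonal projection onto $T\mathbb{H}P^{m_{1}/4}\times TM$, and then take the normal projection $\xi_{v}$ to $\Sigma$. Inserting $\xi_{v}$ in the Jacobi quadratic form $Q$ and summing over an orthonormal basis $\{v_{i}\}$ of $\mathbb{R}^{l}$ produces a global quantity $T=\sum_{i}Q(\xi_{v_{i}},\xi_{v_{i}})$ which, by the Veronese-type identities used in Ohnita's proof of Theorem \ref{ohnita} for $\mathbb{F}=\mathbb{H}$, is expressible pointwise through the three almost-complex structures $J_{1},J_{2},J_{3}$ underlying the quaternionic structure of $\mathbb{H}P^{m_{1}/4}$ and through the first-factor projections of a normal frame and a tangent frame of $\Sigma$.

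A direct adaptation of Ohnita's trace computation shows $T\leq 0$; since $\Phi$ is stable one also has $T\geq 0$, so $T\equiv 0$ and the integrand vanishes pointwise. The resulting pointwise identity is that for any normal frame $\{\eta_{a}\}$ of $\Sigma$ with first-component projections $\eta_{a}^{1}\in T_{\psi(p)}\mathbb{H}P^{m_{1}/4}$, every vector $J_{\alpha}\eta_{a}^{1}$ ($\alpha=1,2,3$) again projects into the first-factor normal subspace (i.e.\ into the span of $\{\eta_{b}^{1}\}_{b}$); the analogous statement holds for a tangent frame $\{e_{i}\}$ with its first-factor projections $e_{i}^{1}$.

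From this pointwise identity the four cases fall out by a dimension count. For $d=1$ the first-factor normal subspace is at most one-dimensional, yet if some $\eta^{1}$ were nonzero then the four vectors $\eta^{1},J_{1}\eta^{1},J_{2}\eta^{1},J_{3}\eta^{1}$ would be linearly independent inside it, which is absurd; hence $\eta^{1}\equiv 0$ for every normal vector. The same four-versus-two count disposes of $d=2$, and symmetric versions, applied to a tangent frame, handle $n=1$ and $n=2$. Once $\eta^{1}\equiv 0$ on all normals, $T_{p}\Sigma$ contains the full subspace $T_{\psi(p)}\mathbb{H}P^{m_{1}/4}\times\{0\}$, so $\Sigma$ carries two complementary parallel distributions; a de Rham-type splitting then yields $\Sigma=\mathbb{H}P^{m_{1}/4}\times\hat{\Sigma}$ with $\Phi=Id\times\hat{\phi}$, and a direct check on the second variation transfers stability of $\Phi$ to stability of $\hat{\phi}$. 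When instead $e_{i}^{1}\equiv 0$ the map $\psi$ has vanishing differential and is therefore constant, and the remaining property (stable geodesic for $n=1$, stable minimal surface for $n=2$) reduces to the stability of $\phi$ alone.

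The main technical hurdle is the first step: organizing the Veronese trace so that the pointwise integrand manifestly exposes all three quaternionic structures simultaneously, since this effective four-dimensionality is precisely what forces the codimension $2$ and dimension $2$ conclusions (which have no analogue in the complex case of Theorem \ref{COMPLEXTHEOREM}). The degenerate sub-cases $m_{2}=1$ (for $d=1$) and $m_{2}\leq 2$ (for $d=2$), together with the reduction from the abstract splitting to a genuine Riemannian product with $Id$ as first factor, are then routine observations on the induced minimal immersion $\hat{\phi}:\hat{\Sigma}\to M$.
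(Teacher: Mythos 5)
Your proposal is correct and follows essentially the same route as the paper: the same test sections (normal projections of $(v,0)$ summed over an orthonormal basis of $\mathbb{R}^{m}$), the same pointwise non-positive trace formula involving $J_{1},J_{2},J_{3}$ (the paper's Lemma \ref{GENERALH}), stability forcing the integrand to vanish, and a splitting argument (the paper's Riemannian-submersion Lemma \ref{lemmafact}, using simple connectedness of $\mathbb{H}P^{\frac{m_{1}}{4}}$) to produce $\Phi=Id\times\hat{\phi}$, respectively the constancy of $\psi$ in the low-dimension cases. The only difference is cosmetic and occurs in the $d=2$ and $n=2$ endgame: you deduce that $\mathrm{span}\{\eta_{1}^{1},\eta_{2}^{1}\}$ is invariant under all three $J_{\alpha}$ and count dimensions, whereas the paper derives $\eta_{2}^{1}=\pm J_{s}(\eta_{1}^{1})$ for each $s$ and reaches a contradiction via the quaternionic relations --- both rest on the same fact that a nonzero subspace invariant under $J_{1},J_{2},J_{3}$ must have dimension at least four.
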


\begin{remark}
As a consequence of the previous theorem, we have a complete classification of CSMI in $\mathbb{H}P^{1}\times M^{1}$. 
\end{remark}

As in the complex case, the last theorem tells us that the CSMI of codimension $1$ and $2$ or dimension $1$ and $2$ in $\mathbb{H}P^{\frac{m_{1}}{4}}\times M$ are the product of trivial CSMI of $\mathbb{H}P^{\frac{m_{1}}{4}}$  with CSMI of $M$. Therefore, as an application, using Theorems \ref{ohnita0} and \ref{ohnita}, we have:

\begin{corollary}
\label{A4}
There are no compact stable minimal immersions of 

\begin{itemize}
\item codimension $d=1$ in the product manifold  $\mathbb{H}P^{\frac{m_{1}}{4}}\times S^{s}$,  $\mathbb{H}P^{\frac{m_{1}}{4}}\times \mathbb{O} P^{2}$, or  $\mathbb{H}P^{\frac{m_{1}}{4}}\times \mathbb{K}P^{s}$, other than $\mathbb{H}P^{\frac{m_{1}}{4}}\times \{q\}$ in $\mathbb{H}P^{\frac{m_{1}}{4}}\times S^{1}$, for $q\in S^{1}$.

\item codimension $d=2$ in the product manifold $\mathbb{H}P^{\frac{m_{1}}{4}}\times S^{s}$, $\mathbb{H}P^{\frac{m_{1}}{4}}\times \mathbb{O} P^{2}$, or $\mathbb{H}P^{\frac{m_{1}}{4}}\times \mathbb{H}P^{s}$, other than $\mathbb{H}P^{\frac{m_{1}}{4}}\times \{q\}$ in  $\mathbb{H}P^{\frac{m_{1}}{4}}\times S^{2}$, for $q\in S^{2}$.

\item dimension $n=1$ in the product manifold  $\mathbb{H}P^{\frac{m_{1}}{4}}\times S^{s}$,  $\mathbb{H}P^{\frac{m_{1}}{4}}\times \mathbb{O} P^{2}$, or  $\mathbb{H}P^{\frac{m_{1}}{4}}\times \mathbb{K}P^{s}$, other than $\{r\}\times S^{1}$ in $\mathbb{H}P^{\frac{m_{1}}{4}}\times S^{1}$, for $r\in \mathbb{H}P^{\frac{m_{1}}{4}}$.

\item dimension $n=2$ in the product manifold $\mathbb{H}P^{\frac{m_{1}}{4}}\times S^{s}$, $\mathbb{H}P^{\frac{m_{1}}{4}}\times \mathbb{O} P^{2}$, or $\mathbb{H}P^{\frac{m_{1}}{4}}\times \mathbb{H}P^{s}$, other than $\{r\}\times S^{2}$ in $\mathbb{H}P^{\frac{m_{1}}{4}}\times S^{2}$, for $r\in \mathbb{H}P^{\frac{m_{1}}{4}}$.
\end{itemize}
Here, $\mathbb{K}\in \{\mathbb{C}, \mathbb{H}\}$.
\end{corollary}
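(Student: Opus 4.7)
The plan is to apply Theorem \ref{QUATERNIONICTHEOREM} to reduce each of the four bullets to a question about the existence of compact stable minimal immersions living entirely in the second factor $M$, and then invoke Theorem \ref{t1.1} or Theorem \ref{ohnita} to determine which such immersions exist in the chosen compact rank one symmetric space.

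For the codimension $d=1$ bullet, Theorem \ref{QUATERNIONICTHEOREM} forces $\Sigma = \mathbb{H}P^{\frac{m_{1}}{4}} \times \hat{\Sigma}$ with $\hat{\phi}:\hat{\Sigma}\to M$ a compact stable minimal immersion of codimension $1$. When $M = S^{s}$ with $s\geq 2$, Theorem \ref{t1.1} gives Morse index $\geq 1$, so no such $\hat{\phi}$ exists; when $s=1$, $\hat{\Sigma}$ is $0$-dimensional and $\hat{\phi}$ is a constant, yielding the slice $\mathbb{H}P^{\frac{m_{1}}{4}}\times\{q\}$ allowed in the statement. For $M = \mathbb{O}P^{2}$, $\mathbb{C}P^{s}$, or $\mathbb{H}P^{s}$, Theorem \ref{ohnita} forces $\dim\hat{\Sigma}$ to be a multiple of $8$, $2$, or $4$ respectively, none of which matches $\dim M - 1$ for any $s\geq 1$, so no such $\hat{\phi}$ exists. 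The codimension $d=2$ bullet is handled identically using the second part of Theorem \ref{QUATERNIONICTHEOREM}: the parity obstructions from Theorem \ref{ohnita} rule out all CSMI in $\mathbb{O}P^{2}$ and $\mathbb{H}P^{s}$ (where $\dim M - 2$ is not $8$ and not a multiple of $4$ respectively), while Theorem \ref{t1.1} leaves, for $M = S^{s}$, only the case $s=2$ and $\dim\hat{\Sigma} = 0$, giving the slice $\mathbb{H}P^{\frac{m_{1}}{4}}\times\{q\}$ in $\mathbb{H}P^{\frac{m_{1}}{4}}\times S^{2}$.

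The dimension $n=1$ and $n=2$ bullets are treated in the same spirit. By Theorem \ref{QUATERNIONICTHEOREM} we may write $\Phi(\Sigma) = \{r\}\times \phi(\Sigma)$ where $\phi$ is a stable closed geodesic (respectively a stable minimal immersion of dimension $2$) in $M$. For $M = \mathbb{O}P^{2}$, $\mathbb{C}P^{s}$, or $\mathbb{H}P^{s}$, the dimension constraints of Theorem \ref{ohnita} (CSMI dimensions $8$, $2l$, $4l$ respectively) directly eliminate $n=1$ in all three targets and $n=2$ in $\mathbb{O}P^{2}$ and $\mathbb{H}P^{s}$. For $M = S^{s}$, Theorem \ref{t1.1} forces Morse index $\geq s-n$, so the only stable closed geodesic is $S^{1}$ itself (giving $\{r\}\times S^{1}$ in $\mathbb{H}P^{\frac{m_{1}}{4}}\times S^{1}$) and the only stable minimal surface is $S^{2}$ itself (giving $\{r\}\times S^{2}$ in $\mathbb{H}P^{\frac{m_{1}}{4}}\times S^{2}$).

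The argument is essentially a case-by-case enumeration once Theorem \ref{QUATERNIONICTHEOREM} has been applied, so there is no real obstacle; the only point requiring care is the bookkeeping of the admissible dimensions from Theorem \ref{ohnita} against $\dim M - d$ and $n$, and checking that the remaining low-dimensional exceptions (dimension $0$ submanifolds of $S^{1}$ and $S^{2}$, and $S^{1}$, $S^{2}$ themselves) do arise as genuine slices $\mathbb{H}P^{\frac{m_{1}}{4}}\times\{q\}$ and $\{r\}\times S^{k}$ allowed by the statement.
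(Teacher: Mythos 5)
Your proposal is correct and follows essentially the same route as the paper: the paper derives Corollary \ref{A4} by applying the four classification theorems underlying Theorem \ref{QUATERNIONICTHEOREM} to reduce to a CSMI of the appropriate codimension or dimension in the second factor, and then rules these out (except for the listed slices) via the index bound of Theorem \ref{t1.1} and the dimension constraints of Theorem \ref{ohnita}. Your dimension bookkeeping for $S^{s}$, $\mathbb{O}P^{2}$, $\mathbb{C}P^{s}$, and $\mathbb{H}P^{s}$, including the low-dimensional exceptional slices, matches the paper's case analysis.
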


\begin{remark}
The particular cases in Corollary \ref{A4} of CSMI of codimension $d=1$ in  $\mathbb{H}P^{\frac{m_{1}}{4}}\times S^{s}$ for $s\geq 2$, or the case of codimension $2$ or dimension $2$ in $\mathbb{H}P^{\frac{m_{1}}{4}}\times S^{s}$, for $s\geq 3$, can be also obtained as a consequence of Theorem \ref{TU} from  Torralbo and Urbano by setting $M=\mathbb{H}P^{\frac{m_{1}}{4}}$ and applying Theorem \ref{ohnita}.
\end{remark}

\begin{corollary} 
\label{A5}
The only compact stable minimal immersion of \begin{itemize}
    \item codimension $d=1$ in the product space $\mathbb{H}P^{\frac{m_{1}}{4}}\times \mathbb{R}P^{s}$ is $\mathbb{H}P^{\frac{m_{1}}{4}}\times \mathbb{R}P^{s-1}$.
    
    \item  codimension $d=2$ in the product space $\mathbb{H}P^{\frac{m_{1}}{4}}\times \mathbb{R}P^{s}$ is $\mathbb{H}P^{\frac{m_{1}}{4}}\times \mathbb{R}P^{s-2}$, and in $\mathbb{H}P^{\frac{m_{1}}{4}}\times \mathbb{C}P^{s}$ is $\mathbb{H}P^{\frac{m_{1}}{4}}\times M$, where $M$ is a complex submanifold of dimension $2s-2$ immersed in  $\mathbb{C}P^{s}$ .
    
    \item dimension $n=1$ in the product space $\mathbb{H}P^{\frac{m_{1}}{4}}\times \mathbb{R}P^{s}$ is $\{r\}\times \mathbb{R}P^{1}$, $r\in \mathbb{H}P^{\frac{m_{1}}{4}}$.
    
      \item dimension $n=2$ in the product space $\mathbb{H}P^{\frac{m_{1}}{4}}\times \mathbb{R}P^{s}$ is  $ \{r\}\times \mathbb{R}P^{2}$, and in  $\mathbb{H}P^{\frac{m_{1}}{4}}\times \mathbb{C}P^{s}$ is $\{r\}\times M$,  where $M$ is a complex submanifold of dimension $2$ immersed in  $\mathbb{C}P^{s}$  and $r\in \mathbb{H}P^{\frac{m_{1}}{4}}$.
    
\end{itemize}
\end{corollary}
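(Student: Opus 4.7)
\bigskip

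The plan is to deduce this corollary by combining Theorem \ref{QUATERNIONICTHEOREM} with the classification of CSMI in the factor $M$. Concretely, Theorem \ref{QUATERNIONICTHEOREM} guarantees that any CSMI of codimension $1$ or $2$ (respectively dimension $1$ or $2$) in $\mathbb{H}P^{\frac{m_{1}}{4}}\times M$ splits as $\mathbb{H}P^{\frac{m_{1}}{4}}\times \hat{\phi}(\hat{\Sigma})$ (respectively $\{r\}\times \phi(\Sigma)$), where the second factor is a CSMI in $M$ of the appropriate codimension or dimension. It therefore suffices to identify which submanifolds of $M$ occur, for $M=\mathbb{R}P^{s}$ and $M=\mathbb{C}P^{s}$, using the classical results of Ohnita (Theorem \ref{ohnita0}) and Lawson--Simons--Ohnita (Theorem \ref{ohnita}).

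For the codimension cases, I would argue as follows. If $d=1$ and $M=\mathbb{R}P^{s}$, Theorem \ref{QUATERNIONICTHEOREM} reduces the problem to classifying CSMI of codimension $1$ in $\mathbb{R}P^{s}$; by Theorem \ref{ohnita0} the only such submanifold is $\mathbb{R}P^{s-1}$, giving $\mathbb{H}P^{\frac{m_{1}}{4}}\times \mathbb{R}P^{s-1}$. If $d=2$ and $M=\mathbb{R}P^{s}$, Theorem \ref{QUATERNIONICTHEOREM} reduces to CSMI of codimension $2$ in $\mathbb{R}P^{s}$; again by Theorem \ref{ohnita0} this must be $\mathbb{R}P^{s-2}$. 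If $d=2$ and $M=\mathbb{C}P^{s}$, the second factor must be a CSMI of real codimension $2$ (hence real dimension $2s-2$) in $\mathbb{C}P^{s}$; by the first item of Theorem \ref{ohnita} this second factor is a complex submanifold of $\mathbb{C}P^{s}$ of (real) dimension $2s-2$.

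For the dimension cases, the same scheme applies. If $n=1$ and $M=\mathbb{R}P^{s}$, Theorem \ref{QUATERNIONICTHEOREM} says the immersion is $\{r\}\times \phi(\Sigma)$ with $\phi$ a stable geodesic in $\mathbb{R}P^{s}$; Theorem \ref{ohnita0} identifies this as $\mathbb{R}P^{1}$. If $n=2$ and $M=\mathbb{R}P^{s}$, the second factor is a CSMI of dimension $2$ in $\mathbb{R}P^{s}$, which by Theorem \ref{ohnita0} is $\mathbb{R}P^{2}$. If $n=2$ and $M=\mathbb{C}P^{s}$, the second factor is a CSMI of (real) dimension $2$ in $\mathbb{C}P^{s}$, which by the first item of Theorem \ref{ohnita} must be a complex (hence $J$-invariant) submanifold of real dimension $2$, i.e.\ a complex curve in $\mathbb{C}P^{s}$.

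There is no substantive obstacle here: all the genuine work is absorbed into Theorem \ref{QUATERNIONICTHEOREM} (which performs the splitting) and into Theorems \ref{ohnita0} and \ref{ohnita} (which classify the factor). The only point that requires a momentary check is a dimension count, namely that real codimension $2$ in $\mathbb{C}P^{s}$ corresponds to real dimension $2s-2$, so that the resulting complex submanifold indeed has the dimension stated in the corollary; this is immediate since $\dim_{\mathbb{R}}\mathbb{C}P^{s}=2s$.
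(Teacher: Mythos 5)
Your proposal is correct and follows essentially the same route as the paper: the paper likewise deduces each item by applying the splitting theorems behind Theorem \ref{QUATERNIONICTHEOREM} (Theorems \ref{cod1h}, \ref{hz1}, \ref{dim1h}, \ref{dim2h}) and then invoking Theorems \ref{ohnita0} and \ref{ohnita} to identify the stable factor in $\mathbb{R}P^{s}$ or $\mathbb{C}P^{s}$. Your dimension count for the $\mathbb{C}P^{s}$ cases matches the paper's statements as well.
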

This paper is structured as follows:\\
In the second section, we state important notations, formulas, and theorems needed for the developments in the third and fourth section. The second section is divided in three subsections.

\begin{itemize}
\item In Subsection \ref{subsetion0}, we define the Jacobi operator and the Morse index.  
\item In Subsection \ref{notations}, we set up notation.
    \item In Subsection \ref{section1}, we state some formulas involving the geometry of the complex and quaternionic projective spaces which are needed for computations presented in Subsections \ref{gfc} and \ref{quaternioniccompu}.
    \item In Subsection \ref{RS}, we recall some important definitions and propositions related to Riemannian submersions. We also establish Lemma \ref{lemmafact} which states a sufficient condition for a Riemannian submersion to be trivial. This lemma will be used in the proof of classification theorems of CSMI presented in Subsections \ref{CC1} and \ref{QC12}.
\end{itemize}
The third section is dedicated to the study of CSMI in the product of a complex projective space with any other Riemannian manifold. The third section is divided in four subsections.

\begin{itemize}
    \item In Subsection \ref{gfc}, we prove a general formula  that will be used throughout the current section.
    \item In Subsection \ref{CC1}, we prove a classification theorem where the codimension of the immersion is $1$. Moreover, we obtain some corollaries when the second manifold is a compact rank one space.
    \item In Subsection \ref{CC2}, we use the general formula obtained in Subsection \ref{gfc} for when the codimension or dimension of the immersion is two. This allows us to prove a characterization of CSMI in the product of two complex projective spaces. 
    \item In Subsection \ref{CD2}, we prove a classification theorem where the dimension of the immersion is $1$. Additionally, we obtain some corollaries when the second manifold is a compact rank one space.
\end{itemize}
The fourth section is dedicated to the study of CSMI in the product of a quaternionic projective space with any other Riemannian manifold. The fourth section is divided in three subsections.

\begin{itemize}
    \item In Subsection \ref{quaternioniccompu}, we prove a general formula  that will be used throughout this section. 
    \item In Subsection \ref{QC12}, we prove a classification theorem where the codimension of the immersion is $1$ and $2$. Moreover, we obtain some corollaries for when the second manifold is a compact rank one space. 
    
    \item Analogously, in Subsection \ref{QD12}, we prove a classification theorem where the dimension of the immersion is $1$ and $2$. Additionally,  we obtain some corollaries for when the second manifold is a compact rank one space.\\
    \end{itemize}
    
\textbf{Acknowledgments}
This paper was made possible thanks to a PhD scholarship (IMU Breakout Graduate Fellowship) from IMU and TWAS to the author. I  am very grateful for the patience and guidance of Professors Gonzalo Garc\'ia,  Fernando Marques,  and Heber Mesa.  Part of this work was done while the author was visiting Princeton University as a VSRC. I am grateful to Princeton University for the hospitality. I am also thankful to the Department of Mathematics at Universidad del Valle for partial support my visit to Princeton. Finally, I thank Shuli Chen for comments on this work.
\end{section}

\section{PRELIMINARIES}

\begin{subsection}{JACOBI OPERATOR}
\label{subsetion0}

Let $\Phi:\Sigma\rightarrow M$ be a compact Riemannian immersion, where $\Sigma$ and $M$ are Riemannian manifolds of dimensions $n$ and $n+d$, respectively. Let $F:\Sigma\times (-\epsilon,\epsilon)\rightarrow M$ be a smooth map such that $F(\cdot,0)=\Phi(\cdot)$. We denote $F_{t}(x):=F(x,t)$ and $\Sigma_{t}:=F_{t}(\Sigma)$. Then, we get the first variational formula 

\begin{theorem}{First variational formula:}
\begin{center}

    $\displaystyle \frac{d}{dt} |\Sigma_{t}|\biggr\vert_{t=0}=-\int_{\Sigma}\langle X,H \rangle d\Sigma$,
\end{center}
where $|\Sigma_{t}|$ denotes the area of $\Sigma_{t}$, $H$ is the mean curvature vector of $\Sigma$,   $W(p)=\frac{\partial F}{\partial t}(p,0)$ and $X:=W^{N}$. 
\end{theorem}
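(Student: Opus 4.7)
The plan is to express the area $|\Sigma_t|$ as an integral of a volume density on $\Sigma$ pulled back by $F_t$, differentiate under the integral sign via Jacobi's formula for the derivative of a determinant, and then decompose the resulting expression according to the splitting $W = W^T + X$ into tangential and normal parts.

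Concretely, I would fix a local orthonormal frame $\{e_1,\dots,e_n\}$ on $\Sigma$ with respect to the induced metric $g_0$, and introduce the $t$-dependent induced metric $g_t(e_i,e_j) = \langle (F_t)_\ast e_i,(F_t)_\ast e_j\rangle$, so that $|\Sigma_t| = \int_\Sigma \sqrt{\det g_t}\, d\Sigma$. By Jacobi's formula,
\begin{equation*}
\frac{d}{dt}\sqrt{\det g_t}\Big|_{t=0} = \tfrac{1}{2}\,\mathrm{tr}\bigl(g_0^{-1}\dot g_0\bigr) = \tfrac{1}{2}\sum_{i=1}^n \dot g_0(e_i,e_i).
\end{equation*}
Using that $[\partial_t,e_i]=0$ on $\Sigma\times(-\epsilon,\epsilon)$, so that $\partial_t (F_t)_\ast e_i = \nabla^{\bar M}_{(F_t)_\ast e_i} W$ where $\nabla^{\bar M}$ denotes the ambient Levi-Civita connection of $M$, I would compute
\begin{equation*}
\dot g_0(e_i,e_i) = 2\,\langle \nabla^{\bar M}_{e_i} W, e_i\rangle .
\end{equation*}

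Next, I would split $W = W^T + X$, and treat the two pieces separately. For the tangential piece, $\sum_i \langle \nabla^{\bar M}_{e_i} W^T, e_i\rangle = \mathrm{div}_\Sigma W^T$, which integrates to zero by the divergence theorem since $\Sigma$ is compact without boundary. For the normal piece, using that $\langle X, e_i\rangle \equiv 0$ along $\Sigma$, I would differentiate to obtain
\begin{equation*}
\langle \nabla^{\bar M}_{e_i} X, e_i\rangle = -\langle X, \nabla^{\bar M}_{e_i} e_i\rangle = -\langle X, B(e_i,e_i)\rangle,
\end{equation*}
where $B$ is the second fundamental form of $\Sigma$ in $M$. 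Summing in $i$ produces $-\langle X, H\rangle$ with $H=\sum_i B(e_i,e_i)$, which is precisely the mean curvature vector as defined in the statement.

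Combining the two pieces and integrating over $\Sigma$ yields the asserted formula $\frac{d}{dt}|\Sigma_t|\big|_{t=0} = -\int_\Sigma \langle X, H\rangle\, d\Sigma$. The only real care is in the bookkeeping: justifying the interchange of $\partial_t$ with $\nabla^{\bar M}$, verifying that the orthonormal frame choice at $t=0$ does not affect the final trace, and invoking compactness of $\Sigma$ to discard the $\mathrm{div}_\Sigma W^T$ term via Stokes. I do not expect a serious obstacle here; this is a standard calculation whose main subtlety is sign conventions for $B$ and $H$.
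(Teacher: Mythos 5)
Your proof is correct: it is the standard derivation of the first variation formula via Jacobi's formula for $\frac{d}{dt}\sqrt{\det g_t}$, the symmetry $\partial_t (F_t)_\ast e_i=\nabla^{\bar M}_{(F_t)_\ast e_i}W$, the splitting $W=W^T+X$, and the divergence theorem on the closed manifold $\Sigma$. The paper states this theorem in its preliminaries without proof, as a classical fact, so there is no authorial argument to compare against; your computation is the usual textbook one, with the sign and normalization conventions ($H=\sum_i B(e_i,e_i)$) matching those used in the paper.
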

\begin{definition}
 We say that $\Sigma$ is minimal if $H=0$.
\end{definition}
\begin{remark}
Notice that $X\in \Gamma(N\Sigma)$, i.e., it is a section in the normal bundle of $\Sigma$.
\end{remark}

When studying minimal immersions, it is natural to ask about the extent to which they locally minimize area. This leads us to consider the second variation of the area functional.

\begin{theorem}{Second Variation formula.}\\ If $\Phi:\Sigma\rightarrow M$ is a compact minimal immersion, then: 
\begin{center}
    $\displaystyle \frac{d^{2}}{dt^{2}} |\Sigma_{t}|\biggr\vert_{t = 0}=-\int_{\Sigma} \langle J_{\Sigma}X,X \rangle d\Sigma$,
\end{center}
where $J_{\Sigma}$ is the elliptic Jacobi operator defined by\\

\begin{center}
    $\displaystyle J_{\Sigma}(X):=\triangle^{\perp}X+(\sum_{i=1}^{n}R^{M}(X,e_{i})e_{i})^{\perp}+\sum_{i,j=1}^{n} \langle B(e_{i},e_{j}),X \rangle B(e_{i},e_{j}) $,
\end{center}
and the normal Laplacian is given by

\begin{center}
    $\displaystyle \triangle^{\perp}X=\sum_{i=1}^{n}(\nabla^{\perp}_{e_{i}}\nabla^{\perp}_{e_{i}}X-\nabla^{\perp}_{(\nabla_{e_{i}}e_{i})^{T}}X)$.
\end{center}
Here, $\{e_{1},\ldots,e_{n}\}$ is an orthonormal basis of $T\Sigma$, $\nabla$ is the connection of $M$, $\nabla^{\perp}$ is the normal connection of $\Sigma$ in $M$, $B$ is the second fundamental form of $\Sigma$ in $M$, and $R^{M}$ is the curvature tensor of $M$. 
\end{theorem}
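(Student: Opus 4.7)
The plan is to reduce the computation of $\frac{d^{2}}{dt^{2}}|\Sigma_{t}|_{t=0}$ to a tractable integral involving only the normal component $X=W^{N}$, and then identify the resulting expression with $-\int_{\Sigma}\langle J_{\Sigma}X,X\rangle\,d\Sigma$ after an integration by parts. First I would observe that we may assume $W$ is purely normal. Indeed, writing $W=X+T$ with $T$ tangential, the tangential component generates a diffeomorphism of $\Sigma$ at first order, and the cross-terms in the second variation involving $T$ contract with the mean curvature $H$; since $\Sigma$ is minimal these contributions vanish. This reduction is the standard justification for working with $X\in\Gamma(N\Sigma)$ only.

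Next I would compute $|\Sigma_{t}|=\int_{\Sigma}\sqrt{\det g_{t}}\,dx$ by choosing, at an arbitrary point $p\in\Sigma$, a local orthonormal frame $\{e_{1},\ldots,e_{n}\}$ of $T\Sigma$ extending $F_{\ast}(\partial/\partial x^{i})$ at $t=0$ and satisfying $(\nabla_{e_{i}}e_{j})(p)=0$. Standard matrix calculus gives
\begin{equation*}
\frac{\partial}{\partial t}g_{ij}=\langle\nabla_{e_{i}}W,e_{j}\rangle+\langle e_{i},\nabla_{e_{j}}W\rangle,
\end{equation*}
and differentiating once more, at $t=0$ and at $p$,
\begin{equation*}
\frac{d^{2}}{dt^{2}}|\Sigma_{t}|\Big|_{t=0}=\int_{\Sigma}\Bigl(\tfrac{1}{2}\sum_{i}\tfrac{\partial^{2}g_{ii}}{\partial t^{2}}+\tfrac{1}{4}\bigl(\sum_{i}\tfrac{\partial g_{ii}}{\partial t}\bigr)^{2}-\tfrac{1}{2}\sum_{i,j}\bigl(\tfrac{\partial g_{ij}}{\partial t}\bigr)^{2}\Bigr)d\Sigma.
\end{equation*}
The quadratic terms in $\partial_{t}g_{ij}$ eventually collect with the $|B|^{2}$ contribution and the squared-mean-curvature term (which vanishes by minimality).

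The main analytic step is to rewrite $\sum_{i}\partial_{t}^{2}g_{ii}$ in a geometric form. I would use $[\partial_{t},e_{i}]=0$, so
\begin{equation*}
\nabla_{\partial_{t}}\nabla_{e_{i}}W=\nabla_{e_{i}}\nabla_{\partial_{t}}W+R^{M}(\partial_{t},e_{i})W,
\end{equation*}
and then decompose $\nabla_{e_{i}}X=\nabla_{e_{i}}^{\perp}X-A_{X}(e_{i})$ via the Weingarten relation $\langle A_{X}(e_{i}),e_{j}\rangle=\langle B(e_{i},e_{j}),X\rangle$. Applying minimality to discard the terms involving $\nabla_{\partial_{t}}\partial_{t}$ and the tangential projection of $W$, and collecting everything carefully, produces the well-known intermediate expression
\begin{equation*}
\frac{d^{2}}{dt^{2}}|\Sigma_{t}|\Big|_{t=0}=\int_{\Sigma}\Bigl(|\nabla^{\perp}X|^{2}-\sum_{i}\langle R^{M}(X,e_{i})e_{i},X\rangle-\sum_{i,j}\langle B(e_{i},e_{j}),X\rangle^{2}\Bigr)d\Sigma.
\end{equation*}
The last step is an integration by parts on the first term using the definition of $\triangle^{\perp}$: $\int_{\Sigma}|\nabla^{\perp}X|^{2}d\Sigma=-\int_{\Sigma}\langle\triangle^{\perp}X,X\rangle d\Sigma$. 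Packaging the three terms yields exactly $-\int_{\Sigma}\langle J_{\Sigma}X,X\rangle d\Sigma$ with $J_{\Sigma}$ as stated.

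The main obstacle is purely organizational: the bookkeeping of the many terms arising from $\partial_{t}^{2}g_{ij}$ and the careful use of orthogonal decompositions, the Weingarten identity, and the curvature commutation. Nothing conceptually delicate appears, but minimality must be invoked at precisely the right moment to kill both the tangential cross-terms in the initial reduction and the mean-curvature contributions in the algebraic rearrangement.
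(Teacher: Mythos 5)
The paper states this second variation formula as a background result in the preliminaries and gives no proof of its own, so there is nothing to compare against; your outline is the standard derivation (reduction to normal variations using minimality, expansion of $\partial_{t}^{2}\sqrt{\det g_{t}}$, the curvature commutation $\nabla_{\partial_{t}}\nabla_{e_{i}}W=\nabla_{e_{i}}\nabla_{\partial_{t}}W+R^{M}(\partial_{t},e_{i})W$, the Weingarten decomposition, and a final integration by parts on $|\nabla^{\perp}X|^{2}$), and it is correct, with the signs consistent with the paper's convention for $J_{\Sigma}$. The only caveat is the one you already flag: the result as outlined is a sketch, and the credit for a complete proof rests on carrying out the term-by-term bookkeeping, which is standard but not written out here.
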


\begin{definition}
The Morse index of $\Sigma$ is the number of negative eigenvalues of $J_{\Sigma}$ counting multiplicities. We say that $\Sigma$ is stable if it has Morse index $0$, i.e.

\begin{center}
  $\displaystyle-\int_{\Sigma} \langle J_{\Sigma}X,X \rangle d\Sigma \geq 0$, for all $X\in \Gamma(N(M))$. 
\end{center}
\end{definition}
\begin{remark}
The Morse index gives us information about the number of directions in which our submanifold fails to minimize area. \\

\end{remark}

\end{subsection}

\begin{subsection}{NOTATION}
\label{notations}
Let $\bar{M}:=M_{1} \times M_{2}$ and
$p=(p_{1},p_{2})\in \bar{M}$ where $p_{i}$ is a point in $M_{i}$ for $i=1,2$
, and $M_{1}$ and $M_{2}$ are $m_{1}$ and $m_{2}$-dimensional Riemannian manifolds,  respectively. Then, we have the splitting

\begin{center}
$T_{p}(\bar{M})=T_{p_{1}}(M_{1}) \oplus T_{p_{2}}(M_{2})$    
\end{center}
i.e., if $x \in T_{p}(\bar{M})$, we have
    
    \begin{center}
        $x=(x^{1},x^{2})$
    \end{center}
where $x^{1}=P^{1}(x)\in T_{p_{1}}(M_{1})$, $x^{2}=P^{2}(x)\in T_{p_{2}}(M_{2})$ and $P^{1}$ and $P^{2}$ are the projections on $T_{p_{1}}(M_{1})$ and $T_{p_{2}}(M_{2})$ respectively.\\

Let us denote by $\bar{\nabla}, \nabla^{1}$ and  $\nabla^{2}$ the Riemannian connections of $\bar{M}, M_{1}$ and  $M_{2}$  respectively. Then, for $X, Y \in \chi(\bar{M})$,
\begin{equation}\label{SCM1M2}
    \bar{\nabla}_{X}Y(p)= (\nabla^{1}_{X^{1}}Y^{1}(p), \nabla^{2}_{X^{2}}Y^{2}(p)).
    \end{equation}
\end{subsection}

\begin{subsection}{PROJECTIVE SPACES}
\label{section1}

Let $\mathbb{C} P^{\frac{m_{1}}{2}}$ be the complex projective space of real dimension $m_{1}$ with the Fubini-Study metric, and
$\mathbb{H} P^{\frac{m_{1}}{4}}$ be the quaternionic projective space of real dimension $m_{1}$ with the standard metric. Let us consider the composition $\Phi_{1}:=i\circ \phi_{1}$, where $i$ is the inclusion map of $S^{l_{d}}$ in $\mathbb{R}^{{l_{d}}+1}$   and $\phi_{1}$ is the generalized Veronese imbedding 
\begin{center}
$\phi_{1}: \mathbb{K} P^{\frac{m_{1}}{d}} \rightarrow S^{l_{d}}$.
\end{center}
Here, $l_{d}=\frac{m_{1}}{2}(\frac{m_{1}}{d}+1)+\frac{m_{1}}{d}-1$, $\mathbb{K}\in \{\mathbb{C},\mathbb{H}\}$, and $d=dim_{\mathbb{R}}(\mathbb{K})$ (see Section 2 in \cite{sakamoto1977planar}). We will denote $m=l_{d}+1$.\\

The  constant holomorphic sectional curvature of $\mathbb{C} P^{\frac{m_{1}}{2}}$  is given by $\lambda^{2}:=\frac{m_{1}}{\frac{m_{1}}{2}+1}$. Moreover, if $R$ is the curvature tensor of $\mathbb{C} P^{\frac{m_{1}}{2}}$, then
\begin{equation}
\begin{aligned}
\label{fubini}
    \displaystyle \langle R(X,Y)Z,W\rangle = &\frac{\lambda^{2}}{4} \Big \{\langle Y,Z\rangle \langle X,W\rangle-\langle X,Z\rangle \langle Y,W \rangle \\
  & +\langle JY,Z \rangle \langle JX,W \rangle- \langle JX,Z \rangle \langle JY,W\rangle+2 \langle X,JY \rangle \langle JZ,W \rangle \Big \},
    \end{aligned}
\end{equation}
 where $J$ is the complex structure of $\mathbb{C} P^{\frac{m_{1}}{2}}$ (see Equation (1.1) in \cite{ohnita1986stable}). Notice that from Equation (\ref{fubini}), $\lambda^{2}$ is also the maximum of the sectional curvatures on $\mathbb{C} P^{\frac{m_{1}}{2}}$.\\

The maximum of the sectional curvatures on $\mathbb{H} P^{\frac{m_{1}}{2}}$  is given by $\lambda^{2}:=\frac{2m_{1}}{m_{1}+4}$. Moreover, if $R$ is the curvature tensor of $\mathbb{H} P^{\frac{m_{1}}{2}}$, then
 \begin{equation}
    \begin{aligned}
\label{hfubini}
\displaystyle \langle R(X,Y)Z,W\rangle =&\frac{\lambda^{2}}{4} \Big \{\langle Y,Z \rangle \langle X,W \rangle-\langle X,Z \rangle \langle Y,W \rangle  +2\sum_{k=1}^{3}\langle X,J_{k}(Y)\rangle \langle J_{k}(Z),W\rangle \\
    \displaystyle &+\sum_{k=1}^{3}(\langle J_{k}(Y),Z\rangle \langle J_{k}(X),W\rangle-\langle J_{k}(X),Z\rangle \langle J_{k}(Y),W \rangle)\Big \},
    \end{aligned}
\end{equation}
 where $J_{k}$, $k=1,2,3$, is a canonical local basis of the quaternionic Kaehler structure of $\mathbb{H}P^{\frac{m_{1}}{2}}$ (see Equation (1.2) in \cite{ohnita1986stable}).\\

Let $B$ be the second fundamental form of $\Phi_{1}$ (for both cases $\mathbb{K}=\mathbb{C}, \mathbb{H}$). Then, 
\begin{equation}
\begin{aligned}
\label{CCP}
   \displaystyle 3\langle B(X,Y),B(Z,W)\rangle=&\langle R(X,Z)W,Y \rangle+\langle R(X,W)Z,Y\rangle +\lambda^{2}\langle X,Y\rangle \langle Z,W\rangle\\
   \displaystyle &+\lambda^{2} \langle X,W\rangle \langle Y,Z\rangle+\lambda^{2}\langle X,Z\rangle \langle W,Y\rangle,
   \end{aligned}
\end{equation}
where  $X,Y,Z,W \in T_{q}\mathbb{K} P^{\frac{m_{1}}{d}}$, $q\in \mathbb{K} P^{\frac{m_{1}}{d}}$ (see Equation (3.10) in \cite{ohnita1986stable}). 

\end{subsection}

\begin{subsection}{RIEMANNIAN SUBMERSIONS}
\label{RS}

In this subsection we prove an important lemma (Lemma \ref{lemmafact}) which is used in the proof of classification theorems of compact stable minimal immersions of codimension $1$
in $\mathbb{C}P^{\frac{m_{1}}{2}}\times M$ (Theorem \ref{cod1c})  and codimension $1$ and $2$ in $\mathbb{H}P^{\frac{m_{1}}{4}}\times M$ (Theorems \ref{cod1h} and \ref{hz1}). To prove Lemma \ref{lemmafact}, we need some definitions and some important theorems found in  O'Neill \cite{o1966fundamental} and Hermann \cite{hermann1960sufficient}:

\begin{proposition}\cite{hermann1960sufficient}
\label{hermann}
Let $\Pi:M\rightarrow B$ be an  onto Riemannian submersion. If $M$ is complete, so is $B$. In particular, if $\sigma:[a,b]\rightarrow B$ is a geodesic segment in $B$, then for each point  $m\in M$ with $\Pi(m)\in \sigma(a)$, there exists a unique horizontal lift $\sigma^{m}:[a,b]\rightarrow M$ of $\sigma$ such that,
\begin{center}
$\sigma^{m}(a)=m$ and $\sigma^{m}$ is also geodesic.     
\end{center}
Now let $\phi_{\sigma}:\Pi^{-1}(\sigma(a))\rightarrow \Pi^{-1}(\sigma(b))$  be the function given by $\phi_{\sigma}(m)=\sigma^{m}(b)$ (see Fig.\ref{cube}). This function $\phi_{\sigma}$ is a diffeomorphism. If the fibers of $\Pi$  are totally geodesic submanifolds, then $\phi_{\sigma}$ is an isometry and $\Pi$ is a fibre bundle.  
\end{proposition}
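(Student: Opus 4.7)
The plan is to build the proof in four stages using the standard horizontal/vertical decomposition $TM = \mathcal{H}M \oplus \mathcal{V}M$ where $\mathcal{V}M = \ker d\Pi$ and $\mathcal{H}M$ is its orthogonal complement. First I would establish existence and uniqueness of the horizontal lift: for any smooth curve $\sigma:[a,b]\to B$ and $m\in \Pi^{-1}(\sigma(a))$, a horizontal lift $\sigma^m$ is the unique curve with $\Pi\circ\sigma^m=\sigma$, $\sigma^m(a)=m$, and $\dot\sigma^m(t)\in \mathcal{H}_{\sigma^m(t)}M$. Locally this reduces to an ODE for the horizontal lift of $\dot\sigma$, so short-time existence and smooth dependence on $m$ are automatic.

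Next I would show that if $\sigma$ is a geodesic in $B$, then $\sigma^m$ is a geodesic in $M$. This is the standard O'Neill computation: since $d\Pi$ is an isometry on $\mathcal{H}M$ and basic horizontal vector fields project to vector fields on $B$, the Levi-Civita connections satisfy $(\bar\nabla_X Y)^{\mathcal{H}} = \widetilde{\nabla^B_{d\Pi(X)} d\Pi(Y)}$ for basic horizontal $X,Y$, so $\bar\nabla_{\dot\sigma^m}\dot\sigma^m$ is purely vertical; combined with a short calculation showing the vertical part vanishes for horizontal vector fields of constant length along the lift, one concludes $\sigma^m$ is a geodesic. Completeness of $B$ then follows from Hopf--Rinow: any unit-speed geodesic in $B$ has a horizontal lift which, by completeness of $M$, extends to all of $\mathbb{R}$, and its projection extends the original geodesic. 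Consequently horizontal lifts exist on all of $[a,b]$, and the map $\phi_\sigma:\Pi^{-1}(\sigma(a))\to \Pi^{-1}(\sigma(b))$, $\phi_\sigma(m)=\sigma^m(b)$, is well-defined and smooth by smooth dependence of ODE solutions on initial conditions; its smooth inverse is $\phi_{\bar\sigma}$, where $\bar\sigma(t)=\sigma(a+b-t)$, yielding the diffeomorphism claim.

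The harder step is the isometry assertion under the totally geodesic fiber hypothesis. Given $V\in T_m \Pi^{-1}(\sigma(a))$ vertical, I would choose a smooth curve $m(s)$ in the fiber with $m(0)=m$, $\dot m(0)=V$, lift each $\sigma^{m(s)}$ horizontally, and let $J(t)=\partial_s\big|_{s=0}\sigma^{m(s)}(t)$; this is a vertical Jacobi-type variation field along $\sigma^m$. The key identity is that the time derivative of $|J(t)|^2$ is controlled by O'Neill's $T$-tensor, which measures the second fundamental form of the fibers and vanishes identically when fibers are totally geodesic. A short computation using $T\equiv 0$ then shows $|J(t)|$ is constant along $\sigma^m$, so $d\phi_\sigma$ is a linear isometry on each tangent fiber, hence $\phi_\sigma$ is an isometry. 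The main obstacle I anticipate is carefully justifying that $J$ remains vertical along the entire lift and correctly identifying the evolution of its norm in terms of $T$; this requires the O'Neill tensor apparatus but is otherwise routine.

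For the bundle structure, I would fix a point $b_0\in B$, choose a normal convex ball $U$ centered at $b_0$, and define a local trivialization $\Psi:U\times \Pi^{-1}(b_0)\to \Pi^{-1}(U)$ by $\Psi(b,m)=\phi_{\sigma_b}(m)$, where $\sigma_b$ is the unique radial geodesic from $b_0$ to $b$ in $U$. Smoothness of $\Psi$ and of its inverse follows from smooth dependence of geodesic flow on initial data, and on each slice $\{b\}\times \Pi^{-1}(b_0)$ the map $\Psi$ is the isometry $\phi_{\sigma_b}$ onto the fiber over $b$; this furnishes local trivializations of $\Pi$ over a neighborhood of every base point, so $\Pi$ is a fiber bundle.
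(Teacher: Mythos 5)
The paper does not prove this proposition at all: it is quoted verbatim as a known result of Hermann (with the diffeomorphism/isometry statements as in O'Neill), so there is no in-paper argument to compare against. Judged on its own, your outline is a correct reconstruction of Hermann's original proof: horizontal lifts via the ODE for the horizontal distribution, the O'Neill identity $(\bar\nabla_X Y)^{\mathcal H}=\widetilde{\nabla^B_{d\Pi(X)}d\Pi(Y)}$ together with $A_XX=\tfrac12[X,X]^{\mathcal V}=0$ to see that lifts of geodesics are geodesics, the variation field $J(t)=\partial_s|_{s=0}\sigma^{m(s)}(t)$ with $\tfrac{d}{dt}|J|^2=2\langle\nabla_J\dot\gamma,J\rangle=-2\langle\dot\gamma,T_JJ\rangle=0$ when the fibers are totally geodesic, and the radial-geodesic trivialization over a convex ball. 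Two remarks on the details you flag. First, the step you describe loosely as "the lift extends by completeness of $M$" needs the standard open--closed argument: the geodesic of $M$ with horizontal initial velocity is defined for all time by completeness, and the set of parameters where it is horizontal and projects to a geodesic of $B$ is open (by local uniqueness of geodesics against the lift of the projected geodesic) and closed, hence everything; this is what simultaneously yields completeness of $B$ and global existence of the lift, and it should not be elided. Second, the obstacle you anticipate about $J$ staying vertical is not actually an obstacle: since $\Pi(\sigma^{m(s)}(t))=\sigma(t)$ for every $s$, one has $d\Pi(J(t))=0$ identically, so verticality is automatic and only the $T\equiv 0$ computation is needed. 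With those two points made precise, the proof is complete; note also that your trivialization argument never uses the totally geodesic hypothesis, consistent with Hermann's stronger theorem that any Riemannian submersion from a complete manifold is a fibre bundle.
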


\begin{figure}
\centering
\includegraphics[scale=0.3]{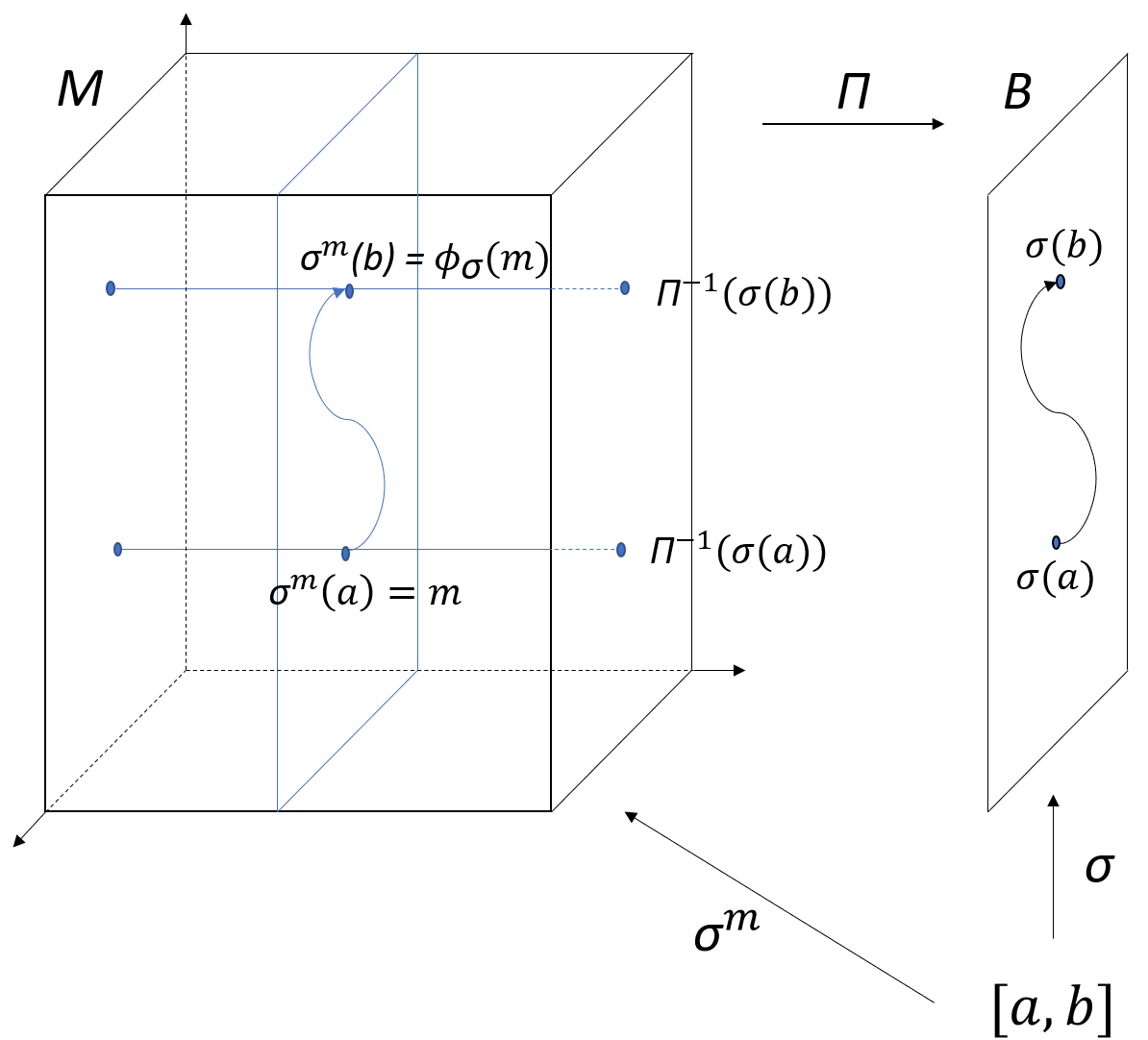}
\caption{\hspace{1cm}}
\label{cube}
\end{figure}

\begin{remark}
\label{lift}
Notice that since $\sigma^{m}$ is a lift of $\sigma$, then 
\begin{equation}
    \sigma(c)=\Pi(\sigma^{m}(c)),
\end{equation}
for all $c\in [a,b]$.
\end{remark}

\begin{definition} \cite{o1966fundamental}
Let $\Pi:M\rightarrow B$ be an  onto Riemannian submersion and $M$ complete. Fixed a point $o\in B$. The group of the submersion $\Pi$, $G_{\Pi}$, is given by

\begin{center}
    $G_{\Pi}:= \{\phi_{\sigma}:\Pi^{-1}(o)\rightarrow \Pi^{-1}(o); \sigma \text{\hspace{0.01cm} is a geodesic loop at \hspace{0.01cm}} o \}$,
\end{center}
with the composition of functions as the group operation.
\end{definition}

\begin{definition}
Let $\Pi:M\rightarrow B$ be a Riemmanian submersion. We say that $\Pi$ is a trivial submersion if $M=F\times B$, where $F$ is a Riemannian manifold, and $\Pi(f,b)=b$ is the projection of the Riemnnian product $F\times B$ onto the factor $B$, where $f\in F$ and $b\in B$.
\end{definition}
\begin{proposition}\cite{o1966fundamental}
\label{oneill}
Let $\Pi:M\rightarrow B$ be a onto Riemannian submersion of a complete Riemannian manifold $M$. Then, $\Pi$ is trivial if and only if the fibers of $\Pi$ are totally geodesic and the group of the submersion vanishes. 
\end{proposition}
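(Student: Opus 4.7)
The plan is to prove the two directions separately. The forward implication is nearly immediate: if $M = F \times B$ with the product metric and $\Pi = \mathrm{pr}_B$, the fibers $F \times \{b\}$ are totally geodesic, and geodesics in a Riemannian product split as pairs of geodesics in the two factors. Hence for any geodesic loop $\sigma$ at $o \in B$, the horizontal lift through $(f, o)$ is $t \mapsto (f, \sigma(t))$, which returns to $(f, o)$. Thus $\phi_\sigma = \mathrm{id}_F$ for every geodesic loop, and $G_\Pi = \{\mathrm{id}\}$.

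For the converse, I would first invoke Proposition \ref{hermann}, which together with the totally-geodesic-fiber hypothesis makes $\Pi$ a fiber bundle with every $\phi_\sigma$ an isometry and forces $B$ complete. Writing $F := \Pi^{-1}(o)$, the goal is to construct an isometry
\begin{equation*}
\Psi : F \times B \to M, \qquad \Psi(f, b) := \phi_{\sigma_b}(f),
\end{equation*}
where $\sigma_b : [0, 1] \to B$ is any geodesic from $o$ to $b$ (available by completeness of $B$). Once $\Psi$ is shown to be a well-defined isometry with $\Pi \circ \Psi = \mathrm{pr}_B$, the triviality of $\Pi$ follows directly.

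Well-definedness is where the vanishing of $G_\Pi$ enters. If $\sigma, \tau$ are two geodesics from $o$ to $b$, the concatenation $\tau^{-1} * \sigma$ is a piecewise geodesic loop at $o$, and its holonomy $\phi_\tau^{-1} \circ \phi_\sigma$ lies in $G_\Pi$ viewed as the subgroup generated by single-geodesic-loop transformations under composition; triviality then forces $\phi_\sigma = \phi_\tau$. Smoothness of $\Psi$ follows locally by taking $\sigma_b$ to be $\exp_o$-radial on a normal neighborhood, together with smooth dependence of the horizontal-lift ODE on initial data, and propagates globally by the just-established well-definedness. The isometry property splits into two pieces: $\Psi(\cdot, b) = \phi_{\sigma_b}$ is an isometry $F \to \Pi^{-1}(b)$ by Proposition \ref{hermann}, while $\Psi(f, \cdot)$ sends each $B$-geodesic from $o$ to its horizontal lift through $f$, a curve of the same length. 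Orthogonality of horizontal and vertical distributions, in both $M$ and $F\times B$, then assembles these partial isometries into a global isometry.

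The main obstacle is precisely the well-definedness of $\Psi$. A Riemannian submersion with totally geodesic fibers always admits a local product structure on a tube around each fiber, but upgrading this to a global splitting requires horizontal holonomy to be path-independent, which is exactly the content of $G_\Pi = \{\mathrm{id}\}$, once interpreted as triviality of holonomy around arbitrary piecewise geodesic loops (the natural closure of the paper's definition under composition).
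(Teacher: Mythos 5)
This proposition is one the paper does not prove: it is quoted from O'Neill \cite{o1966fundamental} and used as a black box (in the proof of Lemma \ref{lemmafact}), so there is no in-paper argument to compare yours against. Judged on its own, your outline is essentially the standard proof of O'Neill's result and is sound. The forward direction is correct as stated. For the converse, the construction $\Psi(f,b)=\phi_{\sigma_b}(f)$ is the right idea, and you correctly identify the crux: well-definedness requires that the holonomy $\phi_\tau^{-1}\circ\phi_\sigma$ of the \emph{broken} geodesic loop $\tau^{-1}*\sigma$ be trivial, which is only literally guaranteed if $G_\Pi$ is understood to be built from broken geodesic loops (as in O'Neill's original definition) rather than the single geodesic loops of the paper's Definition; your parenthetical remark on this point is the correct reading, and note that a two-segment loop $o\to b\to o$ is not obviously a product of single-geodesic-loop holonomies, so this is a necessary reinterpretation, not merely a cosmetic one.

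The one step I would press you to make explicit is the horizontal part of the isometry claim. You verify that $\Psi(f,\cdot)$ carries radial geodesics from $o$ to horizontal lifts, but to conclude that $d\Psi_{(f,b)}(0,w)$ is the horizontal lift of $w$ for \emph{every} $b$ and every $w\in T_bB$ (not just radial directions from $o$), you must re-base: represent $\Psi(f,b')$ for $b'$ near $b$ as $\phi_{\rho}(\Psi(f,b))$ with $\rho$ the radial geodesic from $b$ to $b'$, which is legitimate only after well-definedness over broken geodesic paths is in hand. This is also exactly where one sees that trivial holonomy forces the leaves $\Psi(\{f\}\times B)$ to be integral manifolds of the horizontal distribution — integrability is \emph{not} a hypothesis of the proposition, so it must be derived, and your sketch currently assumes it implicitly when asserting that the horizontal and vertical pieces ``assemble'' into an isometry. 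With that step spelled out (and bijectivity of $\Psi$, which follows from Proposition \ref{hermann} plus applying $\Pi$), the argument is complete.
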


Given the technicality of the proof of Lemmas \ref{lemmafact} and \ref{lemmafact2}, the reader may wish to skip them on a first reading and continue to Section \ref{complexcompu}.

\begin{lemma}
\label{lemmafact}
Let $M$ be a complete Riemannian manifold, B be a simply connected Riemannian manifold and $\Pi:M\rightarrow B $ an onto Riemannian submersion such that the fibers of $\Pi$ are totally geodesic and the horizontal distribution is integrable. Then, $\Pi$ is the trivial submersion.
\end{lemma}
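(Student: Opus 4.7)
The plan is to apply Proposition \ref{oneill}, which reduces the problem to showing that the group $G_{\Pi}$ is trivial (the totally geodesic hypothesis on fibers is already assumed). So I need to verify that $\phi_{\sigma}=Id$ on $\Pi^{-1}(o)$ for every geodesic loop $\sigma$ at a fixed base point $o\in B$.

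The first step is to exploit integrability of the horizontal distribution $\mathcal{H}$. By the Frobenius theorem, $M$ is foliated by horizontal leaves; for each $m\in M$, let $L_{m}$ denote the maximal leaf through $m$. Since $\Pi$ is a Riemannian submersion, horizontal vectors project isometrically, so $\Pi|_{L_{m}}:L_{m}\rightarrow B$ is a local isometry.

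The second step is to upgrade $\Pi|_{L_{m}}$ to a Riemannian covering. The key input is Proposition \ref{hermann}: given any $m'\in L_{m}$ and any geodesic $\gamma:[a,b]\rightarrow B$ with $\gamma(a)=\Pi(m')$, completeness of $M$ provides a horizontal lift $\gamma^{m'}$ starting at $m'$ which is itself a geodesic in $M$. Since $\gamma^{m'}$ is horizontal at every point, it stays in the leaf $L_{m}$. Thus every geodesic of $B$ lifts to $L_{m}$ from any preimage point, which together with the local isometry property implies that $\Pi|_{L_{m}}$ is a Riemannian covering map. Because $B$ is simply connected, this covering is in fact a global Riemannian isometry, so in particular a bijection.

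Triviality of $G_{\Pi}$ is then immediate. For any geodesic loop $\sigma:[a,b]\rightarrow B$ at $o$ and any $m\in\Pi^{-1}(o)$, the horizontal lift $\sigma^{m}$ remains inside $L_{m}$, so its endpoint $\sigma^{m}(b)$ lies in $L_{m}\cap\Pi^{-1}(o)$. But $\Pi|_{L_{m}}$ is a bijection onto $B$, hence this intersection reduces to $\{m\}$. Therefore $\phi_{\sigma}(m)=\sigma^{m}(b)=m$ for every $m$, i.e.\ $\phi_{\sigma}=Id$. Applying Proposition \ref{oneill} yields that $\Pi$ is a trivial submersion.

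The main obstacle I anticipate is the second step: making rigorous the claim that each leaf is metrically complete enough for $\Pi|_{L_{m}}$ to qualify as a covering, since leaves of a general foliation need not inherit completeness from the ambient manifold. The geodesic lifting furnished by Proposition \ref{hermann} is precisely what resolves this, and simple connectedness of $B$ then promotes the cover to an isometry, from which the vanishing of $G_{\Pi}$ follows with no further computation.
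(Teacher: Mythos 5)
Your proposal is correct and follows essentially the same route as the paper: reduce via Proposition \ref{oneill} to showing $G_{\Pi}$ is trivial, observe that horizontal lifts of geodesics stay in the maximal horizontal leaf through $m$, show that $\Pi$ restricted to that leaf is a covering of the simply connected $B$ and hence injective, and conclude that every geodesic loop lifts to a closed curve so that $\phi_{\sigma}=Id$. The only difference is in the one technical step you flagged: the paper establishes the covering property by an explicit verification with local trivializations $U\times F$ coming from the fiber bundle structure (its Lemma \ref{lemmafact2}), whereas you invoke the standard criterion that a local isometry with the geodesic-lifting property (supplied here by Proposition \ref{hermann}) is a covering map, which is a legitimate and somewhat cleaner way to close that gap.
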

\begin{proof}

Using Proposition \ref{oneill}, it is enough to prove that $G_{\Pi}$ is trivial. For a fixed element $o\in B$, let $\sigma:[a,b]\rightarrow B$ be a geodesic loop at $o$, i.e., $\sigma(a)=\sigma(b)=o$.  Then, we will prove that
\begin{center}

$\phi_{\sigma}:\Pi^{-1}(o) \rightarrow \Pi^{-1}(o)$ 
\end{center}
is the identity function. For $m\in \Pi^{-1}(\sigma(a))$, there exists a unique horizontal lift $\sigma^{m}:[a,b]\rightarrow M$ of $\sigma$ such that $\sigma^{m}(a)=m$. Moreover $\sigma^{m}$ is also geodesic.

 Since the horizontal distribution is integrable,  through $m\in M$, there passes a unique maximal connected integral manifold of the horizontal distribution, denoted by $(N,\varphi)$. Now we use the following lemma that will be proved at the end:

\begin{lemma}
\label{lemmafact2}
Under the conditions of Lemma \ref{lemmafact},
the restriction

\begin{center}
$\Pi|_{\varphi(N)}:(\varphi(N),\tau_{N})\rightarrow B$    
\end{center}
is a homeomorphism, where $\tau_{N}$ is the topology induced by $(N,\varphi)$, i.e., such that $\varphi:N\rightarrow (\varphi(N),\tau_{N})$ is a continuous map (notice that $\varphi:N\rightarrow (\varphi(N),\tau_{N})$ is an open map because $\varphi$ is 1-1).  

\end{lemma}

We have $\sigma^{m}([a,b])\subset \varphi(N)$ because  $\sigma^{m}$ is horizontal and $\sigma^{m}([a,b])$ is connected. 

Since $\sigma(a)=\sigma(b)$, we have that $\Pi(\sigma^{m}(a))=\Pi(\sigma^{m}(b))$, by  Remark \ref{lift}. Consequently,  $\Pi|_{\varphi(N)}(\sigma^{m}(a))=\Pi|_{\varphi(N)}(\sigma^{m}(b))$. 
By Lemma \ref{lemmafact2}, $\sigma^{m}(a)=\sigma^{m}(b)$ and thus,

\begin{center}
    $\phi_{\sigma}(m)=\sigma^{m}(b)=\sigma^{m}(a)=m.$
\end{center}
Since $m$ was arbitrary, $\phi_{\sigma}$ is the identity. This proves the  Lemma \ref{lemmafact}.\\

\end{proof}
Now, let us prove \textbf{ Lemma \ref{lemmafact2}}:
\begin{proof}
 Let us denote $\pi:=\Pi|_{\varphi(N)}$. First, we will prove that $\pi$ is a covering map of $B$. 
\begin{itemize}
    \item \textbf{$\pi$ is continuous}. Let $W$ be a open set in $B$. Since $\Pi\circ\varphi$ is continuous, 
    
    \begin{center}
        $(\Pi\circ\varphi)^{-1}(W)=\varphi^{-1}(\Pi^{-1}(W))=\varphi^{-1}(\varphi(N)\cap \Pi^{-1}(W))$\\
    \end{center}
    is an open set in $N$. 
    Therefore, $\varphi(N)\cap \Pi^{-1}(W)=\pi^{-1}(W)$ is open in $(\varphi(N),\tau_{N})$. 
    
    \item \textbf{$\pi$ is onto}.
    Let $b\in B$, $n\in \varphi (N)$ and  $q:=\Pi(n)$. Since $B$ is arcwise connected, there is a geodesic segment $\gamma: [0,1]\rightarrow B$ with $\gamma(0)=q$ and $\gamma(1)=b$. Consequently, there exists a unique horizontal lift $\gamma^{n}:[0,1]\rightarrow M$ of $\gamma$ such that 
    
    \begin{center}
        $\gamma^{n}(0)=n$\\
        $\gamma^{n}(1)\in \Pi^{-1}(b)$.\\
    \end{center}
    We have that $\gamma ^{n}([0,1])\subset \varphi(N)$, because $\gamma^{n}$ is horizontal, $\gamma^{n}(0)\in \varphi(N)$, and $\gamma^{n}([0,1])$ is connected.
    Therefore,  $b=\Pi(\gamma^{n}(1))=\pi(\gamma^{n}(1))$, for $\gamma^{n}(1)\in \varphi(N)$. Since $b$ was arbitrary, $\pi$ is onto.

    \item \textbf{Disjoint union.}  Let $x\in B$.
    By Proposition \ref{hermann} in \cite{hermann1960sufficient}, $\Pi$ is a fiber bundle. Since the horizonal distribution is integrable, there exists a connected open set $U$ of $B$ containing $x$, such that 
    
    \begin{equation}
    \Pi^{-1}(U)=U\times F  
    \end{equation}
    \begin{equation}
    \label{map}
        \Pi(u,f)=u,
    \end{equation}
for $(u,f)\in U\times F$, where $F$ is a typical fiber of $\Pi$ ($F=\Pi^{-1}(b)$, for some $b\in B$). Thus,\\

\begin{center}
    $\displaystyle \pi^{-1}(U)=\Pi^{-1}(U)\cap \varphi(N)=\bigcup_{f\in F}(U\times \{f\})\cap \varphi(N)$.
\end{center}
It is evident that the last union is disjoint.  Let us consider the following set,
\begin{center}
 $\displaystyle F':=\{f\in F: (U\times \{f\})\cap \varphi(N)\neq \emptyset \}$.
 \end{center}
 Notice that $F'\neq \emptyset$, because $\pi$ is onto, and then $\pi^{-1}(U)\neq \emptyset$.
For $f\in F'$, we have that $U\times \{f\}\subset \varphi(N)$, because $U\times \{f\}$ is a connected integral manifold of the horizontal distribution, and $(U\times \{f\})\cap \varphi(N)\neq \emptyset$. Therefore,

\begin{center}
    $\displaystyle \pi^{-1}(U)=\bigcup_{f\in F'}U\times \{f\}$.
\end{center}

\item \textbf{The set $U\times \{f\}$, for $f\in F'$ is an open set of $(\varphi(N),\tau_{N})$}. Notice that $(U\times \{f\},\tau_{u})$ is a topological manifold with smooth structure such that $i:U\times \{f\}\rightarrow M$ is an smooth embedding, where $\tau_{u}$ is the subspace topology induced by $M$ (equivalently induced by $U\times F$). By Theorem 1.62 in \cite{warner2013foundations}, there exists a unique $C^{\infty}$ map $\bar{i}:(U\times\{f\},\tau_{u})\rightarrow N$ such that $\varphi \circ \bar{i} =i$, i.e. the diagram in Fig. \ref{diagram}  commutes. 
\begin{figure}
\begin{tikzcd}
(U\times \{f\},\tau_{u}) \arrow[rd, "\bar{i}"] \arrow[r, "i"] & M \\
& N \arrow[u, "\varphi"]
\end{tikzcd}
\caption{Commutative diagram.}
\label{diagram}
\end{figure}

The map $\bar{i}:(U\times \{f\},\tau_{u})\rightarrow N$ is nonsingular, because $i:U\times\{f\}\rightarrow M$ is nonsingular. Applying the inverse function theorem (see Proposition 5.16 in \cite{lee2013smooth}) to the $C^{\infty}$ function $\bar{i}:(U\times\{f\},\tau_{u})\rightarrow N$, we have that $\bar{i}$ is a local diffeomorphism. Therefore, $\bar{i}$ is an open map, and then 
$\bar{i}(U\times \{f\})$ is an open set of $N$. Since $\varphi:N\rightarrow (\varphi(N),\tau_{N})$ is an open map, it follows that $\varphi(\bar{i}(U\times \{f\}))=i(U\times \{f\})=U\times \{f\}$ is open in ($\varphi(N)$,$\tau_{N}$).  

\item \textbf{The mapping, 
\begin{equation}
\label{homeo}
\displaystyle \pi|_{U\times\{f\}}:(U\times\{f\},\tau_{\varphi(N)})\rightarrow B 
\end{equation}
is a homeomorphism, where $\tau_{\varphi(N)}$ is the subspace topology in $U\times\{f\}$  induced by $(\varphi(N),\tau_{N})$}. Notice that from Equation (\ref{map}) the mapping,
\begin{center}
     $\displaystyle \pi|_{U\times\{f\}}:(U\times\{f\},\tau_{u})\rightarrow U$
\end{center}
is an homeomorphism. Therefore, it is enough to prove that $\tau_{u}=\tau_{\varphi(N)}$.\\

Let $W\in \tau_{u}$. The set $\varphi(\bar{i}(W))=i(W)=W$ is open in $(\varphi(N),\tau_{N})$, because $\bar{i}$ and $\varphi$ are open maps. Since $W=W\cap (U\times \{f\})$, we have that $W\in \tau_{\varphi(N)}$. \\

Now, let $W\in \tau_{\varphi(N)}$. There exists an open set $\omega$ in $(\varphi(N),\tau_{N})$ such that $W=(U\times \{f\})\cap \omega$.
Since $U\times \{f\}$ is open in $(\varphi(N),\tau_{N})$, it follows that $W$ is an open set of $(\varphi(N),\tau_{N})$, and therefore $\varphi^{-1}(W)$ is open in $N$. 
Thus, $\bar{i}^{-1}(\varphi^{-1}(W))\in \tau_{u}$, because $\bar{i}$ is continuous. From the following equality,
\begin{center}
$\displaystyle \bar{i}^{-1}(\varphi^{-1}(W))=(\varphi\circ\bar{i})^{-1}(W)=i^{-1}(W)=W$,
\end{center}
we conclude that $W\in \tau_{u}$.
\end{itemize}
 Until now, we have proved that $\Pi|_{\varphi(N)}:(\varphi(N),\tau_{N})\rightarrow B$ is a covering map of $B$. Since $B$ is simply connected and applying exercise 6.1, Chapter 5 in  \cite{massey1967algebraic}, we have that the map $\Pi|_{\varphi(N)}:(\varphi(N),\tau_{N})\rightarrow B$ is an homeomorphism.
\end{proof}

\end{subsection}

\section{MINIMAL STABLE SUBMANIFOLDS IN $\mathbb{C}P^{\frac{m_{1}}{2}}\times M$}
\label{complexcompu}

\begin{subsection}{GENERAL FORMULA}
\label{gfc}
In this subsection we prove Lemma \ref{GENERALC}, which is fundamental for the development of main theorems in Subsections \ref{CC1}, \ref{CC2}, \ref{CD2}.\\

Let $\Phi=(\psi,\phi):\Sigma \rightarrow \bar{M}:=\mathbb{C}P^{\frac{m_{1}}{2}}\times M$ be a  compact minimal immersion of codimension $d$ and dimension $n$, where $M$ is any Riemannian manifold of dimension $m_{2}$ and $\Phi_{1}:\mathbb{C}P^{\frac{m_{1}}{2}}\rightarrow \mathbb{R}^{m}$ is the immersion described in Section \ref{section1}. For each $v\in \mathbb{R}^{m}$ let us consider the following:
\begin{center}
    $\nu:=(v,0)\in T(\mathbb{R}^{m}\times M)$\\
    $N_{v}:=[\nu]^{N}$,
\end{center}
where $[.]^{N}$ is  projection in the orthogonal complement, $N_{p}\Sigma$, of $T_{p}\Sigma$ in $T_{\Phi(p)}\bar{M}$, $p\in \Sigma$.

\begin{lemma}
\label{GENERALC}
Let $p\in \Sigma$, $\{e_{1},\ldots,e_{n}\}$
be an orthonormal basis of  $T_{p}\Sigma$, $\{\eta_{1},\ldots,\eta_{d}\}$ be an orthonormal basis of  $N_{p}\Sigma$, and $\{E_{1},\ldots,E_{m}\}$ be the usual canonical basis of $\mathbb{R}^{m}$. Then,
\begin{equation}
\label{MAIN}
\displaystyle\sum_{A=1}^{m} -\langle N_{E_{A}},J_{\Sigma}(N_{E_{A}})\rangle=\lambda^{2}\Big (\sum_{k=1}^{d}\sum_{l=1}^{d}\langle J(\eta^{1}_{k}),\eta^{1}_{l}\rangle^{2}-\langle \eta^{1}_{k},\eta^{1}_{l}\rangle^{2}\Big )
\end{equation}
\begin{equation}
    \label{MAIN2}\hspace{3.6cm}=\lambda^{2} \Big (\sum_{j=1}^{n}\sum_{i=1}^{n}
\langle J(e^{1}_{j}),e_{i}^{1}\rangle^{2}-\langle e^{1}_{j},e^{1}_{i}\rangle^{2}\Big ).
\end{equation}
\end{lemma}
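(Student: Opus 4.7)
The plan is to compute $\langle N_{E_A}, J_\Sigma N_{E_A}\rangle$ pointwise at $p\in\Sigma$ and then sum over $A$, exploiting the completeness relation
\begin{equation*}
\sum_{A=1}^{m}\langle v,E_A\rangle\langle E_A,w\rangle=\langle v,w\rangle_{\mathbb{R}^m},\qquad v,w\in\mathbb{R}^m,
\end{equation*}
to eliminate the $E_A$'s. The basic identity behind everything is
$$N_v=\sum_{k=1}^{d}\langle v,\eta_k^1\rangle\,\eta_k,$$
which follows from $\nu=(v,0)$, $\eta_k=(\eta_k^1,\eta_k^2)$, and the fact that $\Phi_1$ isometrically realises $T\mathbb{C}P^{\frac{m_1}{2}}\subset\mathbb{R}^m$, so that $\langle\nu,\eta_k\rangle_{\bar M}=\langle v,\eta_k^1\rangle_{\mathbb{R}^m}$.

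Substituting this expression into the second fundamental form and curvature pieces of $J_\Sigma$ and summing over $A$ gives, respectively,
\begin{gather*}
\sum_{i,j,k,l}\langle\eta_k^1,\eta_l^1\rangle\langle B(e_i,e_j),\eta_k\rangle\langle B(e_i,e_j),\eta_l\rangle,\\
\sum_{i,k,l}\langle\eta_k^1,\eta_l^1\rangle\langle R^{\bar M}(\eta_k,e_i)e_i,\eta_l\rangle.
\end{gather*}
Using (\ref{SCM1M2}), the curvature $R^{\bar M}$ splits into $R^{\mathbb{C}P}$ acting on first components and $R^M$ acting on second components. For the normal Laplacian, I extend $\{\eta_k\}$ so that $\nabla^\perp \eta_k|_p=0$; since $E_A$ is parallel in $\mathbb{R}^m$,
$X\langle E_A,\eta_k^1\rangle=\langle E_A,\,\nabla^{\mathbb{C}P}_{X^1}\eta_k^1+B_1(X^1,\eta_k^1)\rangle,$
where $B_1$ is the second fundamental form of the Veronese imbedding $\Phi_1$. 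Iterating to second order, tracing over the orthonormal basis $\{e_i\}$, and then summing over $A$ expresses $\sum_A\langle N_{E_A},\Delta^\perp N_{E_A}\rangle$ entirely in terms of inner products among $\eta_k^1$'s, $e_i^1$'s, values of $B_1$, and values of the shape operator of $\Sigma$ in $\bar M$.

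The computation is then closed in two moves. First, every $|B_1|^2$-type term is replaced, via the Veronese identity (\ref{CCP}), by a combination of $R^{\mathbb{C}P}$ and pure-metric terms; applying the Fubini--Study formula (\ref{fubini}) to all resulting curvature expressions collapses them into multiples of $\langle J\eta_k^1,\eta_l^1\rangle^2$, $\langle\eta_k^1,\eta_l^1\rangle^2$ and pure-metric terms. The metric pieces then cancel, the $R^M$ contributions produced by the curvature and Laplacian terms cancel against each other, and minimality $\sum_i B(e_i,e_i)=0$ kills the mean-curvature type terms, leaving exactly (\ref{MAIN}) with coefficient $\lambda^2$. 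The equivalence with (\ref{MAIN2}) is then a short linear-algebra identity: applying the orthonormal expansion in $T\bar M$ to $(w,0)$ for $w\in T\mathbb{C}P^{\frac{m_1}{2}}$ gives $|w|^2=\sum_i\langle w,e_i^1\rangle^2+\sum_k\langle w,\eta_k^1\rangle^2$; specialising to $w=e_j^1, Je_j^1,\eta_k^1,J\eta_k^1$, summing, and using $|Jw|=|w|$ with the skew-adjointness $\langle Ju,v\rangle=-\langle u,Jv\rangle$ yields the required equality.

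The main obstacle is the normal Laplacian step: one must carefully track every second derivative of $\eta_k^1=P^1\eta_k$, including cross-terms between the shape operator $A_{\eta_k}$ of $\Sigma\subset\bar M$, the tensor $B_1$, and the derivatives of the projection $P^1$ itself, and then verify that after summation in $A$ and one application of (\ref{CCP}) no spurious shape-operator contributions survive. Once that bookkeeping stabilises, (\ref{fubini}) delivers the final answer in the compact form stated.
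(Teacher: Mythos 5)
Your proposal follows essentially the same route as the paper's proof: write $N_{E_A}=\sum_k\langle E_A,\eta_k^1\rangle\eta_k$, sum over the canonical basis to eliminate the $E_A$'s, reduce everything to the second fundamental form of the Veronese imbedding, then apply (\ref{CCP}) and (\ref{fubini}), and finally use the orthonormal expansion of $(w,0)$ in $T_{\Phi(p)}\bar M$ for $w=\eta_k^1,J\eta_k^1,e_j^1,Je_j^1$ — this last linear-algebra step is verbatim what the paper does to pass from its intermediate formula to (\ref{MAIN}) and (\ref{MAIN2}). The one point of divergence is that the paper does not redo the normal-Laplacian/curvature/shape-operator bookkeeping that you flag as the main obstacle: it imports the identity
\[
\sum_{A=1}^{m}-\langle N_{E_A},J_\Sigma(N_{E_A})\rangle=\sum_{j=1}^{n}\sum_{k=1}^{d}2|B(e_j^1,\eta_k^1)|^2-\langle B(\eta_k^1,\eta_k^1),B(e_j^1,e_j^1)\rangle
\]
directly as Equation (2.8) of Chen--Wang \cite{chen2013stable}, with $B$ the second fundamental form of $\Phi_1$. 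Your sketch of how to derive that identity is structurally the standard Lawson--Simons computation, and the cancellations you assert (of the $R^M$ contributions against each other and of the mean-curvature terms via minimality) do occur; but as written they are asserted rather than verified, so to make the argument self-contained you would need to carry that computation through in full — or simply cite \cite{chen2013stable} at that point, as the paper does, after which the remainder of your argument closes exactly as in the paper.
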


\begin{proof}
Recall that $R$ is the curvature tensor of $\mathbb{C}P^{\frac{m_{1}}{2}}$ and $B$ is the second fundamental form of $\mathbb{C}P^{\frac{m_{1}}{2}}$ in $\mathbb{R}^{m}$ (see Subsection 1.2).
 By Equation (2.8) in \cite{chen2013stable}, we have the following

\begin{equation}
\label{mainequation}
    \sum_{A=1}^{m} -\langle N_{E_{A}},J_{\Sigma}(N_{E_{A}})\rangle=\sum_{j=1}^{n} \sum_{k=1}^{d}2|B(e_{j}^{1},\eta_{k}^{1})|^{2}-\langle B(\eta_{k}^{1},\eta_{k}^{1}),B(e_{j}^{1},e_{j}^{1})\rangle.
\end{equation}
Using Equation (\ref{CCP}), we have:

\begin{center}
    $\displaystyle3|B(e^{1}_{j},\eta^{1}_{k})|^{2}=\langle R(e^{1}_{j},\eta^{1}_{k})e^{1}_{j},\eta^{1}_{k}\rangle+ 2\lambda^{2}\langle e^{1}_{j},\eta^{1}_{k}\rangle^{2}+\lambda^{2}|e^{1}_{j}|^{2}|\eta^{1}_{k}|^{2}
    $
\end{center}
and 
\begin{center}
    $\displaystyle 3\langle B(\eta_{k}^{1}, \eta_{k}^{1}),B(e_{j}^{1},e_{j}^{1})\rangle=
    -2\langle R(e^{1}_{j},\eta^{1}_{k})e^{1}_{j},\eta^{1}_{k}\rangle  +\lambda^{2} |e^{1}_{j}|^{2}|\eta^{1}_{k}|^{2}+2\lambda^{2}\langle e^{1}_{j},\eta^{1}_{k}\rangle^{2} $.
\end{center}
Now using the last two equalities in Equation (\ref{mainequation}),

\begin{center}
   $\displaystyle\sum_{A=1}^{m} -\langle N_{E_{A}},J_{\Sigma}(N_{E_{A}})\rangle$\\
  $\displaystyle=\sum_{j=1}^{n}\sum_{k=1}^{d} \frac{2}{3} \Big (\langle R(e^{1}_{j},\eta^{1}_{k})e^{1}_{j},\eta^{1}_{k}\rangle+ 2\lambda^{2}\langle e^{1}_{j},\eta^{1}_{k}\rangle^{2}+\lambda^{2}|e^{1}_{j}|^{2}|\eta^{1}_{k}|^{2}\Big )$\\
 $\displaystyle -\frac{1}{3} \Big ( -2\langle R(e^{1}_{j},\eta^{1}_{k})e^{1}_{j},\eta^{1}_{k}\rangle  +\lambda^{2} |e^{1}_{j}|^{2}|\eta^{1}_{k}|^{2}+2\lambda^{2}\langle e^{1}_{j},\eta^{1}_{k}\rangle^{2} \Big )$\\
   
\end{center}
\begin{equation}
\label{baseequation}
    \displaystyle=\sum_{j=1}^{n}\sum_{k=1}^{d}-\frac{4}{3} \langle R(e^{1}_{j},\eta^{1}_{k})\eta^{1}_{k},e^{1}_{j}\rangle+\frac{2\lambda^{2}}{3}\langle e^{1}_{j},\eta^{1}_{k}\rangle^{2}+\frac{\lambda^{2}}{3}|e^{1}_{j}|^{2}|\eta^{1}_{k}|^{2}.
\end{equation}
Using Equation (\ref{fubini}) in Equation (\ref{baseequation}),

\begin{equation}
\label{L0}
\sum_{A=1}^{m} -\langle N_{E_{A}},J_{\Sigma}(N_{E_{A}})\rangle=\lambda^{2} \Big (\sum_{j=1}^{n}\sum_{k=1}^{d}\langle e^{1}_{j},\eta^{1}_{k}\rangle^{2}-\langle e^{1}_{j},J(\eta^{1}_{k})\rangle^{2} \Big ).
\end{equation}\\\

For $k\in\{1,\ldots,d\}$,

\begin{equation*}
\begin{aligned}
\displaystyle |\eta^{1}_{k}|^{2}&=|J(\eta^{1}_{k})|^{2}=|(J(\eta^{1}_{k}),0)|^{2}\\
\displaystyle &=\sum_{j=1}^{n}\langle (J(\eta^{1}_{k}),0),e_{j}\rangle^{2}+\sum_{l=1}^{d}\langle(J(\eta^{1}_{k}),0),\eta_{l}\rangle^{2}\\
\displaystyle &=\sum_{j=1}^{n}\langle J(\eta^{1}_{k}),e_{j}^{1}\rangle^{2}+\sum_{l=1}^{d}\langle J(\eta^{1}_{k}),\eta^{1}_{l}\rangle^{2}.
\end{aligned}
\end{equation*}
Then,

\begin{center}
$\displaystyle -\sum_{j=1}^{n}\langle J(\eta^{1}_{k}),e_{j}^{1}\rangle^{2}=-|\eta^{1}_{k}|^{2}+\sum_{l=1}^{d}\langle J(\eta^{1}_{k}),\eta^{1}_{l}\rangle^{2}$,
\end{center}
and summing in $k$,

\begin{equation}
\label{L1}
\displaystyle -\sum_{j=1}^{n}\sum_{k=1}^{d}\langle J(\eta^{1}_{k}),e_{j}^{1}\rangle^{2}=-\sum_{k=1}^{d}|\eta^{1}_{k}|^{2}+\sum_{k=1}^{d}\sum_{l=1}^{d}\langle J(\eta^{1}_{k}),\eta^{1}_{l}\rangle^{2}.
\end{equation}
On the other hand, again for $k\in \{1,\ldots,d\}$

\begin{equation*}
\begin{aligned}
\displaystyle |\eta^{1}_{k}|^{2}&=|(\eta^{1}_{k},0)|^{2}\\
\displaystyle &=\sum_{j=1}^{n}\langle(\eta^{1}_{k},0),e_{j}\rangle^{2}+\sum_{l=1}^{d}\langle(\eta^{1}_{k},0),\eta_{l}\rangle^{2}\\
\displaystyle &=\sum_{j=1}^{n}\langle \eta^{1}_{k},e_{j}^{1}\rangle^{2}+\sum_{l=1}^{d}\langle\eta^{1}_{k},\eta^{1}_{l}\rangle^{2}.
\end{aligned}
\end{equation*}
Therefore,

\begin{center}
$\displaystyle \sum_{j=1}^{n}\langle \eta^{1}_{k},e_{j}^{1}\rangle^{2} =|\eta^{1}_{k}|^{2}-\sum_{l=1}^{d}\langle \eta^{1}_{k},\eta^{1}_{l}\rangle^{2}$,
\end{center}
and summing in $k$,

\begin{equation}
\label{L2}
    \sum_{j=1}^{n}\sum_{k=1}^{d}\langle\eta^{1}_{k},e_{j}^{1}\rangle^{2} =\sum_{k=1}^{d}|\eta^{1}_{k}|^{2}-\sum_{k=1}^{d}\sum_{l=1}^{d}\langle\eta^{1}_{k},\eta^{1}_{l}\rangle^{2}.
\end{equation}
Then, in order to prove Equation (\ref{MAIN}), we replace Equations (\ref{L1}) and (\ref{L2}) in (\ref{L0}),

\begin{equation*}
\begin{aligned}
\displaystyle   \sum_{A=1}^{m} -\langle N_{E_{A}},J_{\Sigma}(N_{E_{A}})\rangle &=\lambda^{2} \Big ( \sum_{k=1}^{d}|\eta^{1}_{k}|^{2}-\sum_{k=1}^{d}\sum_{l=1}^{d}\langle \eta^{1}_{k},\eta^{1}_{l}\rangle^{2}-\sum_{k=1}^{d}|\eta^{1}_{k}|^{2}+\sum_{k=1}^{d}\sum_{l=1}^{d}\langle J(\eta^{1}_{k}),\eta^{1}_{l}\rangle^{2} \Big )\\
\displaystyle &=\lambda^{2}\Big (\sum_{k=1}^{d}\sum_{l=1}^{d}\langle J(\eta^{1}_{k}),\eta^{1}_{l}\rangle^{2}-\langle \eta^{1}_{k},\eta^{1}_{l}\rangle^{2}\Big ).\\
\end{aligned}
\end{equation*}

Now, let us prove Equation (\ref{MAIN2}). For $j\in\{1,\ldots,n\}$,

\begin{equation*}
\begin{aligned}
    \displaystyle |e^{1}_{j}|^{2}&=|J(e^{1}_{j})|^{2}=|(J(e^{1}_{j}),0)|^{2}\\
    \displaystyle &=\sum_{i=1}^{n}\langle(J(e^{1}_{j}),0),e_{i}\rangle^{2}+\sum_{k=1}^{d} \langle(J(e^{1}_{j}),0),\eta_{k}\rangle^{2}\\
    \displaystyle &=\sum_{i=1}^{n}\langle J(e^{1}_{j}),e^{1}_{i}\rangle^{2}+\sum_{k=1}^{d} \langle J(e^{1}_{j}),\eta_{k}^{1}\rangle^{2}.\\
\end{aligned}
\end{equation*}
Then,
\begin{center}
    $\displaystyle -\sum_{k=1}^{d} \langle J(e^{1}_{j}),\eta_{k}^{1}\rangle^{2}=-|e^{1}_{j}|^{2}+\sum_{i=1}^{n}\langle J(e^{1}_{j}),e_{i}^{1}\rangle^{2}$,
\end{center}
and summing in $j$,
\begin{equation}
\label{s1}
    \displaystyle -\sum_{j=1}^{n}\sum_{k=1}^{d} \langle J(e^{1}_{j}),\eta_{k}^{1}\rangle^{2}=-\sum_{j=1}^{n}|e^{1}_{j}|^{2}+\sum_{j=1}^{n}\sum_{i=1}^{n}\langle J(e^{1}_{j}),e_{i}^{1}\rangle^{2}.
\end{equation}
On the other hand, again for $j\in \{1,\ldots,n\}$

\begin{equation*}
    \begin{aligned}
    \displaystyle |e^{1}_{j}|^{2}&=|(e^{1}_{j},0)|^{2}\\
    \displaystyle &=\sum_{i=1}^{n}\langle(e^{1}_{j},0),e_{i}\rangle^{2}+\sum_{k=1}^{d} \langle(e^{1}_{j},0),\eta_{k}\rangle^{2}\\
    \displaystyle &=\sum_{i=1}^{n}\langle e^{1}_{j},e^{1}_{i}\rangle^{2}+\sum_{k=1}^{d} \langle e^{1}_{j},\eta_{k}^{1}\rangle^{2}.\\
\end{aligned}
\end{equation*}
Therefore,

\begin{center}
    $\displaystyle \sum_{k=1}^{d} \langle e^{1}_{j},\eta_{k}^{1}\rangle^{2}=|e^{1}_{j}|^{2}-\sum_{i=1}^{n}\langle e^{1}_{j},e^{1}_{i}\rangle^{2}$,\\
\end{center}
summing in $j$,

\begin{equation}
\label{s2}
    \displaystyle \sum_{j=1}^{n}\sum_{k=1}^{d} \langle e^{1}_{j},\eta_{k}^{1}\rangle^{2}=\sum_{j=1}^{n}|e^{1}_{j}|^{2}-\sum_{j=1}^{n}\sum_{i=1}^{n}\langle e^{1}_{j},e^{1}_{i}\rangle^{2}.
\end{equation}
Replacing Equations (\ref{s1}) and (\ref{s2}) in (\ref{L0})

\begin{equation*}
\begin{aligned}
\displaystyle
\sum_{A=1}^{m} -\langle N_{E_{A}},J_{\Sigma}(N_{E_{A}})\rangle &=\lambda^{2}\Big (\sum_{j=1}^{n}|e^{1}_{j}|^{2}-\sum_{j=1}^{n}\sum_{i=1}^{n}\langle e^{1}_{j},e^{1}_{i}\rangle^{2}
-\sum_{j=1}^{n}|e^{1}_{j}|^{2}+\sum_{j=1}^{n}\sum_{i=1}^{n}\langle J(e^{1}_{j}),e_{i}^{1}\rangle^{2})\Big )
\\
\displaystyle & =\lambda^{2} \Big ( \sum_{j=1}^{n}\sum_{i=1}^{n}
\langle J(e^{1}_{j}),e_{i}^{1}\rangle^{2}-\langle e^{1}_{j},e^{1}_{i}\rangle^{2}\Big ).
\end{aligned}
\end{equation*}
\end{proof}

\end{subsection}

\subsection{CODIMENSION $1$}
\label{CC1}

In this subsection, we will use the general formula proved in the last subsection to prove a classification theorem for compact stable minimal immersions of codimension $1$ in the product of a complex projective space with any other Riemannian manifold. Moreover, as an application, we obtain some corollaries when the second manifold is a compact rank one space.
\begin{theorem}
\label{cod1c}
Let $\Phi=(\psi,\phi):\Sigma \rightarrow \bar{M}:= \mathbb{C}P^{\frac{m_{1}}{2}}\times M$ be a compact stable minimal immersion of codimension $d=1$, where $M$ is any Riemannian manifold of dimension $m_{2}$. Then, $\Sigma=\mathbb{C}P^{\frac{m_{1}}{2}}\times \hat{\Sigma}$, $\Phi=Id\times \hat{\phi}$ where $\hat{\phi}:\hat{\Sigma}\rightarrow M$ is a compact stable minimal  immersion of codimension $1$, and therefore $\Phi(\Sigma)=\mathbb{C}P^{\frac{m_{1}}{2}}\times \hat{\phi}(\hat{\Sigma})$. In particular, for $m_{2}=1$, $\Sigma=\mathbb{C}P^{\frac{m_{1}}{2}}$, $\hat{\phi}$ is a constant function, and $\Phi(\Sigma)=\mathbb{C}P^{\frac{m_{1}}{2}}\times \{q\}$, for $q\in M$.
\end{theorem}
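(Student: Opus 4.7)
The plan is to apply Lemma \ref{GENERALC} with $d=1$, extract a pointwise vanishing from stability, and then upgrade this infinitesimal information to a global splitting via Lemma \ref{lemmafact}.

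First, when $d=1$ the right-hand side of (\ref{MAIN}) collapses dramatically: writing $\eta$ for the unit normal, the skew-adjointness of $J$ gives $\langle J(\eta^1),\eta^1\rangle = 0$, so
\begin{equation*}
\sum_{A=1}^{m}-\langle N_{E_{A}},J_{\Sigma}(N_{E_{A}})\rangle = -\lambda^{2}|\eta^{1}|^{4} \le 0
\end{equation*}
pointwise on $\Sigma$. On the other hand, since each $N_{E_{A}}$ is a smooth normal section, stability gives $-\int_{\Sigma}\langle N_{E_{A}},J_{\Sigma}N_{E_{A}}\rangle\,d\Sigma \ge 0$ for every $A$. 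Summing in $A$ and integrating the pointwise identity yields $-\lambda^{2}\int_{\Sigma}|\eta^{1}|^{4}\,d\Sigma \ge 0$, which forces $\eta^{1}\equiv 0$. Equivalently, at every $p\in\Sigma$ the $m_{1}$-dimensional subspace $T_{\psi(p)}\mathbb{C}P^{\frac{m_{1}}{2}}\oplus\{0\}$ is contained in $T_{p}\Sigma$.

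Next I would globalize this by an open-closed argument: for each $p\in\Sigma$ the set $S_{p}:=\{q\in\mathbb{C}P^{\frac{m_{1}}{2}}:(q,\phi(p))\in\Sigma\}$ is closed in $\mathbb{C}P^{\frac{m_{1}}{2}}$ because $\Sigma$ is compact, and open because the tangent-space inclusion from the previous step guarantees that the slice $\mathbb{C}P^{\frac{m_{1}}{2}}\times\{\phi(p)\}$ sits locally inside $\Sigma$ at any of its points in $\Sigma$. Connectedness of $\mathbb{C}P^{\frac{m_{1}}{2}}$ then gives $S_{p}=\mathbb{C}P^{\frac{m_{1}}{2}}$, so the entire slice lies in $\Sigma$. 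In particular $\psi:\Sigma\to\mathbb{C}P^{\frac{m_{1}}{2}}$ is surjective, and it is a Riemannian submersion with horizontal distribution $H_{p}=T_{\psi(p)}\mathbb{C}P^{\frac{m_{1}}{2}}\oplus\{0\}$ and vertical distribution $V_{p}=T_{p}\Sigma\cap(\{0\}\oplus T_{\phi(p)}M)$.

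I would then verify the hypotheses of Lemma \ref{lemmafact}. Completeness of $\Sigma$ comes from compactness, and $\mathbb{C}P^{\frac{m_{1}}{2}}$ is simply connected. Integrability of $H$ is immediate: its integral leaves are exactly the slices $\mathbb{C}P^{\frac{m_{1}}{2}}\times\{\phi(p)\}$ which we have just placed inside $\Sigma$. For the fibers of $\psi$ to be totally geodesic it suffices to check that $V$ is parallel in $\Sigma$, which follows from the product structure: for $Y=(Y_{1},Y_{2})$ tangent to $\Sigma$ and $X=(0,X_{2})$ a section of $V$, equation (\ref{SCM1M2}) gives $\bar{\nabla}_{Y}X=(0,\nabla^{2}_{Y_{2}}X_{2})\in\{0\}\oplus TM$, and the induced connection $\nabla^{\Sigma}_{Y}X$ is its projection onto $T\Sigma$, hence still lies in $V$. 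Thus Lemma \ref{lemmafact} gives $\Sigma=\mathbb{C}P^{\frac{m_{1}}{2}}\times\hat{\Sigma}$ isometrically and $\Phi=Id\times\hat{\phi}$ for some map $\hat{\phi}:\hat{\Sigma}\to M$ of codimension $1$.

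Finally, minimality and stability of $\hat{\phi}$ follow from the corresponding properties of $\Phi$: the second fundamental form of a product immersion into a Riemannian product is block-diagonal, so $H_{\Phi}=(0,H_{\hat\phi})$ and minimality of $\Phi$ is equivalent to that of $\hat{\phi}$, while any normal variation of $\hat{\phi}$ lifts to a normal variation of $\Phi$ whose Jacobi quadratic form is the one of $\hat{\phi}$ multiplied by $\mathrm{vol}(\mathbb{C}P^{\frac{m_{1}}{2}})$. When $m_{2}=1$ the submanifold $\hat{\Sigma}$ is $0$-dimensional and $\hat{\phi}$ is constant, recovering the slice $\mathbb{C}P^{\frac{m_{1}}{2}}\times\{q\}$. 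The main technical point I expect to require care is the verification that the vertical distribution is parallel (equivalently, that the fibers of $\psi$ are totally geodesic), since this is the least mechanical of the hypotheses of Lemma \ref{lemmafact}; everything else in the argument is either a direct consequence of the stability identity (\ref{MAIN}) or a standard topological connectedness step.
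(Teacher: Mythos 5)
Your proposal is correct and follows essentially the same route as the paper's proof: Lemma \ref{GENERALC} with $d=1$ plus stability forces $\eta^{1}\equiv 0$, this turns $\psi$ into an onto Riemannian submersion with totally geodesic fibers and integrable horizontal distribution, Lemma \ref{lemmafact} then gives the global splitting $\Sigma=\mathbb{C}P^{\frac{m_{1}}{2}}\times\hat{\Sigma}$, and the product structure transfers minimality and stability to $\hat{\phi}$. The only substantive differences lie in two sub-verifications of the hypotheses of Lemma \ref{lemmafact}: you obtain surjectivity of $\psi$ by an open--closed argument on the slices (the paper instead notes that $\psi(\Sigma)$ is open and, by Hermann's theorem, complete hence closed in the connected space $\mathbb{C}P^{\frac{m_{1}}{2}}$), and you prove that the fibers are totally geodesic by checking directly from (\ref{SCM1M2}) that the vertical distribution is parallel under the induced connection (the paper reaches the same conclusion through the parallel almost product structure $P^{\Sigma}$ and Proposition 2 of the cited reference). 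Both substitutes are sound, and your parallelism computation is arguably the more economical of the two; the one presentational caution is that your slice argument should be phrased in terms of $\Phi(\Sigma)$ rather than $\Sigma$ itself, since $\Phi$ is only an immersion, and you should record explicitly (as the paper does) that $d\psi$ preserves the lengths of horizontal vectors, which follows at once from $\Phi$ being isometric and $d\phi$ vanishing on the horizontal distribution.
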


\begin{proof}
Since $d=1$, Equation (\ref{MAIN}) becomes

\begin{equation}
\label{similar}
\displaystyle\sum_{A=1}^{m} -\langle N_{E_{A}},J_{\Sigma}(N_{E_{A}})\rangle=\lambda^{2}(\langle J(\eta^{1}),\eta^{1}\rangle^{2}-\langle\eta^{1},\eta^{1}\rangle^{2})=-\lambda^{2}|\eta^{1}|^{4},
\end{equation}
where $\eta$ is an unitary vector in $N_{p}\Sigma$, for $p \in \Sigma$. Therefore,

\begin{center}
$\displaystyle 0\leq \sum_{A=1}^{m} -\int_{\Sigma}\langle N_{E_{A}},J_{\Sigma}(N_{E_{A}})\rangle d\Sigma=
-\lambda^{2}\int_{\Sigma}|\eta^{1}|^{4}d\Sigma \leq 0,$
\end{center}
where we have used the fact that $\Sigma$ is stable in the first inequality. Hence, for $p\in \Sigma$, $\eta^{1}=0$, and therefore $\eta=(0,\eta^{2})$. Then,

\begin{center}
    $ d\Phi_{p}(T_{p}\Sigma)= \bar{D_{1}}(p) \bigoplus \bar{D_{2}}(p)$,
\end{center}
where $\bar{D_{1}}$ and $\bar{D_{2}}$ are given by: 

\begin{center}
    $\bar{D_{1}}(p)=\{(x,0): x\in T_{\psi(p)}\mathbb{C}P^{\frac{m_{1}}{2}}\} $\\
    
    $\bar{D_{2}}(p)= \{(0,w): w\in [\eta^{2}]^{\perp_{M}}\}$,
\end{center}
where $[z]^{\perp_{M}}$ is the orthogonal complement of $z$ in $T_{\phi(p)}M$. Since $d\Phi_{p}(T_{p}\Sigma)$ is isometric to $T_{p}\Sigma$, $\bar{D_{1}}$ and $\bar{D_{2}}$ induce orthogonal complementary smooth distributions $D_{1}$ and $D_{2}$ on $\Sigma$ given by

\begin{center}
    $D_{1}(p)=\{h\in T_{p}\Sigma: d\Phi_{p}(h)\in \bar{D_{1}}(p)\}=\{h\in T_{p}\Sigma: d\phi_{p}(h)=0\}=\text{ker}(d\phi_{p})$\\
    
     $D_{2}(p)=\{v\in T_{p}\Sigma: d\Phi_{p}(v)\in \bar{D_{2}}(p)\}=\{v\in T_{p}\Sigma: d\psi_{p}(v)=0\}=\text{ker}(d\psi_{p})$.
\end{center}
\begin{lemma}
The function $\psi:\Sigma \rightarrow \mathbb{C}P^{\frac{m_{1}}{2}}$ is an onto Riemannian submersion, with horizontal and vertical distributions given by $D_{1}$ and $D_{2}$, respectively. Moreover, $D_{1}$ and $D_{2}$ are totally geodesic distributions.\\
\end{lemma}

\begin{proof}

\begin{itemize}
    \item \textbf{The mapping $d\psi_{p}:T_{p}\Sigma \rightarrow T_{\psi(p)}\mathbb{C}P^{\frac{m_{1}}{2}}$ is onto.} Let $x\in T_{\psi(p)}\mathbb{C}P^{\frac{m_{1}}{2}}$. Since $(x,0)\in \bar{D_{1}}(p)\subset  d\Phi_{p}(T_{p}\Sigma)$, there exists $h\in T_{p}\Sigma$ such that 
    \begin{center}
    $d\Phi_{p}(h)=(d\psi_{p}(h),d\phi_{p}(h))=(x,0)$.
    \end{center}
    Therefore, $d\psi_{p}(h)=x$.
    
\item By definition the vertical vectors $v$ of $\psi$ at $p\in \Sigma$ are such that $d\psi_{p}(v)=0$. Therefore, $D_{2}(p)$ consists of the vertical vectors, and thus $D_{1}(p)$ consists of the horizontal vectors.
    
\item \textbf{$d\psi_{p}$ preserves the length of horizontal vectors.} Let $h_{1},h_{2}$ be horizontal vectors. Therefore, $h_{1},h_{2}\in D_{1}(p)$ and then $d\phi_{p}(h_{1})=d\phi_{p}(h_{2})=0$. Since $\Phi$ is an isometric immersion,
\begin{center}
    $\langle d\Phi_{p}(h_{1}),d\Phi_{p}(h_{2})\rangle=\langle h_{1},h_{2}\rangle.$
\end{center}
But,
\begin{equation*}
\begin{aligned}
\displaystyle\langle d\Phi_{p}(h_{1}),d\Phi_{p}(h_{2})\rangle \displaystyle &=\langle(d\psi_{p}(h_{1}),d\phi_{p}(h_{1})),(d\psi_{p}(h_{2}),d\phi_{p}(h_{2}))\rangle\\
&=\langle d\psi_{p}(h_{1}),d\psi_{p}(h_{2})\rangle.
    \end{aligned}
    \end{equation*}
Then,
\begin{center}
    $\displaystyle \langle d\psi_{p}(h_{1}),d\psi_{p}(h_{2})\rangle= \langle h_{1},h_{2}\rangle.$
\end{center}
\item \textbf{$\psi$ is onto.} Until now, we have that $\psi$ is a Riemannian submersion.  By properties of submersions (Proposition 5.18 \cite{lee2013smooth}), $\psi(\Sigma)$ is an open set in $\mathbb{C}P^{\frac{m_{1}}{2}}$. Now $\psi:\Sigma \rightarrow \psi(\Sigma)$ is an onto Riemannian submersion. Since $\Sigma$ is complete, $\psi(\Sigma)$ is complete by Hermann \cite{hermann1960sufficient} and then closed. Therefore, $\psi(\Sigma)=\mathbb{C}P^{\frac{m_{1}}{2}} $ because 
$\psi(\Sigma)$  is a closed and open set of the connected set $\mathbb{C}P^{\frac{m_{1}}{2}}$.
\end{itemize}

\begin{itemize}
    \item \textbf{$D_{1}$ and $D_{2}$ are totally geodesic distributions.} Let $\nabla$ and $\bar{\nabla}$ be the connections of Levi-Civita on $\Sigma$ and $\bar{M}$, respectively. Let $q=(q_{1},q_{2})\in\bar{M}$ and $P:T_{q}\bar{M}\rightarrow T_{q}\bar{M}$ be a mapping given by
    
    \begin{center}
    $P(v_{1},v_{2})=(v_{1},-v_{2})$, where $v_{1}\in T_{q_{1}}\mathbb{C}P^{\frac{m_{1}}{2}}$ and $v_{2}\in T_{q_{2}}M$.\\
    \end{center}
    The map $P$ is a linear isometry that is parallel, $\bar{\nabla}P=0$, i.e. $(\bar{\nabla}_{A}P)C=0$, for all $A,C \in T_{q}\bar{M}$.\\
    
    Let us define at every point $p\in \Sigma$ the mapping $P^{\Sigma}:T_{p}\Sigma \rightarrow T_{p}\Sigma$ given by
    \begin{center}
    $P^{\Sigma}(w_{1}+w_{2})=w_{1}-w_{2}$, where $w_{1}\in D_{1}(p)$ and $w_{2}\in D_{2}(p)$. 
    \end{center}
    Notice that $P^{\Sigma}$ is a Riemannian almost product structure on $\Sigma$, where the eigenspaces of the eigenvalues $1$ and $-1$ of the operator $P^{\Sigma}$  are precisely given by $D_{1}(p)$ and $D_{2}(p)$, respectively. We have the following properties: \\
    
    \begin{enumerate}
        \item $d\Phi(P^{\Sigma})=P(d\Phi)$. Let $x=x_{1}+x_{2}\in T_{p}\Sigma$, where $x_{i}\in D_{i}(p)$, $i=1,2$. Then,
        
         \begin{equation*}
        \begin{aligned}
          d\Phi_{p}(P^{\Sigma}(x))&=d\Phi_{p}(x_{1}-x_{2})=d\Phi_{p}(x_{1})-d\Phi_{p}(x_{2})\\   &=(d\psi_{p}(x_{1}),0)-(0,d\phi_{p}(x_{2}))=(d\psi_{p}(x_{1}),-d\phi_{p}(x_{2}))\\
&=P(d\psi_{p}(x_{1}),d\phi_{p}(x_{2}))=P(d\psi_{p}(x_{1}+x_{2}),d\phi_{p}(x_{1}+x_{2}))\\
            &=P(d\Phi_{p}(x)).
            \end{aligned}
            \end{equation*}
        \item 
        $P([x]^{\Sigma})=[P(x)]^{\Sigma}$, where $[\cdot]^{\Sigma}$ is the projection onto $d\Phi(T\Sigma)$. Let $x\in T_{\Phi(p)}\bar{M}$, and $\{e_{i}\}_{i=1}^{n}$ be an orthonormal basis of $T_{p}\Sigma$. Then,
    
    \begin{equation*}
    \begin{aligned}\displaystyle P([x]^{\Sigma})&=P(\sum_{i=1}^{n}\langle x,d\Phi_{p}(e_{i})\rangle d\Phi_{p}(e_{i}))\\
        &=\displaystyle\sum_{i=1}^{n}\langle x,d\Phi_{p}(e_{i})\rangle P(d\Phi_{p}(e_{i}))\\
        &=\displaystyle\sum_{i=1}^{n}\langle P(x),P(d\Phi_{p}(e_{i}))\rangle P(d\Phi_{p}(e_{i}))\\
        &=\displaystyle\sum_{i=1}^{n}\langle P(x),d\Phi_{p}(P^{\Sigma}(e_{i}))\rangle d\Phi_{p}(P^{\Sigma}(e_{i}))\\
        &=[P(x)]^{\Sigma},
    \end{aligned}
    \end{equation*}where we have used the previous item that $P$ is an isometry and $\displaystyle\{P^{\Sigma}(e_{i})\}_{i=1}^{n}$ is still an orthonormal basis of $T_{p}\Sigma$.\\
    \end{enumerate}
    Let $X$, $Y$ be vector fields in $D_{1}$. Then

\begin{equation*}
\begin{aligned}
      \displaystyle d\Phi((\nabla_{X}P^{\Sigma})Y)
      &=d\Phi(\nabla_{X}P^{\Sigma}(Y)-P^{\Sigma}(\nabla_{X}Y))\\
      &=d\Phi(\nabla_{X}P^{\Sigma}(Y))-d\Phi(P^{\Sigma}(\nabla_{X}Y))\\
      \displaystyle &=[\bar{\nabla}_{d\Phi(X)}d\Phi(P^{\Sigma}(Y))]^{\Sigma}
      -P(d\Phi(\nabla_{X}Y))\\
      \displaystyle &=[\bar{\nabla}_{d\Phi(X)}P(d\Phi(Y))]^{\Sigma}
      -P([\bar{\nabla}_{d\Phi(X)}d\Phi(Y)]^{\Sigma})\\
       \displaystyle &=[\bar{\nabla}_{d\Phi(X)}P(d\Phi(Y))]^{\Sigma}
      -[P(\bar{\nabla}_{d\Phi(X)}d\Phi(Y))]^{\Sigma}\\
       \displaystyle &=[\bar{\nabla}_{d\Phi(X)}P(d\Phi(Y))
      -P(\bar{\nabla}_{d\Phi(X)}d\Phi(Y))]^{\Sigma}\\
      \displaystyle &=[(\bar{\nabla}_{d\Phi(X)}P)d\Phi(Y)
      ]^{\Sigma}=0.\\
      \end{aligned}
\end{equation*}
Since $d\Phi$ is one to one,  $(\nabla_{X}P^{\Sigma})Y=0$. Therefore, by Proposition 2 in \cite{gil1983geometric}, $D_{1}$ is a totally geodesic distribution.\\

Analogously, redefining $P^{\Sigma}:T_{p}\Sigma \rightarrow T_{p}\Sigma$ by $P^{\Sigma}(v_{1}+v_{2})=-v_{1}+v_{2}$, we conclude that $D_{2}$ is also a totally geodesic distribution. 
 
\end{itemize}
\end{proof}

  By Lemma \ref{lemmafact},  we get that   $\Sigma=\mathbb{CP}^{\frac{m_{1}}{2}}\times \psi^{-1}(s)$ up to an isometry, where $s\in \mathbb{CP}^{\frac{m_{1}}{2}}$ is a fixed element and that $\psi$ is the trivial submersion, i.e. $\psi(p)=r$, for $p=(r,q)$ where $r\in\mathbb{C}P^{\frac{m_{1}}{2}}$ and $q\in \psi^{-1}(s)$ (we pick an arbitrary element $s$ because  the fibers of $\psi$ are isometric, see \cite{hermann1960sufficient}).
  Notice that $\psi^{-1}(s)$ is a compact Riemannian manifold of dimension $m_{2}-1$ and that 
  
  \begin{center}
      $\displaystyle d\psi_{p}(x,w)=x$ for $x\in T_{r}\mathbb{CP}^{\frac{m_{1}}{2}}$ and $w\in T_{q}\psi^{-1}(s)$.
  \end{center}
  Now, we will show that the function $\phi$ does not depend on $r \in \mathbb{CP}^{\frac{m_{1}}{2}}$. It suffices to prove that $d\phi_{p}(x,0)=0$ for $x\in T_{r}\mathbb{CP}^{\frac{m_{1}}{2}}$. Let $x\in T_{r}\mathbb{C}P^{\frac{m_{1}}{2}}$. Then,
  \begin{center}
  $d\Phi_{p}(x,0)=(d\psi_{p}(x,0),d\phi_{p}(x,0))=(x,d\phi_{p}(x,0))$,
  \end{center}
which implies that
\begin{center}$|d\Phi_{p}(x,0)|^{2}=|x|^{2}+|d\phi_{p}(x,0)|^{2}$.
\end{center}
On the other hand, since $\Phi$ is an isometric immersion, 
 
 \begin{center}
     $|d\Phi_{p}(x,0)|^{2}=|(x,0)|^{2}=|x|^{2}.$
 \end{center}
 Thus, $|d\phi_{p}(x,0)|^{2}=0$. Since $\phi$ does not depend of $r \in \mathbb{CP}^{\frac{m_{1}}{2}}$, we can fix $r=s$ and denote
 
 \begin{center}$\hat{\phi}:\psi^{-1}(s)\rightarrow M$\\
 \hspace{3.3cm}$q \rightarrow \hat{\phi}(q):=\phi(s,q)$,
 \end{center}
 with $d\hat{\phi}_{q}(w)=d\phi_{p}(0,w)$ for $p=(s,q)$, $q\in \psi^{-1}(s)$. Now, it only remains to prove that  $\hat{\phi}$ is a stable minimal immersion  in $M$.
 \begin{itemize}
     \item If $d\hat{\phi}_{q}(w)=0$, for $w\in T_{q}\psi^{-1}(s)$, then 
 \begin{center}
     $d\Phi_{p}(0,w)=(d\psi_{p}(0,w),d\phi_{p}(0,w))=(0,0)$.
 \end{center}
 Since $\Phi$ is an immersion, $(0,w)=(0,0)$, and thus $w=0$.
 
 \item Let $w,v \in T_{q}\psi^{-1}(s)$,  
 \begin{equation*}
 \begin{aligned}
 \langle d\hat{\phi}_{q}(w),d\hat{\phi}_{q}(v) \rangle &=\langle d\phi_{p}(0,w),d\phi_{p}(0,v) \rangle\\
 &=\langle (0,d\phi_{p}(0,w)),(0,d\phi_{p}(0,v)) \rangle\\
 &=\langle (d\psi_{p}(0,w),d\phi_{p}(0,w)),(d\psi_{p}(0,v),d\phi_{p}(0,v)) \rangle\\
 &=\langle d\Phi_{p}(0,w),d\Phi_{p}(0,v)\rangle\\ &= \langle (0,w),(0,v)\rangle\\
 &=\langle w,v\rangle
 \end{aligned}
 \end{equation*}
 \item
 
 Let $H$ be the mean curvature vector of $\Phi$, $p=(r,q)\in \Sigma$, $N=(0,N^{2})$ be a local unitary normal vector field around $p$,  $X_{1},\ldots,X_{m_{1}}$ be local orthonormal vector fields around $r$ in $\mathbb{C}P^{\frac{m_{1}}{2}}$,  and  $W_{1},\ldots, W_{m_{2}-1}$ be local orthonormal vector fields around $q$ in $\psi^{-1}(s)$. We will omit the evaluation at $p$ in the following computation,

 \begin{equation*}
     \begin{aligned} 0=H &=\sum_{i=1}^{m_{1}}[\bar{\nabla}_{d\Phi(X_{i},0)}d\Phi(X_{i},0)]^{N\Sigma}+\sum_{j=1}^{m_{2}-1}[\bar{\nabla}_{d\Phi(0,W_{j})}d\Phi(0,W_{j})]^{N\Sigma}\\
     &=\sum_{i=1}^{m_{1}}[\bar{\nabla}_{(X_{i},0)}(X_{i},0)]^{N\Sigma}+\sum_{j=1}^{m_{2}-1}[\bar{\nabla}_{(0,d\hat{\phi}(W_{j}))}(0,d\hat{\phi}(W_{j}))]^{N\Sigma}\\
     & =\displaystyle\sum_{i=1}^{m_{1}}\langle \bar{\nabla}_{(X_{i},0)}(X_{i},0),(0,N^{2})\rangle(0,N^{2}) +\sum_{j=1}^{m_{2}-1}\langle \bar{\nabla}_{(0,d\hat{\phi}(W_{j}))}(0,d\hat{\phi}(W_{j})),(0,N^{2})\rangle(0,N^{2})\\
     &=\sum_{i=1}^{m_{1}}\langle(\nabla^{1}_{X_{i}}X_{i},0),(0,N^{2})\rangle(0,N^{2})
     +\sum_{j=1}^{m_{2}-1}\langle(0,\nabla^{2}_{d\hat{\phi}(W_{j})}d\hat{\phi}(W_{j})),(0,N^{2})\rangle(0,N^{2})\\
     & =\sum_{j=1}^{m_{2}-1}\langle(0,\nabla^{2}_{d\hat{\phi}(W_{j})}d\hat{\phi}(W_{j})),(0,N^{2})\rangle(0,N^{2})\\
     &=(0,\sum_{j=1}^{m_{2}-1}\langle \nabla^{2}_{d\hat{\phi}(W_{j})}d\hat{\phi}(W_{j}),N^{2}\rangle N^{2}),
     \end{aligned}
 \end{equation*}
 where $\nabla^{1}$ and $\nabla^{2}$ are the connections on $\mathbb{C}P^{\frac{m_{1}}{2}}$ and $M$, respectively. Then, 
 \begin{center}
 $\displaystyle A:=\sum_{j=1}^{m_{2}-1}\langle\nabla^{2}_{d\hat{\phi}(W_{j})}d\hat{\phi}(W_{j}),N^{2}\rangle N^{2}=0$.
\end{center} 
 Notice that $A$ is the mean curvature vector of the immersion $\hat{\phi}$, and thus $\hat{\phi}$ is minimal. Finally, $\hat{\phi}$ is  stable because $\Sigma$ is stable (see preliminaries section in \cite{torralbo2014stable}).
 \end{itemize}
\end{proof}

\begin{corollary}
There are no compact stable minimal hypersurfaces in the product manifold $\mathbb{C}P^{\frac{m_{1}}{2}}\times S^{s}$, $\mathbb{C}P^{\frac{m_{1}}{2}}\times \mathbb{O} P^{2}$, or $\mathbb{C}P^{\frac{m_{1}}{2}}\times \mathbb{K}P^{s}$, where $\mathbb{K}\in \{\mathbb{C}, \mathbb{H}\}$ other than  $\mathbb{C}P^{\frac{m_{1}}{2}}\times \{q\}$ in $\mathbb{C}P^{\frac{m_{1}}{2}}\times S^{1}$, where $q\in S^{1}$.
\end{corollary}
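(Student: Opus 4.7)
The plan is to deduce this corollary directly from Theorem \ref{cod1c} together with the known classifications of stable minimal submanifolds in the various compact rank one symmetric spaces (Theorems \ref{t1.1}, \ref{ohnita0}, \ref{ohnita}). The work consists almost entirely of a case-by-case bookkeeping on the factor $M$; there is no additional geometric obstacle.

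First I would let $\Phi=(\psi,\phi):\Sigma\to \mathbb{C}P^{\frac{m_{1}}{2}}\times M$ be a compact stable minimal immersion of codimension $1$, with $M$ one of $S^{s}$, $\mathbb{O}P^{2}$, or $\mathbb{K}P^{s}$ for $\mathbb{K}\in\{\mathbb{C},\mathbb{H}\}$. By Theorem \ref{cod1c}, we get the splitting $\Sigma=\mathbb{C}P^{\frac{m_{1}}{2}}\times\hat{\Sigma}$, $\Phi=\mathrm{Id}\times\hat{\phi}$, where $\hat{\phi}:\hat{\Sigma}\to M$ is a compact stable minimal immersion of codimension $1$ (so $\dim\hat{\Sigma}=\dim M-1$). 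It therefore suffices to rule out, or identify, such a codimension-$1$ stable minimal immersion in each candidate $M$.

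The case analysis proceeds as follows. For $M=S^{s}$ with $s\geq 2$, Simons' Theorem \ref{t1.1} gives that any compact minimal hypersurface in $S^{s}$ has Morse index at least $1$, hence cannot be stable; thus no such $\hat{\phi}$ exists. For $M=S^{1}$, a codimension-$1$ immersion forces $\dim\hat{\Sigma}=0$, so $\hat{\Sigma}$ is a point, $\hat{\phi}$ is the constant map to some $q\in S^{1}$, and $\Phi(\Sigma)=\mathbb{C}P^{\frac{m_{1}}{2}}\times\{q\}$, which is the lone exception listed. For $M=\mathbb{O}P^{2}$, Theorem \ref{ohnita} forces any CSMI to be an $8$-dimensional Cayley projective line, while codimension $1$ would require $\dim\hat{\Sigma}=15$, a contradiction. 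For $M=\mathbb{C}P^{s}$, Theorem \ref{ohnita} forces $\hat{\Sigma}$ to have even dimension $2l$, but codimension $1$ gives $\dim\hat{\Sigma}=2s-1$, odd, again impossible. For $M=\mathbb{H}P^{s}$, the same theorem forces $\dim\hat{\Sigma}=4l$, but codimension $1$ gives $\dim\hat{\Sigma}=4s-1\not\equiv 0\pmod{4}$, once more impossible.

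Putting these cases together yields exactly the statement of the corollary: no compact stable minimal hypersurfaces exist in any of the listed product manifolds, except the slice $\mathbb{C}P^{\frac{m_{1}}{2}}\times\{q\}$ in $\mathbb{C}P^{\frac{m_{1}}{2}}\times S^{1}$. There is no genuine obstacle in this argument; the only subtlety is making sure to invoke Theorem \ref{cod1c} in the form that includes the degenerate situation $m_{2}=1$, which is what produces the single exceptional example.
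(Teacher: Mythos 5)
Your proposal is correct and follows the same route as the paper: apply Theorem \ref{cod1c} to reduce to a codimension-$1$ compact stable minimal immersion into the second factor, then invoke Theorems \ref{t1.1} and \ref{ohnita} to rule out every case except the point in $S^{1}$. The paper's own proof is just a terser version of your case-by-case check (it cites the classification theorems without spelling out the dimension parity arguments), so there is nothing to add.
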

\begin{proof}
We apply Theorem \ref{cod1c}, and notice that the are no stable minimal hypersurfaces in $S^{s}$,  $\mathbb{C}P^{s}$, 
$\mathbb{H}P^{s}$, or $\mathbb{O} P^{2}$ (see Theorems \ref{t1.1} and \ref{ohnita}) other than a point $q$ in $S^{1}$.
\end{proof}

\begin{corollary}
The only compact stable minimal hypersurface in the product space $\mathbb{C}P^{\frac{m_{1}}{2}}\times \mathbb{R}P^{s}$ is $\mathbb{C}P^{\frac{m_{1}}{2}}\times \mathbb{R}P^{s-1}$.
\end{corollary}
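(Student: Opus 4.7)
The plan is to reduce the classification problem to a one-factor problem and then invoke Ohnita's result for real projective space. Let $\Phi : \Sigma \to \mathbb{C}P^{\frac{m_{1}}{2}} \times \mathbb{R}P^{s}$ be an arbitrary compact stable minimal hypersurface, so the codimension is $d = 1$ and the second factor is the Riemannian manifold $M = \mathbb{R}P^{s}$ of dimension $m_{2} = s$.

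First I would apply Theorem \ref{cod1c} with $M = \mathbb{R}P^{s}$. This theorem is exactly the structural result we need: it tells us that after an isometry, $\Sigma = \mathbb{C}P^{\frac{m_{1}}{2}} \times \hat{\Sigma}$ and $\Phi = \mathrm{Id} \times \hat{\phi}$, where $\hat{\phi} : \hat{\Sigma} \to \mathbb{R}P^{s}$ is itself a compact stable minimal immersion of codimension $1$, i.e. a compact stable minimal hypersurface in $\mathbb{R}P^{s}$. Thus the image satisfies $\Phi(\Sigma) = \mathbb{C}P^{\frac{m_{1}}{2}} \times \hat{\phi}(\hat{\Sigma})$, and the whole question is reduced to classifying CSMI of codimension $1$ in $\mathbb{R}P^{s}$.

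Next I would invoke Theorem \ref{ohnita0} (Ohnita's classification of CSMI in $\mathbb{R}P^{n+d}$): the only compact stable minimal submanifolds of $\mathbb{R}P^{s}$ are the totally geodesic real projective subspaces $\mathbb{R}P^{k} \subset \mathbb{R}P^{s}$. Restricting to the codimension $1$ case forces $\hat{\phi}(\hat{\Sigma}) = \mathbb{R}P^{s-1}$. Substituting back, $\Phi(\Sigma) = \mathbb{C}P^{\frac{m_{1}}{2}} \times \mathbb{R}P^{s-1}$, which is the stated conclusion.

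The only point worth being careful about is the converse: one should note that $\mathbb{C}P^{\frac{m_{1}}{2}} \times \mathbb{R}P^{s-1}$ is indeed a compact stable minimal hypersurface of $\mathbb{C}P^{\frac{m_{1}}{2}} \times \mathbb{R}P^{s}$, since it is a Riemannian product of a closed manifold with a stable minimal hypersurface of the second factor (so minimality follows factor-wise and stability is inherited from $\mathbb{R}P^{s-1} \subset \mathbb{R}P^{s}$ by the standard argument for products, as used in the preliminaries of \cite{torralbo2014stable} and cited at the end of the proof of Theorem \ref{cod1c}). So the proposal is really just two lines: apply Theorem \ref{cod1c}, then apply Theorem \ref{ohnita0}; no independent calculation is needed and there is no serious obstacle.
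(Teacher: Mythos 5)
Your proposal is correct and follows exactly the paper's own argument: apply Theorem \ref{cod1c} to reduce to a codimension-one CSMI in $\mathbb{R}P^{s}$, then invoke Theorem \ref{ohnita0} to identify it as $\mathbb{R}P^{s-1}$. Your extra remark verifying the converse (that $\mathbb{C}P^{\frac{m_{1}}{2}}\times \mathbb{R}P^{s-1}$ is indeed stable and minimal) is a reasonable addition that the paper leaves implicit, but otherwise the two proofs coincide.
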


\begin{proof}
We apply Theorem \ref{cod1c}, and notice that the only stable minimal hypersurface in $\mathbb{R}P^{s}$ is
$\mathbb{R}P^{s-1}$ (see Theorem \ref{ohnita0}).
\end{proof}

\subsection{CODIMENSION $2$ OR DIMENSION $2$}
\label{CC2}
In this subsection, we will use the general formula obtained in Subsection \ref{gfc} to prove a characterization of compact stable minimal immersions of codimension or dimension two in the product of two complex projective spaces.

\begin{lemma}
\label{cod2c}
Let $\Phi=(\psi,\phi):\Sigma \rightarrow \mathbb{C}P^{\frac{m_{1}}{2}}\times M$ be a compact stable minimal immersion of codimension $d=2$, where $M$ is any Riemannian manifold of dimension $m_{2}$. Then, $\eta_{2}^{1}=\pm J(\eta_{1}^{1})$, where $\{\eta_{1},\eta_{2}\}$ is an orthonormal basis of $N_{p}\Sigma$, $ p \in \Sigma$, $J$ is the complex structure of $\mathbb{C}P^{\frac{m_{1}}{2}}$ and $\eta_{i}^{1}$ is the projection of $\eta_{i}$ in $T_{\psi(p)}\mathbb{C}P^{\frac{m_{1}}{2}}$.
\end{lemma}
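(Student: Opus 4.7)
The plan is to specialize the master identity in Lemma \ref{GENERALC} to codimension two and then extract the algebraic condition by combining it with stability. Setting $d=2$ in Equation (\ref{MAIN}), I would exploit the skew-symmetry of $J$ (so that $\langle J\eta_k^1,\eta_k^1\rangle=0$) and the symmetry of the summand under swapping $k$ and $l$ (using $\langle J\eta_2^1,\eta_1^1\rangle=-\langle J\eta_1^1,\eta_2^1\rangle$, which is squared) to simplify the right-hand side to
\[
\sum_{A=1}^{m}-\langle N_{E_A},J_\Sigma(N_{E_A})\rangle=\lambda^2\bigl(-|\eta_1^1|^4-|\eta_2^1|^4+2\langle J\eta_1^1,\eta_2^1\rangle^2-2\langle \eta_1^1,\eta_2^1\rangle^2\bigr).
\]

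Next, at each point $p\in\Sigma$ where $\eta_1^1\neq 0$, the vectors $\eta_1^1$ and $J\eta_1^1$ are orthogonal and share the same length $|\eta_1^1|$; Bessel's inequality applied to $\eta_2^1$ against this pair yields
\[
\langle \eta_1^1,\eta_2^1\rangle^2+\langle J\eta_1^1,\eta_2^1\rangle^2\le |\eta_1^1|^2|\eta_2^1|^2.
\]
Substituting this bound into the previous display and completing the square, the pointwise right-hand side is at most
\[
-\lambda^2\bigl((|\eta_1^1|^2-|\eta_2^1|^2)^2+4\langle \eta_1^1,\eta_2^1\rangle^2\bigr)\le 0,
\]
with the locus $\eta_1^1=0$ handled trivially since both sides of Bessel vanish. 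Stability, however, forces the integral of the left-hand side over $\Sigma$ to be non-negative, so the non-positive integrand must vanish identically.

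The vanishing now gives $|\eta_1^1|=|\eta_2^1|$, $\langle \eta_1^1,\eta_2^1\rangle=0$, and equality in Bessel; the last condition places $\eta_2^1$ inside $\mathrm{span}\{\eta_1^1, J\eta_1^1\}$. Orthogonality to $\eta_1^1$ then forces $\eta_2^1$ to be a scalar multiple of $J\eta_1^1$, and the matching norms pin that multiple down to $\pm 1$, yielding $\eta_2^1=\pm J(\eta_1^1)$. I do not foresee a serious technical obstacle here: the argument is a clean combination of the general formula with a sharp Bessel estimate and a completion of the square; the only delicacy is ensuring the conclusion is uniform across points where $\eta_1^1$ vanishes, but there the identity holds trivially since $\eta_2^1=0$ as well.
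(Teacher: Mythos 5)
Your proposal is correct and follows essentially the same route as the paper: specialize Lemma \ref{GENERALC} to $d=2$, show the resulting pointwise quantity is non-positive, invoke stability to force it to vanish, and read off $|\eta_1^1|=|\eta_2^1|$, $\langle\eta_1^1,\eta_2^1\rangle=0$, and $\eta_2^1\in\mathrm{span}\{J\eta_1^1\}$. The only cosmetic difference is that the paper expands $\eta_2^1$ explicitly in the frame $\{\eta_1^1,J\eta_1^1,X\}$ and keeps the Bessel defect $-2|\eta_1^1|^2\langle\eta_2^1,X\rangle^2$ as a visible term, whereas you phrase the same computation as a sharp Bessel inequality with an equality analysis.
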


\begin{proof} Let $p\in \Sigma$. Since $d=2$, Equation (\ref{MAIN}) becomes

\begin{equation}
\begin{aligned}
\label{MAIND22}
\displaystyle\sum_{A=1}^{m} -\langle N_{E_{A}},J_{\Sigma}(N_{E_{A}})\rangle &=\lambda^{2}\Big ( \sum_{k=1}^{2}\sum_{l=1}^{2}\langle J(\eta^{1}_{k}),\eta^{1}_{l}\rangle^{2}-\langle \eta^{1}_{k},\eta^{1}_{l}\rangle^{2}\Big )\\
&=\lambda^{2}\Big (-|\eta_{1}^{1}|^{4}-|\eta_{2}^{1}|^{4}-2\langle\eta_{1}^{1},\eta_{2}^{1}\rangle^{2}+2\langle J(\eta_{1}^{1}),\eta_{2}^{1}\rangle^{2}\Big ).
\end{aligned}
\end{equation}

If $\eta_{1}^{1}=0$,

\begin{equation}
\label{TRp2}
   \displaystyle \sum_{A=1}^{m} -\langle N_{E_{A}},J_{\Sigma}(N_{E_{A}})\rangle=-\lambda^{2}|\eta_{2}^{1}|^{4}\leq 0.
\end{equation}

If $\eta_{1}^{1}\neq 0$, we can write $\eta_{2}^{1}$ in terms of $\eta_{1}^{1}$, $J(\eta_{1}^{1})$ and $X$ for some vector $X\in T_{\psi(p)}\mathbb{C}P^{\frac{m_{1}}{2}}$ which is unitary and orthogonal to  $\eta_{1}^{1}$ and $J(\eta_{1}^{1})$. In fact,

\begin{equation}
\label{eta12}
    \displaystyle \eta_{2}^{1}=
    \langle \eta_{2}^{1},\eta_{1}^{1}\rangle\frac{\eta_{1}^{1}}{|\eta_{1}^{1}|^{2}}
    +\langle\eta_{2}^{1},J(\eta_{1}^{1})\rangle\frac{J(\eta_{1}^{1})}{|\eta_{1}^{1}|^{2}}+\langle\eta_{2}^{1},X\rangle X.
\end{equation}
Hence,

\begin{center}
    $\displaystyle |\eta_{2}^{1}|^{2}= \frac{\langle\eta_{2}^{1},\eta_{1}^{1}\rangle^{2}}{|\eta_{1}^{1}|^{2}}
    +\frac{\langle\eta_{2}^{1},J(\eta_{1}^{1})\rangle^{2}}{|\eta_{1}^{1}|^{2}}+ \langle\eta_{2}^{1},X\rangle^{2}$,
\end{center}
and multiplying both sides by $|\eta_{1}^{1}|^{2}$,

\begin{center}
    $\displaystyle |\eta_{1}^{1}|^{2}|\eta_{2}^{1}|^{2}= \langle\eta_{2}^{1},\eta_{1}^{1}\rangle^{2}
    +\langle\eta_{2}^{1},J(\eta_{1}^{1})\rangle^{2}+ |\eta_{1}^{1}|^{2}\langle\eta_{2}^{1},X\rangle^{2}$.
\end{center}
Therefore,

\begin{center}
    $\displaystyle \langle\eta_{2}^{1},J(\eta_{1}^{1})\rangle^{2}= |\eta_{1}^{1}|^{2}|\eta_{2}^{1}|^{2}-\langle\eta_{2}^{1},\eta_{1}^{1}\rangle^{2}-|\eta_{1}^{1}|^{2}\langle\eta_{2}^{1},X\rangle^{2}.$
\end{center}
Replacing this last equation in Equation (\ref{MAIND22}), we have

\begin{equation}
\begin{aligned}
\label{TRp3}
\displaystyle\sum_{A=1}^{m} -\langle N_{E_{A}},J_{\Sigma}(N_{E_{A}})\rangle &=\lambda^{2}\Big (-|\eta_{1}^{1}|^{4}-|\eta_{2}^{1}|^{4}-2\langle\eta_{1}^{1},\eta_{2}^{1}\rangle^{2}
+2|\eta_{1}^{1}|^{2}|\eta_{2}^{1}|^{2}-2\langle\eta_{2}^{1},\eta_{1}^{1}\rangle^{2}-2|\eta_{1}^{1}|^{2}\langle\eta_{2}^{1},X\rangle^{2}\Big )\\ 
&=\lambda^{2}\Big (-|\eta_{1}^{1}|^{4}-|\eta_{2}^{1}|^{4}-4\langle\eta_{1}^{1},\eta_{2}^{1}\rangle^{2}
+2|\eta_{1}^{1}|^{2}|\eta_{2}^{1}|^{2}-2|\eta_{1}^{1}|^{2}\langle\eta_{2}^{1},X\rangle^{2}\Big )\\
\displaystyle &=\lambda^{2}\Big (-(|\eta_{1}^{1}|^{2}-|\eta_{2}^{1}|^{2})^{2}-4\langle\eta_{1}^{1},\eta_{2}^{1}\rangle^{2}
-2|\eta_{1}^{1}|^{2}\langle\eta_{2}^{1},X\rangle^{2}\Big )\leq 0.
\end{aligned}
\end{equation}

From Equations (\ref{TRp2}) and (\ref{TRp3}), we have
\begin{equation}
\label{signo}
    \displaystyle \sum_{A=1}^{m} -\langle N_{E_{A}},J_{\Sigma}(N_{E_{A}})\rangle\leq 0.
\end{equation}
Integrating Equation (\ref{signo}) and using the stability of $\Sigma$ gives us that
\begin{center}
$\displaystyle
\int_{\Sigma}\sum_{A=1}^{m} - \langle N_{E_{A}},J_{\Sigma}(N_{E_{A}})\rangle d\Sigma=0$.
\end{center}
From Equation (\ref{signo}), we know the integrand of the last equality has a sign, and hence

\begin{equation}
\label{F}
\sum_{A=1}^{m} - \langle N_{E_{A}},J_{\Sigma}(N_{E_{A}})\rangle=0.
\end{equation}

We have two options: if a point is such that $\eta_{1}^{1}=0$, using Equation (\ref{F}) in Equation (\ref{TRp2}), we have that $\eta_{2}^{1}=0$. On the other hand, if a point is such that $\eta_{1}^{1}\neq 0$, using Equation (\ref{F}) in Equation (\ref{TRp3}), we have that:

\begin{itemize}
    \item $|\eta_{1}^{1}|=|\eta_{2}^{1}|$
    \item $\langle \eta_{2}^{1},\eta_{1}^{1}\rangle=0$
    \item $\langle \eta_{2}^{1},X\rangle=0.$
\end{itemize}
Using the last two items in Equation (\ref{eta12}), we have

\begin{equation*}
    \displaystyle \eta_{2}^{1}=
    \langle\eta_{2}^{1},J(\eta_{1}^{1})\rangle\frac{J(\eta_{1}^{1})}{|\eta_{1}^{1}|^{2}}.
\end{equation*}
Therefore, $\eta_{2}^{1}$ and $J(\eta_{1}^{1})$ are parallel, and since $|J(\eta_{1}^{1})|=|\eta_{1}^{1}|=|\eta_{2}^{1}|$,

\begin{equation*}
    \eta_{2}^{1}=\pm J(\eta_{1}^{1}).
\end{equation*}
Notice we can include the case  $\eta_{1}^{1}=\eta_{2}^{1}=0$ in then last equality. 
\end{proof}

\begin{definition}
\label{structures}
Let $J_{1}$ and $J_{2}$ be two almost complex structures on $\mathbb{C}P^{\frac{m_{1}}{2}}\times \mathbb{C}P^{\frac{m_{2}}{2}} $ given by: 
\begin{center}
    $J_{1}(X,Y):=(J(X),J(Y))$ and $J_{2}(X,Y):=(J(X),-J(Y))$.
\end{center}
\end{definition}
\begin{definition}
Let $\Phi:\Sigma \rightarrow \mathbb{C}P^{\frac{m_{1}}{2}}\times \mathbb{C}P^{\frac{m_{2}}{2}}$ be an immersion and $p\in \Sigma$. For $i\in\{1,2\}$ fixed, we say that $T_{p}\Sigma\equiv d\Phi_{p}(T_{p}\Sigma)$ has structure $J_{i}$ if $J_{i}(T_{p}\Sigma)=T_{p}\Sigma$ or equivalently $J_{i}(N_{p}\Sigma)=N_{p}\Sigma$. If for all $p\in \Sigma$, we have that $T_{p}\Sigma$ has structure $J_{i}$, we say that $\Phi$ is a complex immersion under the structure $J_{i}$.
\end{definition}

\begin{theorem}
\label{z2}
Let $\Phi:\Sigma \rightarrow \bar{M}:= \mathbb{C}P^{\frac{m_{1}}{2}}\times \mathbb{C}P^{\frac{m_{2}}{2}} $ be a compact stable minimal immersion of codimension $d=2$ and dimension $n$. Then, $\Phi$ is a complex immersion under the structure $J_{1}$ or $J_{2}$.
\end{theorem}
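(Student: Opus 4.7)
The plan is to first obtain pointwise information about the normal bundle and then propagate it globally via a real-analyticity argument.

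First, I would apply Lemma \ref{cod2c} to an orthonormal basis $\{\eta_1,\eta_2\}$ of $N_p\Sigma$ to deduce $\eta_2^1 = \varepsilon_1(p)\, J(\eta_1^1)$ for some $\varepsilon_1(p) \in \{+1,-1\}$, with the convention that $\eta_2^1 = 0$ whenever $\eta_1^1 = 0$. The same argument from Subsection \ref{gfc}, applied to the second factor (i.e., using vectors of the form $(0,v)$ in place of $(v,0)$ and the Veronese embedding of $\mathbb{C}P^{\frac{m_2}{2}}$), gives the companion identity $\eta_2^2 = \varepsilon_2(p)\, J(\eta_1^2)$, with analogous conventions. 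A direct calculation using Definition \ref{structures} then shows that when $\eta_1^1\neq 0$ and $\eta_1^2\neq 0$, the normal space $N_p\Sigma$ is invariant under $J_1$ when $\varepsilon_1(p)=\varepsilon_2(p)$ (since then $\eta_2 = \pm J_1\eta_1$) and under $J_2$ when $\varepsilon_1(p)=-\varepsilon_2(p)$, while if $\eta_1^1=0$ or $\eta_1^2=0$ it is automatically invariant under both. In particular, at every point of $\Sigma$ the normal space carries at least one of the two structures; and because the parity $\varepsilon_1\varepsilon_2$ is preserved under a swap of the basis, this dichotomy is basis-free.

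Next, I would package this pointwise dichotomy into two globally defined non-negative smooth invariants on $\Sigma$. For each $i\in\{1,2\}$, the map $\alpha_i(p) := P_{T_p\Sigma}\circ (J_i)|_{N_p\Sigma}\colon N_p\Sigma \to T_p\Sigma$ vanishes exactly when $N_p\Sigma$ is $J_i$-invariant. Setting $g_i(p):=|P_{T_p\Sigma}(J_i\eta_1)|^2+|P_{T_p\Sigma}(J_i\eta_2)|^2$ in any local orthonormal frame of $N\Sigma$ (the value being frame-independent), I obtain smooth non-negative functions $g_1,g_2$ on $\Sigma$. By the previous paragraph, $g_1(p)\cdot g_2(p) = 0$ for every $p\in\Sigma$.

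The final step invokes real analyticity. Since $\bar M=\mathbb{C}P^{\frac{m_1}{2}}\times \mathbb{C}P^{\frac{m_2}{2}}$ is a real-analytic Riemannian manifold and $\Phi$ is minimal (hence a solution of a quasi-linear elliptic system with real-analytic coefficients), Morrey's interior regularity theorem guarantees that $\Phi$ is real-analytic. Consequently the splitting $T\bar M|_\Sigma = T\Sigma\oplus N\Sigma$, the restrictions of $J_1,J_2$, and therefore the scalars $g_1,g_2$ vary real-analytically on the connected manifold $\Sigma$. Suppose for contradiction that neither $g_1$ nor $g_2$ vanishes identically. Since $\Sigma$ is connected and $g_1$ is real-analytic, the open set $\{g_1\neq 0\}$ is dense in $\Sigma$; on this set the identity $g_1 g_2\equiv 0$ forces $g_2\equiv 0$, and by density and continuity $g_2\equiv 0$ on all of $\Sigma$, a contradiction. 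Equivalently, as the introduction suggests, if a curve $\gamma$ joined a point of $J_1$-type to a point of $J_2$-type, then $g(t):=g_1(\gamma(t))-g_2(\gamma(t))$ would be a real-analytic function that changes sign while being forced by $g_1 g_2\equiv 0$ to vanish to infinite order at any transition. Hence $g_1\equiv 0$ or $g_2\equiv 0$, which is exactly the statement that $\Phi$ is a complex immersion under $J_1$ or $J_2$.

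The main obstacle I anticipate is the rigorous derivation of the second-factor analogue of Lemma \ref{cod2c}: Lemma \ref{GENERALC} is stated only for projections of vectors of the form $(v,0)$ coming from the Veronese embedding of the first factor, so one must carefully re-run the entire computation of Subsection \ref{gfc} with the Veronese embedding of $\mathbb{C}P^{\frac{m_2}{2}}$ and vectors $(0,w)$ to obtain the companion identity $\eta_2^2 = \pm J(\eta_1^2)$ and then integrate it against stability. Once this symmetric version is in place, the pointwise case analysis, the construction of $g_1, g_2$, and the analyticity conclusion are essentially formal.
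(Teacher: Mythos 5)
Your proposal is correct and rests on the same two pillars as the paper's proof: the pointwise identities $\eta_2^1=\pm J(\eta_1^1)$ and $\eta_2^2=\pm J(\eta_1^2)$ (Lemma \ref{cod2c} plus its second-factor analogue, which, as you note, requires re-running Subsection \ref{gfc} with the Veronese embedding of the second factor), and real-analytic continuation to rule out a mixture of the two structures. Where you genuinely diverge is in how the continuation is executed. The paper fixes a point $p$ with both projections nonzero, parallel-transports a normal frame along geodesics to get $N_1,N_2$ on $\Sigma\setminus cut(p)$, studies the scalar $f(s)=\langle J(N_1^1),N_2^1\rangle_{\gamma(s)}$ along a geodesic joining two points of different type, shows $f=\pm g$ with $g=|N_2^1|^2$ forced to vanish to infinite order at the sign change, and then needs a separate Part II to extend the structure across $cut(p)$. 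You instead package the dichotomy into the frame-independent, globally defined invariants $g_i(p)=\lVert P_{T_p\Sigma}\circ J_i|_{N_p\Sigma}\rVert^2$, observe $g_1g_2\equiv 0$, and apply the identity theorem for real-analytic functions on the connected manifold $\Sigma$. This buys you a cleaner globalization: no choice of base point, no parallel transport, and no cut-locus case analysis. The price is the same as the paper pays in Lemma \ref{realanal}: you must justify that $\Sigma$ carries a real-analytic structure in which $\Phi$, the induced metric, the normal projection, and hence $g_1,g_2$ are real analytic (Morrey applied to the minimal immersion system in suitable local coordinates, plus an analytic Gram--Schmidt for local normal frames); your appeal to Morrey is at the same level of rigor as the paper's, so I see no gap, only a detail to spell out if you write this up in full.
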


\begin{proof}
From Lemma \ref{cod2c}, we have that for $q\in \Sigma$ and $\{\eta_{1},\eta_{2}\}$ an orthonormal basis of $N_{q}\Sigma$,

\begin{center}
    $\eta_{2}^{1}=\pm J(\eta_{1}^{1})$ and $\eta_{2}^{2}=\pm J(\eta_{1}^{2})$,
\end{center}
and then
\begin{center}
    $|\eta^{1}_{1}|=|\eta^{1}_{2}|$ and $|\eta^{2}_{1}|=|\eta^{2}_{2}|$.
\end{center}
\begin{remark}
\label{2s}
Notice that if $q\in \Sigma$ is such that $|\eta_{1}^{1}|=|\eta_{2}^{1}|=0$ or $|\eta_{1}^{2}|=|\eta_{2}^{2}|=0$,   $T_{q}\Sigma$ has both structures $J_{1}$ and $J_{2}$. 
\end{remark}
If for all $q \in \Sigma$, we have that $|\eta_{1}^{1}|=|\eta_{2}^{1}|=0$ or $|\eta_{1}^{2}|=|\eta_{2}^{2}|=0$, then all points of $\Sigma$ have both structures and then we are done. Otherwise there exists a point $p\in \Sigma$ such that 
\begin{equation}
\label{sig}
|\eta_{1}^{1}|=|\eta_{2}^{1}|\neq0 \hspace{0.2cm} \text{and} \hspace{0.2cm} |\eta_{1}^{2}|=|\eta_{2}^{2}|\neq0,
\end{equation}
for $\{\eta_{1},\eta_{2}\}$ an orthonormal basis for $N_{p}\Sigma$. Denote by $cut(p)$ the cut locus of the point $p$ in $\Sigma$. \\

In Part I we will show that $\Sigma\setminus cut(p)$ has a single complex structure and in Part II we will show that we can extend this complex structure to the set $cut(p)$.\\

\textbf{Part I}\\
By parallel transport of the  orthonormal basis $\{\eta_{1},\eta_{2}\}$ in $N_{p}\Sigma$ along geodesics of $\Sigma$ under the normal connection of $\Sigma$ in $\bar{M}$, we define normal vector fields $N_{1}$ and $N_{2}$ in $\Sigma\setminus cut(p)$. Using Lemma \ref{cod2c}, we have that for an arbitrary point $\tau \in \Sigma\setminus cut(p)$,\\

\textbf{Set of equations I}
\begin{equation}
    \label{p1}
    N_{2}^{1}(\tau)=J(N_{1}^{1})(\tau)
\end{equation}
or 
\begin{equation}
    \label{p2}
    N_{2}^{1}(\tau)=-J(N_{1}^{1})(\tau)
\end{equation}
and 
\begin{equation}
    \label{q1}
    N_{2}^{2}(\tau)=J(N_{1}^{2})(\tau)
\end{equation}
or 
\begin{equation}
    \label{q2}
    N_{2}^{2}(\tau)=-J(N_{1}^{2})(\tau).
\end{equation}
Hence, there are four possible options at $\tau \in \Sigma \setminus cut(p)$:\\

\textbf{Set of equations II}

\begin{itemize}
    \item If $\tau$ satisfies Equations (\ref{p1}) and (\ref{q2}), we have
    \begin{equation}
    \label{r1}
        J_{2}(N_{1})(\tau)=N_{2}(\tau).
    \end{equation}
    
    \item  If $\tau$ satisfies Equations (\ref{p2}) and (\ref{q2}), we have
    \begin{equation}
    \label{r2}
        J_{1}(N_{1})(\tau)=-N_{2}(\tau).
    \end{equation}

    \item  If $\tau$ satisfies Equations (\ref{p1}) and (\ref{q1}), we have
    \begin{equation}
    \label{r3}
        J_{1}(N_{1})(\tau)=N_{2}(\tau).
    \end{equation}
    
    \item  If $\tau$ satisfies Equations (\ref{p2}) and (\ref{q1}), we have
    \begin{equation}
    \label{r4}
        J_{2}(N_{1})(\tau)=-N_{2}(\tau).
    \end{equation}
    
\end{itemize}

Therefore, for $\tau\in \Sigma\setminus cut(p)$, $T_{\tau}\Sigma$ has the structure $J_{1}$ or $J_{2}$.
Without loss of generality, assume that $p$ has structure $J_{1}$. We will prove that all the points in  $\Sigma\setminus cut(p)$ have the same structure $J_{1}$. 

If all the points in $\Sigma\setminus cut(p)$ are such that their tangent space has the structure $J_{1}$, then we are done.  Otherwise there exists a point $r$ in $\Sigma \setminus cut(p)$ such that $T_{r}\Sigma$ has complex structure $J_{2}$ and not $J_{1}$. Recall that from Equation (\ref{sig}) and the construction of $N_{1}$ and $N_{2}$ we have 
\begin{equation}
\label{z0}
    |N_{1}^{1}|(p)=|N_{2}^{1}|(p)\neq0 \hspace{0.2cm}\text{and}\hspace{0.2cm} |N_{1}^{2}|(p)=|N_{2}^{2}|(p)\neq0. 
\end{equation}
And since $T_{r}\Sigma$ does not have the structure $J_{1}$, by Remark \ref{2s}, 
\begin{equation}
\label{z1}
    |N_{1}^{1}|(r)=|N_{2}^{1}|(r)\neq0 \hspace{0.2cm}\text{and}\hspace{0.2cm} |N_{1}^{2}|(r)=|N_{2}^{2}|(r)\neq0. 
\end{equation}
Without loss of generality, suppose $p$ satisfies Equation (\ref{r2}), i.e. $J_{1}(N_{1})(p)=-N_{2}(p)$ and $r$ satisfies Equation (\ref{r1}), i.e. $J_{2}(N_{1})(r)=N_{2}(r)$ (see Remark \ref{tabla}). According to the set of equations I, 

\begin{equation}
\label{w0}
J(N_{1}^{1})(p)=-N^{1}_{2}(p)\text{,}\hspace{0.2cm} J(N_{1}^{2})(p)=-N_{2}^{2}(p), 
\end{equation}
    and
\begin{equation}
\label{w1}
    J(N_{1}^{1})(r)=N_{2}^{1}(r)\text{,}\hspace{0.2cm} J(N_{1}^{2})(r)=-N_{2}^{2}(r).
\end{equation}
Let $\gamma:=\gamma(s,p,v):[0,1]\rightarrow \Sigma$ be the unique geodesic contained in $\Sigma\setminus cut(p)$ such that $\gamma(0)=p$ and $\gamma(1)=r$. Let us define $f:[0,1]\rightarrow \mathbb{R}$ in the following way,

\begin{center}
    $f(s):=\langle J(N^{1}_{1}),N_{2}^{1}\rangle_{\gamma(s)}$.
\end{center}
Using Equations (\ref{w0}) and (\ref{z0}) and Equations (\ref{w1}) and (\ref{z1}), $f$ is such that
\begin{center}
    $f(0)=\langle J(N^{1}_{1}),N_{2}^{1}\rangle_{p}=-|N_{2}^{1}|^{2}(p)< 0$, and\\ $f(1)=\langle J(N^{1}_{1}),N_{2}^{1}\rangle_{r}=|N_{2}^{1}|^{2}(r)>0$,
\end{center}
respectively. Since $f$ is smooth, there exists a point $t\in (0,1)$ such that,
\begin{center}
$f(t)=0$, $f(s)>0$ for $s\in (t,t+\epsilon)$ and $f(s)\leq 0$ for $s\in (t-\epsilon,t)$. 
\end{center}
Let $g:(t-\epsilon,t+\epsilon)\rightarrow \mathbb{R}$ be the function given by $g(s):= |N_{2}^{1}|_{\gamma(s)}^{2}$. Since $J(N_{1}^{1})=\pm N_{2}^{1}$,

\begin{center}
$
f(s)= \left\{ \begin{array}{lcc}
             -g(s)&   if  & s\in (t-\epsilon,t] \\
             \\ g(s) &  if & s\in (t,t+\epsilon).
             \end{array}
   \right.$
\end{center}
Since $f$ is smooth,  for all $k\geq 0, k\in \mathbb{N}$
\begin{center}
$
f^{(k)}(s)= \left\{ \begin{array}{lcc}
             -g^{(k)}(s)&   if  & s\in (t-\epsilon,t] \\
             \\ g^{(k)}(s) &  if & t\in (t,t+\epsilon)
             \end{array}
   \right.$
\end{center}
is continuous, and thus $g^{(k)}(t)=0$.\\

The function $g$ is real analytic because it is a composition of real analytic functions (see the technical Lemma \ref{realanal}). Since $g$ is an analytic function such that $g^{(k)}(t)=0$ for all $k\geq 0$, $g=0$. This is a contradiction because $g(t+\frac{\epsilon}{2})=f(t+\frac{\epsilon}{2})>0$. Therefore, all the points in $\Sigma\setminus cut(p)$ are such that their tangent space has the structure $J_{1}$.\\

The proof of Lemma \ref{realanal} can be omitted on a first reading; the reader may wish to continue to Part II.

\begin{lemma} \label{realanal} The function $g:(t-\epsilon,t+\epsilon)\rightarrow \mathbb{R}$ given by  $g(s):=|N_{2}^{1}|_{\gamma(s)}^{2}$ is a real analytic function.
\begin{proof}
Let 
\begin{equation}
\label{chart}
\begin{aligned}
    \displaystyle Y:U\subset \mathbb{R}^{n+d} & \rightarrow \bar{U}\subset \bar{M}\\
    (y^{1},\ldots,y^{n+d}) &\rightarrow Y(y^{1},\ldots,y^{n+d})
    \end{aligned}
\end{equation}
 be a coordinate chart of $\bar{M}$ around $\Phi(\gamma(t))$ compatible with the real analytic structure of $\bar{M}$ and 
 \begin{equation*}
 \begin{aligned}
     X:W\subset \mathbb{R}^{n} &\rightarrow \bar{W}\subset \Sigma \\
     (x^{1},\ldots,x^{n})&\rightarrow X(x^{1},\ldots,x^{n})
     \end{aligned}
 \end{equation*} 
 be the coordinate chart of $\Sigma$ around $\gamma(t)$. Notice that we can assume that $\gamma (t-\epsilon,t+\epsilon)\subset \bar{W}$, otherwise we can just modify $\epsilon$.  
If $y$ is the local representation of $\Phi$ in the coordinate charts $X$ and $Y$,
\begin{equation*}
\begin{aligned}
y:W &\rightarrow \mathbb{R}^{n+d}\\
x=(x^{1},\ldots,x^{n})&\rightarrow y(x)=(y^{1}(x),\ldots,y^{n+d}(x)).
\end{aligned}
\end{equation*}
We also can assume that $y(W)\subset U$, otherwise we can make the open set $W$ smaller. \\

Since $\Phi$ is a minimal immersion, $y$ satisfies the following non-linear elliptic system of partial differential equations (see Section 52 in \cite{eisenhart2016riemannian}):

\begin{equation}
\label{minimal}
     g^{ij}\frac{\partial^{2} y^{\beta}}{\partial x^{i}\partial x^{j} }+\frac{g^{ij}a^{\beta\sigma}}{2}(\frac{\partial a_{\mu \sigma}}{\partial y^{v}}+\frac{\partial a_{v \sigma}}{\partial y^{\mu}}-\frac{\partial a_{\mu v}}{\partial y^{\sigma}})\frac{\partial y^{\mu}}{\partial x^{i}}\frac{\partial y^{v}}{\partial x^{j}}=0,\hspace{0.5cm }\beta=1,\ldots,n+d,
\end{equation}
where $i,j$ are summing in $\{1,\ldots,n\}$, $\sigma,\mu,v$ are summing in $\{1,\ldots,n+d\}$, $a_{\alpha \beta}$ is the Riemannian metric in $\bar{M}$, and $g_{ij}$ is the metric in $\Sigma$ which is given by:
\begin{equation}
\label{metric}
\displaystyle g_{ij}=a_{\alpha\theta}\frac{\partial y^{\alpha}}{\partial x^{i}} \frac{\partial y^{\theta}}{\partial x^{j}}. 
\end{equation}
The system (\ref{minimal}) is real analytic because $\bar{M}$ is a real analytic Riemannian manifold. Then, using the main result in \cite{morrey1958analyticity}, the local representation of $\Phi$, $y(x)$, is also real analytic. We can also conclude, from (\ref{metric}) that the metric $g_{ij}$ is real analytic as a function of $x$. In the same manner we can see  that the local representation $\bar{\gamma}:=X^{-1}\circ \gamma$ of the geodesic $\gamma$ in $\Sigma$ around $\gamma(t)$ is real analytic. \\

Using the local coordinates described above we can see that $g(s)$ is the composition of the following functions:
\begin{center}

    $s\xrightarrow{\bar{\gamma}} \bar{\gamma}(s) \xrightarrow{N_{2}} N_{2}(\bar{\gamma}(s))\xrightarrow{P^{1}} N_{2}^{1}(\bar{\gamma}(s))\xrightarrow{|.|^{2}} |N_{2}^{1}(\bar{\gamma}(s))|^{2}$.
\end{center}
We already proved that  $\bar{\gamma}$ is a real analytic function. So we now we have to prove that the other functions described above are real analytic too.\\

For convenience, we will take the coordinate chart (\ref{chart}) given by slice coordinates for $\Sigma$ in $\bar{M}$ around $\Phi(\gamma(t))$. By using the process of Gram–Schmidt we can construct in $U$ vector fields $E_{1},E_{2}$ that are real analytic as functions of $(y^{1},\ldots,y^{n+d})\in U$ and such that for every point $x\in W$, $\{E_{1}(y(x)),E_{2}(y(x))\}$ is an orthonormal basis of $N_{x}\Sigma$. Therefore,  $N_{2}$ can be seen as,
    \begin{equation}
    N(s):=N_{2}(\bar{\gamma}(s))=c^{z}(s)E_{z}(y(\bar{\gamma}(s))); \hspace{0.2cm}z=1,2.    
    \end{equation}
Recall $N_{2}$ is the parallel transport of a fixed unitary vector $\eta_{2}\in N_{p}\Sigma$ along the geodesic $\gamma(s)=\gamma(s,p,v)$ under the normal connection. Then, 

\begin{equation}
\label{tp}
    \nabla^{\perp}_{\frac{d \bar{\gamma}}{ds}}N(s)=0, \hspace{0.5cm} N(s_{0})=\eta,
\end{equation}
for some $s_{0}\in (t-\epsilon,t+\epsilon)$ and $\eta \in N_{\bar{\gamma}(s_{o})}\Sigma$. If $\eta=a^{z}E_{z}(y(\bar{\gamma}(s_{0})))$, for $a^{z}\in \mathbb{R}$, from (\ref{tp}) we have the following system of ordinary differential equations,\\

$\displaystyle
\left\{ \begin{array}{lcc}
             \displaystyle  \frac{d c^{z}(s)}{d s }=-c^{l}(s)\langle\bar{\nabla}_{\frac{d \bar{\gamma}}{ds}}E_{l}(y(\bar{\gamma}(s))),E_{z}(y(\bar{\gamma}(s)))\rangle:=G_{z}(c,s) \\
             \\c^{z}(s_{0})=a^{z},\hspace{0.5cm} z=1,2
             \end{array}
            \right.$\\
where $c=(c^{1},c^{2})$. Since $G:=(G_{1},G_{2})$ is  real analytic as a function of $(c,s)$, the solution $c(s)$ is real analytic, and therefore $N(s)=N_{2}(\bar{\gamma}(s))$ is real analytic. \\

It is straightforward to see that the projection $P^{1}$ of a real analytic vector field is also real analytic. Therefore, 
\begin{center}
$\displaystyle N^{1}_{2}(\bar{\gamma}(s))=\sum_{\beta=1}^{n+d}b^{\beta}(y(\bar{\gamma}(s)))\frac{\partial}{\partial y^{\beta}}\biggr\vert_{y(\bar{\gamma}(s))}$,
\end{center}
where $b^{\beta}$ are real analytic functions. Now, 

\begin{center}
    $\displaystyle|N^{1}_{2}(\bar{\gamma}(s))|^{2}=\sum_{\alpha,\beta=1}^{n+d}b^{\alpha}(y(\bar{\gamma}(s)))b^{\beta}(y(\bar{\gamma}(s)))a_{\alpha \beta}(y(\bar{\gamma}(s)))$,
\end{center}
which involves only products, sums, and compositions of real analytic functions. Therefore,  $g(s)$ is real analytic.

\end{proof}
\end{lemma}

\textbf{Part II}\\
Now we will show that we can extend that structure to the points in $cut(p)$.
\begin{remark}
\label{localstructure}
Notice that in Part I we have shown that if a point $q\in \Sigma$ is such that $T_{q}\Sigma$ has an structure ($J_{1}$ or $J_{2}$), and if for $\{\eta_{1},\eta_{2}\}$ a basis of $N_{q}\Sigma$ we have that 
\begin{equation}
\label{condition}
|\eta_{1}^{1}|=|\eta_{2}^{1}|\neq0 \hspace{0.2cm} \text{and} \hspace{0.2cm} |\eta_{1}^{2}|=|\eta_{2}^{2}|\neq0,
\end{equation}
then $\Sigma \setminus cut(q)$ has the same structure as $q$.
\end{remark}

Let $b$ be a point in  $cut(p)$. If $b$ is such that one of the projections of its normal vectors is zero, by Remark \ref{2s}, $b$ has both structures and we are done. Otherwise, $b$ is such that  (\ref{condition}) is satisfied for $\{\eta_{1},\eta_{2}\}$ a basis of $N_{b}\Sigma$. Let $V$ be a normal neighborhood around $b$ in $\Sigma$, where we can define orthonormal normal vector fields $N_{1}$ and $N_{2}$ such that 
\begin{equation}
\label{secondcondition}
|N_{1}^{1}|=|N_{2}^{1}|\neq0 \hspace{0.2cm} \text{and} \hspace{0.2cm} |N_{1}^{2}|=|N_{2}^{2}|\neq0,
\end{equation}
with $N_{1}(b)=\eta_{1}$ and $N_{2}(b)=\eta_{2}$. Let $\alpha:[0,l]\rightarrow \Sigma$ be a geodesic of $\Sigma$, such that $\alpha(0)=p$, $\alpha(l)=b$ and $\alpha([0,l])\cap cut(p)=b$.
 There exists $a<l$ such that $\alpha(a)\in V$.
Since $\alpha(a)\notin cut(p)$,  $\alpha(a)$ has structure $J_{1}$. Moreover, since $\alpha(a)\in V$, $b\notin cut(\alpha(a))$, and therefore by Remark \ref{localstructure} and (\ref{secondcondition}), $b$ has the same structure as $\alpha(a)$, i.e.,  $J_{1}$.

\end{proof}

\begin{remark}
\label{tabla}For the other cases, we have the following table,
\begin{center}
 \begin{tabular}{|c|c|c|c|} 
 \hline
 \hspace{0.2cm} Equation satisfied by $p$ \hspace{0.2cm}& \hspace{0.2cm} Equation satisfied by $r$ \hspace{0.2cm} & $f(s)$ & $g(s)$ \\
 \hline
 (\ref{r2})& (\ref{r4}) & $\langle J(N_{1}^{2}),N_{2}^{2}\rangle_{\gamma(s)}$ & $|N_{2}^{2}|_{\gamma(s)}^{2}$ \\ 
 \hline
 (\ref{r3}) & (\ref{r1}) & $-\langle J(N_{1}^{2}),N_{2}^{2}\rangle_{\gamma(s)}$ & $|N_{2}^{2}|_{\gamma(s)}^{2}$ \\
 \hline
 (\ref{r3}) & (\ref{r4}) & $-\langle J(N_{1}^{1}),N_{2}^{1}\rangle_{\gamma(s)}$ & $|N_{2}^{1}|_{\gamma(s)}^{2}$ \\ [1ex] 
 \hline
\end{tabular}
\end{center}
\end{remark}

Using the same arguments used in the proofs of Lemma \ref{cod2c} and  Theorem \ref{z2} and using Equation (\ref{MAIN2}), we have the following:

\begin{lemma}
Let $\Phi=(\psi,\phi):\Sigma \rightarrow \mathbb{C}P^{\frac{m_{1}}{2}}\times M$ be a compact stable minimal immersion of dimension $n=2$, where $M$ is any Riemannian manifold of dimension $m_{2}$. Then, $e_{2}^{1}=\pm J(e_{1}^{1})$, where $\{e_{1},e_{2}\}$ is an orthonormal basis of $T_{p}\Sigma$, $ p \in \Sigma$, $J$ is the complex structure of $\mathbb{C}P^{\frac{m_{1}}{2}}$, and $e_{i}^{1}$ is the projection of $e_{i}$ in $T_{\psi(p)}\mathbb{C}P^{\frac{m_{1}}{2}}$.
\end{lemma}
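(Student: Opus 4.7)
The plan is to run the argument of Lemma \ref{cod2c} verbatim, with the roles of normal and tangent vectors interchanged, using Equation (\ref{MAIN2}) in place of Equation (\ref{MAIN}). Fix $p\in\Sigma$ and an orthonormal basis $\{e_1,e_2\}$ of $T_p\Sigma$. With $n=2$, Equation (\ref{MAIN2}) becomes
\begin{equation*}
\sum_{A=1}^{m}-\langle N_{E_A},J_\Sigma(N_{E_A})\rangle=\lambda^2\Big(-|e_1^1|^4-|e_2^1|^4-2\langle e_1^1,e_2^1\rangle^2+2\langle J(e_1^1),e_2^1\rangle^2\Big).
\end{equation*}

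Next, I would split into two cases exactly as in Lemma \ref{cod2c}. If $e_1^1=0$, the right-hand side reduces to $-\lambda^2|e_2^1|^4\le 0$. If $e_1^1\ne 0$, I decompose
\begin{equation*}
e_2^1=\langle e_2^1,e_1^1\rangle\frac{e_1^1}{|e_1^1|^2}+\langle e_2^1,J(e_1^1)\rangle\frac{J(e_1^1)}{|e_1^1|^2}+\langle e_2^1,X\rangle X,
\end{equation*}
where $X$ is a unit vector in $T_{\psi(p)}\mathbb{C}P^{m_1/2}$ orthogonal to $e_1^1$ and $J(e_1^1)$, and use $|e_2^1|^2|e_1^1|^2=\langle e_2^1,e_1^1\rangle^2+\langle e_2^1,J(e_1^1)\rangle^2+|e_1^1|^2\langle e_2^1,X\rangle^2$ to substitute for $\langle J(e_1^1),e_2^1\rangle^2$. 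The same algebra as in Lemma \ref{cod2c} then yields
\begin{equation*}
\sum_{A=1}^{m}-\langle N_{E_A},J_\Sigma(N_{E_A})\rangle=\lambda^2\Big(-(|e_1^1|^2-|e_2^1|^2)^2-4\langle e_1^1,e_2^1\rangle^2-2|e_1^1|^2\langle e_2^1,X\rangle^2\Big)\le 0,
\end{equation*}
so in either case the pointwise integrand is nonpositive.

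Integrating and invoking stability of $\Phi$ gives $0\le \sum_A -\int_\Sigma\langle N_{E_A},J_\Sigma(N_{E_A})\rangle\,d\Sigma\le 0$, so the integral vanishes; since the integrand has a definite sign, it vanishes pointwise. At a point with $e_1^1=0$ this forces $e_2^1=0$, so trivially $e_2^1=\pm J(e_1^1)$. At a point with $e_1^1\ne 0$, vanishing of the three nonpositive summands above forces $|e_1^1|=|e_2^1|$, $\langle e_1^1,e_2^1\rangle=0$ and $\langle e_2^1,X\rangle=0$, and substituting back into the expansion of $e_2^1$ gives $e_2^1=\langle e_2^1,J(e_1^1)\rangle J(e_1^1)/|e_1^1|^2$, i.e. $e_2^1$ is collinear with $J(e_1^1)$ of equal length, so $e_2^1=\pm J(e_1^1)$.

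There is no real obstacle: the whole proof is a formal transcription of Lemma \ref{cod2c}, with the tangent-frame identity (\ref{MAIN2}) replacing the normal-frame identity (\ref{MAIN}). The only point worth double-checking is the bookkeeping in the decomposition of $e_2^1$ (one needs $\dim_\mathbb{R}\mathbb{C}P^{m_1/2}=m_1\ge 3$ so that the auxiliary vector $X$ orthogonal to $\{e_1^1,J(e_1^1)\}$ exists; when $m_1=2$ the $X$-term simply drops out and the argument is strictly easier).
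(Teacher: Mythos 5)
Your proposal is correct and is essentially the paper's own argument: the paper derives this lemma precisely by rerunning Lemma \ref{cod2c} with tangent vectors in place of normal vectors, using Equation (\ref{MAIN2}) instead of Equation (\ref{MAIN}), and your expansion, case split, sign analysis, and conclusion all match. Your remark about the auxiliary vector $X$ when $m_{1}=2$ is a sensible extra check (the same point is implicit in the paper's proof of Lemma \ref{cod2c}), and your resolution of it is correct.
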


\begin{theorem}
Let $\Phi:\Sigma \rightarrow \mathbb{C}P^{\frac{m_{1}}{2}}\times \mathbb{C}P^{\frac{m_{2}}{2}}$ be a compact stable minimal immersion of dimension $n=2$. Then, $\Phi$ is a complex immersion under the structure $J_{1}$ or $J_{2}$.
\end{theorem}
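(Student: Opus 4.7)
The plan is to mirror the proof of Theorem \ref{z2} almost verbatim, but with tangent vectors playing the role that normal vectors played there. First I would apply the preceding lemma (the $n=2$ analogue of Lemma \ref{cod2c}) to get $e_2^1 = \pm J(e_1^1)$, where $\{e_1,e_2\}$ is an orthonormal basis of $T_p\Sigma$. Then, since the roles of the two factors $\mathbb{C}P^{\frac{m_1}{2}}$ and $\mathbb{C}P^{\frac{m_2}{2}}$ are symmetric in this product, applying the same lemma with the factors swapped yields $e_2^2 = \pm J(e_1^2)$. In particular $|e_1^1|=|e_2^1|$ and $|e_1^2|=|e_2^2|$, and if either of these common values is zero at $p$ then $T_p\Sigma$ trivially admits both structures $J_1$ and $J_2$; otherwise $T_p\Sigma$ carries exactly one of the two, determined by the signs in the two identities above.

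If every point of $\Sigma$ has one of the projections vanishing, we are done. Otherwise, fix a point $p\in\Sigma$ at which both $|e_1^1|(p)\neq 0$ and $|e_1^2|(p)\neq 0$. Without loss of generality assume $p$ has structure $J_1$. On $\Sigma\setminus\mathrm{cut}(p)$ I would define global tangent vector fields $E_1,E_2$ by parallel transporting $\{e_1,e_2\}$ along radial geodesics from $p$ using the Levi-Civita connection of $\Sigma$. At each point of $\Sigma\setminus\mathrm{cut}(p)$, the pointwise identities $E_2^1 = \pm J(E_1^1)$ and $E_2^2 = \pm J(E_1^2)$ still hold (by the lemma), giving at each point either structure $J_1$ or $J_2$.

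Suppose for contradiction there were a point $r\in\Sigma\setminus\mathrm{cut}(p)$ of structure $J_2$ (not $J_1$), with both projections $E_i^1(r), E_i^2(r)$ nonzero. Joining $p$ to $r$ by the unique minimizing geodesic $\gamma\subset\Sigma\setminus\mathrm{cut}(p)$, I would consider the smooth function
\begin{equation*}
f(s) := \langle J(E_1^1), E_2^1\rangle_{\gamma(s)},
\end{equation*}
which at the endpoints equals $\mp |E_2^1|^2$ (one at each endpoint with opposite signs by the case analysis mirroring equations \eqref{r1}--\eqref{r4} and Remark \ref{tabla}, depending on which of the four branches $p$ and $r$ satisfy; if needed, one passes to $\langle J(E_1^2),E_2^2\rangle$ instead). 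Thus $f$ changes sign, while $|f|=g$ for the auxiliary function $g(s):=|E_2^1|^2_{\gamma(s)}$. A first vanishing point $t$ of $f$ then forces $g^{(k)}(t)=0$ for all $k\geq 0$. Exactly as in Lemma \ref{realanal}, $g$ is real analytic in $s$ because the immersion is minimal (hence analytic by Morrey's theorem in analytic charts of $\mathbb{C}P^{m_1/2}\times \mathbb{C}P^{m_2/2}$), the geodesic $\gamma$ is analytic, and parallel transport of $E_1,E_2$ along $\gamma$ satisfies an analytic ODE. Hence $g\equiv 0$, contradicting $g(t+\epsilon/2)>0$. This forces every point of $\Sigma\setminus\mathrm{cut}(p)$ to share structure $J_1$.

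The extension to $\mathrm{cut}(p)$ follows precisely Part II of the proof of Theorem \ref{z2}: for $b\in\mathrm{cut}(p)$ either a projection vanishes (so $b$ admits both structures) or else $b$ lies in a normal neighborhood $V$ of itself in which vector fields $E_1,E_2$ are well defined with both projections nonzero; pick a geodesic $\alpha$ from $p$ to $b$ and a parameter value $a$ with $\alpha(a)\in V\setminus\{b\}$. Then $\alpha(a)\notin\mathrm{cut}(p)$ has structure $J_1$, and $b\notin\mathrm{cut}(\alpha(a))$, so the argument of Part I applied with base point $\alpha(a)$ shows $b$ has structure $J_1$ as well. The main obstacle, exactly as in the proof of Theorem \ref{z2}, is the real analyticity of $g$; but this is a direct translation of Lemma \ref{realanal} with tangent vectors in place of normal vectors (the parallel transport ODE in slice coordinates has analytic coefficients), so no genuinely new difficulty arises.
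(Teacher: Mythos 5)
Your proposal is correct and is essentially the proof the paper intends: the paper explicitly states that the dimension-two theorem follows "using the same arguments used in the proofs of Lemma \ref{cod2c} and Theorem \ref{z2} and using Equation (\ref{MAIN2})," which is precisely your translation with tangent vectors (parallel transported under the Levi-Civita connection of $\Sigma$) in place of normal vectors, the same four-branch sign analysis, the same analytic-continuation contradiction via the functions $f$ and $g$, and the same cut-locus extension.
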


\subsection{DIMENSION $1$}
\label{CD2}
In this subsection, we will use the general formula proved in Subsection \ref{gfc} to prove a classification theorem for compact stable minimal immersions of dimension $1$ (geodesics) in the product of a complex projective space with any other Riemannian manifold. Moreover, as an application, we obtain some corollaries when the second manifold is a compact rank one space.

\begin{theorem}
\label{dim1c}
Let $\Phi=(\psi,\phi):\Sigma\rightarrow \mathbb{C}P^{\frac{m_{1}}{2}}\times M$ be a compact stable minimal immersion of dimension $n=1$, where $M$ is any Riemannian manifold of dimension $m_{2}$. Then, $\phi:\Sigma \rightarrow M$ is a  stable geodesic, $\psi$ is a constant function, and therefore $\Phi(\Sigma)=\{r\}\times \phi(\Sigma)$ with $r$ a point of $\mathbb{C}P^{\frac{m_{1}}{2}}$.
\end{theorem}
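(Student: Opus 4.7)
The plan is to exploit the general formula from Lemma \ref{GENERALC} in the $n=1$ case, which is the simplest possible instance since the sum over a single orthonormal tangent vector dramatically simplifies the right-hand side. Choose an orthonormal basis $\{e_1\}$ of $T_p\Sigma$ and apply Equation (\ref{MAIN2}). Because $J$ is an almost complex structure compatible with the Fubini--Study metric on $\mathbb{C}P^{m_1/2}$, it is skew-symmetric, so $\langle J(e_1^1), e_1^1\rangle = 0$. The formula therefore collapses to
\begin{equation*}
\sum_{A=1}^{m} -\langle N_{E_{A}},J_{\Sigma}(N_{E_{A}})\rangle = \lambda^{2}\bigl(\langle J(e_1^1), e_1^1\rangle^2 - \langle e_1^1, e_1^1\rangle^2\bigr) = -\lambda^{2}|e_1^1|^4 \leq 0.
\end{equation*}

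Next, I would integrate over $\Sigma$ and invoke stability: since $\Phi$ is a compact stable minimal immersion, each summand $-\int_\Sigma \langle N_{E_A},J_\Sigma(N_{E_A})\rangle\, d\Sigma$ is non-negative, so the total sum is non-negative. Combined with the pointwise non-positivity above, we obtain
\begin{equation*}
0 \le -\lambda^{2}\int_{\Sigma}|e_1^1|^4\, d\Sigma \le 0,
\end{equation*}
which forces $e_1^1 \equiv 0$ on $\Sigma$. Thus at every point $p\in\Sigma$ the image $d\Phi_p(T_p\Sigma)$ lies in $\{0\}\oplus T_{\phi(p)}M$, i.e. $d\psi_p \equiv 0$. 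Since $\Sigma$ is connected (we work on each component separately if needed), $\psi$ is constant, say $\psi\equiv r$, and consequently $\Phi(\Sigma)=\{r\}\times\phi(\Sigma)$.

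Finally, it remains to see that $\phi:\Sigma\to M$ is a stable geodesic. Because $\Phi$ is an isometric immersion with $\psi$ constant, $\phi$ is itself an isometric immersion of the $1$-dimensional $\Sigma$ into $M$, hence a curve parametrized by arc length. Minimality of $\Phi$ in the product metric, together with the splitting (\ref{SCM1M2}) of the ambient connection $\bar\nabla$, implies that $\nabla^2_{d\phi(e_1)}d\phi(e_1) = 0$, so $\phi$ is a geodesic. Stability of $\phi$ follows from stability of $\Phi$: any normal section $X$ of $\phi$ in $M$ lifts to the normal section $(0, X)$ of $\Phi$ in $\mathbb{C}P^{m_1/2}\times M$, and a direct computation using (\ref{SCM1M2}) shows that the Jacobi quadratic forms agree. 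This entire argument is essentially a routine application of the tools already developed; the only genuine step is the observation that the $n=1$ case of (\ref{MAIN2}) is automatically sign-definite because $J$ is skew-symmetric, and there is no real obstacle beyond that.
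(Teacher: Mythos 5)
Your proposal is correct and follows essentially the same route as the paper's proof: specialize Equation (\ref{MAIN2}) to $n=1$ to get the sign-definite quantity $-\lambda^{2}|e^{1}|^{4}$, integrate and use stability to force $e^{1}\equiv 0$, deduce that $\psi$ is constant, and then verify that $\phi$ is a stable geodesic via the splitting of the product connection and the inheritance of stability. The only cosmetic difference is that the paper establishes minimality of $\phi$ by explicitly computing that the mean curvature of $\Phi$ projects onto the mean curvature of $\phi$, whereas you assert the vanishing of $\nabla^{2}_{d\phi(e_{1})}d\phi(e_{1})$ directly; these are equivalent for a unit-speed curve.
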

\begin{proof}
Since $n=1$, Equation (\ref{MAIN2}) becomes

\begin{equation}
\label{G0}
\sum_{A=1}^{m} -\langle N_{E_{A}},J_{\Sigma}(N_{E_{A}})\rangle=-\lambda^{2} |e^{1}|^{4},
\end{equation}
where $e=(e^{1},e^{2})$ is an unitary vector in $d\Phi_{p}(T_{p}\Sigma)$, for $p\in\Sigma$. Therefore,
\begin{center}
$\displaystyle 0\leq  \sum_{A=1}^{m} -\int_{\Sigma}\langle N_{E_{A}},J_{\Sigma}(N_{E_{A}})\rangle d\Sigma=-\lambda^{2}\int_{\Sigma} |e^{1}|^{4}d\Sigma\leq 0,$
\end{center}
where we have used the fact that $\Sigma$ is stable in the first inequality. Hence, for $p\in\Sigma$, $e^{1}=0$. Therefore, 
\begin{equation}
\label{tsg}
    d\Phi_{p}(T_{p}\Sigma)=\{ \alpha (0,e^{2}):\alpha\in \mathbb{R}\}.
\end{equation}
Let $x\in T_{p}\Sigma$ arbitrary. Then, for some $\alpha\in \mathbb{R}$,

\begin{equation*}
    d\Phi_{p}(x)=(d\psi_{p}(x),d\phi_{p}(x))=(0,\alpha e^{2}).
\end{equation*}
Therefore, $d\psi_{p}(x)=0$, which implies that $\psi$ is constant. Now we will prove that $\phi:\Sigma \rightarrow M$ is a stable minimal immersion of dimension $1$. Since $\Phi$ is an isometric immersion and $\psi$ is constant, $\phi$ is an isometric immersion. From Equation (\ref{tsg}), we have that at $p\in \Sigma$,
\begin{center}
    $N_{p}\Sigma = \{ (v,0): v\in T_{\psi(p)}\mathbb{C}P^{\frac{m_{1}}{2}}\}\bigoplus \{(0,w): w\in [e^{2}]^{\perp_{M}}\}$.
\end{center}
Let $H$ be the mean curvature vector of $\Phi$, $E=(0,E^{2})$ a local unitary vector field tangent to $\Sigma$ around $p$ with $E(p)=(0,e^{2})$, and take the orthonormal basis of $N_{p}\Sigma$ given by 
\begin{center}
    $\{(v_{i},0):i=1,\ldots,m_{1}\}\bigcup \{(0,w_{j}): j=1,\ldots, m_{2}-1\}$,
\end{center}
 where $v_{i}\in T_{\psi(p)}\mathbb{C}P^{\frac{m_{1}}{2}}$ and $w_{j}\in [e^{2}]^{\perp_{M}}$. We will omit the evaluation at $p$ in the following computation, 

\begin{equation*}
\begin{aligned}
    \displaystyle 0=H &=(\bar{\nabla}_{E}E)^{N\Sigma}=(0,\nabla^{2}_{E^{2}}E^{2})^{N\Sigma}\\
    \displaystyle &=\sum_{i=1}^{m_{1}}\langle (0,\nabla^{2}_{E^{2}}E^{2}),(v_{i},0)\rangle(v_{i},0)+\sum_{j=1}^{m_{2}-1}\langle(0,\nabla^{2}_{E^{2}}E^{2}),(0,w_{j})\rangle(0,w_{j})\\
    \displaystyle &=\sum_{j=1}^{m_{2}-1}\langle(0,\nabla^{2}_{E^{2}}E^{2}),(0,w_{j})\rangle(0,w_{j})=
    (0,\sum_{j=1}^{m_{2}-1}\langle\nabla^{2}_{E^{2}}E^{2},w_{j}\rangle w_{j}).
    \end{aligned}
\end{equation*}
Therefore, 
\begin{center}
    $\displaystyle A:=\sum_{j=1}^{m_{2}-1}\langle\nabla^{2}_{E^{2}}E^{2},w_{j} \rangle w_{j}=0$.
\end{center}
But notice that $A$ is the mean curvature vector of $\phi$ as an immersion in $M$, and thus $\phi$ is minimal. Moreover, since $\Phi$ is stable then $\phi$ is stable (see preliminaries in Torralbo and Urbano \cite{torralbo2014stable}).

\end{proof}

\color{black}
\begin{corollary}
There are no compact stable geodesics in the product space $\mathbb{C}P^{\frac{m_{1}}{2}}\times S^{s}$, $\mathbb{C}P^{\frac{m_{1}}{2}}\times \mathbb{O} P^{2}$, or  $\mathbb{C}P^{\frac{m_{1}}{2}}\times \mathbb{K}P^{s}$, where $\mathbb{K}\in \{\mathbb{C}, \mathbb{H}\}$ other than $\{r\}\times S^{1}$ in $\mathbb{C}P^{\frac{m_{1}}{2}}\times S^{1}$, where $r\in \mathbb{C}P^{\frac{m_{1}}{2}}$. 
\end{corollary}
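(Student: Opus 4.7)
The plan is to apply Theorem \ref{dim1c} directly, reducing the question to one about compact stable geodesics in the second factor. By that theorem, any compact stable minimal immersion $\Phi=(\psi,\phi):\Sigma \to \mathbb{C}P^{m_{1}/2}\times M$ of dimension $n=1$ has the form $\Phi(\Sigma) = \{r\}\times \phi(\Sigma)$ with $\psi$ constant and $\phi:\Sigma\to M$ a stable geodesic. Hence the corollary is equivalent to asserting the nonexistence of compact stable geodesics in each of the candidate target manifolds listed in the statement, modulo the exceptional slice $\{r\}\times S^{1}$.

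For $M = S^{s}$ with $s\geq 2$, Simons' theorem (Theorem \ref{t1.1}) prohibits any compact stable minimal immersion whenever the codimension $d = s-1$ is positive, so no stable geodesic exists. For $s=1$, the ambient $S^{1}$ is itself one-dimensional and the only possible $\phi(\Sigma)$ is (a covering of) $S^{1}$, which accounts for the exceptional family $\{r\}\times S^{1}$ in the statement. For $M = \mathbb{C}P^{s}$, Theorem \ref{ohnita} forces any CSMI to be a complex submanifold and hence to have even real dimension, ruling out the case $n=1$. For $M = \mathbb{H}P^{s}$, the same theorem forces the dimension of a CSMI to be a multiple of $4$, and for $M = \mathbb{O}P^{2}$ the only CSMI has dimension $8$; in neither case can $n=1$ occur.

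There is essentially no hard obstacle here: the corollary is an assembly of Theorem \ref{dim1c} with the classifications of stable minimal submanifolds in the relevant compact rank one symmetric spaces recalled in Theorems \ref{t1.1} and \ref{ohnita}. The only subtlety worth explicit mention is the boundary case $s=1$ for $S^{s}$, where $S^{1}$ itself, appearing as the slice $\{r\}\times S^{1}$, is indeed a compact stable geodesic and must be recorded as the exception to the nonexistence statement.
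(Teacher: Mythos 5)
Your proposal is correct and follows the same route as the paper: apply Theorem \ref{dim1c} to reduce to stable geodesics in the second factor, then invoke Theorems \ref{t1.1} and \ref{ohnita} to exclude all cases except $S^{1}$ in $S^{1}$. The extra detail you supply on parity/divisibility of dimensions in the projective-space cases is a harmless elaboration of what the paper leaves implicit.
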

\begin{proof}
We apply Theorem \ref{dim1c} and notice that there are no stable geodesics in $S^{s}$, $\mathbb{C}P^{s}$, 
$\mathbb{H}P^{s}$, or $\mathbb{O} P^{2}$ (see Theorems \ref{t1.1} and \ref{ohnita}) other than $S^{1}$ in $S^{1}$.
\end{proof}

\begin{corollary}
The only compact stable geodesic in the product space $\mathbb{C}P^{\frac{m_{1}}{2}}\times \mathbb{R}P^{s}$ is $\{r\}\times \mathbb{R}P^{1}$, $r\in \mathbb{C}P^{\frac{m_{1}}{2}}$.
\end{corollary}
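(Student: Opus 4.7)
The plan is to combine Theorem \ref{dim1c} with the classification of stable minimal submanifolds in real projective space (Theorem \ref{ohnita0}). By Theorem \ref{dim1c}, any compact stable minimal immersion $\Phi=(\psi,\phi):\Sigma\rightarrow \mathbb{C}P^{\frac{m_{1}}{2}}\times \mathbb{R}P^{s}$ of dimension $n=1$ must satisfy $\psi\equiv r$ for some $r\in \mathbb{C}P^{\frac{m_{1}}{2}}$, and $\phi:\Sigma\rightarrow \mathbb{R}P^{s}$ is a stable geodesic (i.e., a compact stable minimal immersion of dimension $1$). Consequently $\Phi(\Sigma)=\{r\}\times \phi(\Sigma)$, so the problem reduces to identifying the compact stable geodesics in $\mathbb{R}P^{s}$.

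For this second step I would invoke Theorem \ref{ohnita0}, applied with $n=1$ and $d=s-1$: the only compact stable minimal $1$-dimensional submanifold of $\mathbb{R}P^{s}$ is a real projective subspace $\mathbb{R}P^{1}$. Therefore $\phi(\Sigma)=\mathbb{R}P^{1}$, which gives $\Phi(\Sigma)=\{r\}\times \mathbb{R}P^{1}$, proving the corollary.

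Since both ingredients are already established in the paper, there is no genuine obstacle: this corollary is a direct application and its proof amounts to citing the two theorems in sequence. The only thing to be careful about is verifying the hypotheses of Theorem \ref{ohnita0} apply (it is stated for arbitrary dimension $n$ with codimension $d$ in $\mathbb{R}P^{n+d}$, so taking $n=1$ and $n+d=s$ is immediate).
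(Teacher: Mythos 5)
Your proposal is correct and follows exactly the paper's own argument: apply Theorem \ref{dim1c} to reduce to classifying compact stable geodesics in $\mathbb{R}P^{s}$, then cite Theorem \ref{ohnita0} with $n=1$ to conclude the image is $\{r\}\times\mathbb{R}P^{1}$. No gaps.
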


\begin{proof}
We apply Theorem \ref{dim1c} and notice that the only stable geodesic in $\mathbb{R}P^{s}$ is
$\mathbb{R}P^{1}$ (see Theorem \ref{ohnita0}).
\end{proof}

\section{MINIMAL STABLE SUBMANIFOLDS IN $\mathbb{H}P^{\frac{m_{1}}{4}}\times M$}

The strategies that will be used in this section are similar to those found in Section \ref{complexcompu}. Some details are presented again for the sake of completeness. 

\begin{subsection}{GENERAL FORMULA}
\label{quaternioniccompu}
Let $\Phi=(\psi,\phi):\Sigma\rightarrow \bar{M}:= \mathbb{H}P^{\frac{m_{1}}{4}}\times M$ be a  compact minimal immersion of codimension $d$ and dimension $n$, where $M$ is any Riemannian manifold of dimension $m_{2}$ and $\Phi_{1}:\mathbb{H}P^{\frac{m_{1}}{4}}\rightarrow \mathbb{R}^{m}$ is the immersion described in Section \ref{section1}. For each $v\in \mathbb{R}^{m}$ let us consider the following:
\begin{center}
    $\nu:=(v,0)\in T(\mathbb{R}^{m}\times M)$\\
    $N_{v}:=[\nu]^{N}$,
\end{center}
where $[.]^{N}$ is  projection in the orthogonal complement, $N_{p}\Sigma$, of $T_{p}\Sigma$ in $T_{\Phi(p)}\bar{M}$, $p\in \Sigma$.

\begin{lemma}
\label{GENERALH}
Let $p\in \Sigma$, $\{e_{1},\ldots,e_{n}\}$
be an orthonormal basis of  $T_{p}\Sigma$, $\{\eta_{1},\ldots,\eta_{d}\}$ be an orthonormal basis of  $N_{p}\Sigma$, and $\{E_{1},\ldots,E_{m}\}$ be the usual canonical basis of $\mathbb{R}^{m}$. Then, for $s\in \{1,2,3\}$
\begin{center}
    $\displaystyle\sum_{A=1}^{m} -\langle N_{E_{A}},J_{\Sigma}(N_{E_{A}})\rangle$
\end{center}
\begin{equation}
\label{hMAIN}=\lambda^{2}(
-\sum_{\substack{k=1\\
         k\neq s}}^{3}\sum_{j=1}^{n}\sum_{\beta=1}^{d} \langle J_{k}(\eta^{1}_{\beta}),e^{1}_{j} \rangle^{2}\displaystyle +\sum_{\beta=1}^{d}\sum_{l=1}^{d} \langle J_{s}(\eta^{1}_{\beta}),\eta^{1}_{l} \rangle^{2}-\langle \eta^{1}_{\beta},\eta^{1}_{l} \rangle^{2})
\end{equation}

\begin{equation}
    \label{hMAIN2}=\lambda^{2}(-\sum_{\substack{k=1\\
         k\neq s}}^{3}\sum_{j=1}^{n}\sum_{\beta=1}^{d} \langle J_{k}(\eta^{1}_{\beta}),e^{1}_{j}\rangle^{2}
+\sum_{i=1}^{n}\sum_{j=1}^{n}\langle J_{s}(e^{1}_{i}),e_{j}^{1}\rangle^{2}-\langle e^{1}_{i},e_{j}^{1}\rangle^{2}).
\end{equation}
\end{lemma}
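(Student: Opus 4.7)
The plan is to mirror the derivation of Lemma~\ref{GENERALC}, replacing the complex curvature identity (\ref{fubini}) by its quaternionic counterpart (\ref{hfubini}). First I would invoke the Chen--Wang identity (Equation~(2.8) in \cite{chen2013stable}), which only uses that the first factor of $\bar{M}$ is isometrically immersed in $\mathbb{R}^{m}$ and is therefore equally valid in our quaternionic setting:
$$\sum_{A=1}^{m}-\langle N_{E_{A}}, J_{\Sigma}(N_{E_{A}})\rangle = \sum_{j=1}^{n}\sum_{\beta=1}^{d}\bigl(2|B(e_{j}^{1},\eta_{\beta}^{1})|^{2} - \langle B(\eta_{\beta}^{1},\eta_{\beta}^{1}), B(e_{j}^{1},e_{j}^{1})\rangle\bigr).$$
Since the identity (\ref{CCP}) for the Veronese embedding holds for both $\mathbb{K}=\mathbb{C}$ and $\mathbb{K}=\mathbb{H}$, I would then substitute it exactly as in the derivation of (\ref{baseequation}) to convert the right-hand side into
$$\sum_{j,\beta}\Bigl(-\tfrac{4}{3}\langle R(e_{j}^{1},\eta_{\beta}^{1})\eta_{\beta}^{1}, e_{j}^{1}\rangle + \tfrac{2\lambda^{2}}{3}\langle e_{j}^{1},\eta_{\beta}^{1}\rangle^{2} + \tfrac{\lambda^{2}}{3}|e_{j}^{1}|^{2}|\eta_{\beta}^{1}|^{2}\Bigr).$$

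Next I would plug in the quaternionic curvature formula (\ref{hfubini}) specialized to $X=W=e_{j}^{1}$ and $Y=Z=\eta_{\beta}^{1}$. Using that each $J_{k}$ is a skew-symmetric isometry (so $\langle J_{k}X, Y\rangle = -\langle X, J_{k}Y\rangle$), the three $J_{k}$-dependent pieces of (\ref{hfubini}) collapse to
$$\langle R(e_{j}^{1},\eta_{\beta}^{1})\eta_{\beta}^{1}, e_{j}^{1}\rangle = \tfrac{\lambda^{2}}{4}\Bigl(|\eta_{\beta}^{1}|^{2}|e_{j}^{1}|^{2} - \langle e_{j}^{1},\eta_{\beta}^{1}\rangle^{2} + 3\sum_{k=1}^{3}\langle J_{k}\eta_{\beta}^{1}, e_{j}^{1}\rangle^{2}\Bigr),$$
the coefficient $3$ arising as $2+1$ from the two $J_{k}$-sums of (\ref{hfubini}) after antisymmetric simplification. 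Substituting and collecting terms yields the quaternionic analogue of (\ref{L0}):
$$\sum_{A=1}^{m}-\langle N_{E_{A}}, J_{\Sigma}(N_{E_{A}})\rangle = \lambda^{2}\sum_{j=1}^{n}\sum_{\beta=1}^{d}\Bigl(\langle e_{j}^{1},\eta_{\beta}^{1}\rangle^{2} - \sum_{k=1}^{3}\langle J_{k}\eta_{\beta}^{1}, e_{j}^{1}\rangle^{2}\Bigr).$$

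The last step is to single out a distinguished index $s\in\{1,2,3\}$, split the inner $k$-sum into the $k=s$ and $k\neq s$ parts, and apply the Parseval-type trick from the complex case only to the $k=s$ piece. Concretely, since $|J_{s}\eta_{\beta}^{1}|=|\eta_{\beta}^{1}|$, expanding $(J_{s}\eta_{\beta}^{1},0)$ and $(\eta_{\beta}^{1},0)$ in the orthonormal basis $\{e_{i}\}\cup\{\eta_{l}\}$ of $T_{\Phi(p)}\bar{M}$ gives
$$\sum_{j}\langle J_{s}\eta_{\beta}^{1}, e_{j}^{1}\rangle^{2} = |\eta_{\beta}^{1}|^{2} - \sum_{l}\langle J_{s}\eta_{\beta}^{1},\eta_{l}^{1}\rangle^{2}, \qquad \sum_{j}\langle\eta_{\beta}^{1}, e_{j}^{1}\rangle^{2} = |\eta_{\beta}^{1}|^{2} - \sum_{l}\langle\eta_{\beta}^{1},\eta_{l}^{1}\rangle^{2};$$
substitution into the $k=s$ piece (leaving the $k\neq s$ terms untouched) yields Equation~(\ref{hMAIN}). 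Expanding instead $(J_{s}e_{j}^{1},0)$ and $(e_{j}^{1},0)$ in the same basis and summing over $\beta$ rather than over $j$ yields Equation~(\ref{hMAIN2}). The derivation is essentially bookkeeping parallel to the complex case; the only genuine novelty is the presence of three almost complex structures $J_{k}$ and the decision to preserve the two $k\neq s$ terms in raw form, which is exactly what produces the parametric shape of the statement. I do not anticipate any substantive obstacle beyond keeping the antisymmetry relations for the $J_{k}$ straight.
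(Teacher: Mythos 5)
Your proposal is correct and follows essentially the same route as the paper: the paper likewise starts from Chen--Wang's identity, substitutes (\ref{CCP}) to arrive at the quaternionic analogue of (\ref{baseequation}), plugs in (\ref{hfubini}) (your computation of $\langle R(e_{j}^{1},\eta_{\beta}^{1})\eta_{\beta}^{1},e_{j}^{1}\rangle$, including the coefficient $3=2+1$ and the vanishing of the $\langle J_{k}(Y),Z\rangle\langle J_{k}(X),W\rangle$ term by skew-symmetry, matches), splits the resulting $k$-sum into the $k=s$ and $k\neq s$ parts, and applies the Parseval expansions only to the $k=s$ piece to obtain (\ref{hMAIN}) and (\ref{hMAIN2}). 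No gaps.
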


\begin{proof}
Recall that $R$ is the curvature tensor of $\mathbb{H}P^{\frac{m_{1}}{4}}$ and $B$ is the second fundamental form of $\mathbb{H}P^{\frac{m_{1}}{4}}$ in $\mathbb{R}^{m}$ (see Subsection 1.2).
Using Equation (2.8) in \cite{chen2013stable} and proceeding as in the beginning of the proof of Lemma \ref{GENERALC} (changing the index $k$ for $\beta$)  we have the following equation (same equation than Equation (\ref{baseequation}))

\begin{equation}
\label{hbaseequation}
    \displaystyle\displaystyle\sum_{A=1}^{m} -\langle N_{E_{A}},J_{\Sigma}(N_{E_{A}})\rangle=\sum_{j=1}^{n}\sum_{\beta=1}^{d}-\frac{4}{3} \langle R(e^{1}_{j},\eta^{1}_{\beta})\eta^{1}_{\beta},e^{1}_{j} \rangle +\frac{2\lambda^{2}}{3} \langle e^{1}_{j},\eta^{1}_{\beta}\rangle^{2}+\frac{\lambda^{2}}{3}|e^{1}_{j}|^{2}|\eta^{1}_{\beta}|^{2}.
\end{equation}
Using Equation (\ref{hfubini}) in Equation (\ref{hbaseequation}),

\begin{equation}
\begin{aligned}
\label{hL0}
\sum_{A=1}^{m} -\langle N_{E_{A}},J_{\Sigma}(N_{E_{A}})\rangle &=\lambda^{2} \sum_{j=1}^{n}\sum_{\beta=1}^{d} \Big (\langle e^{1}_{j},\eta^{1}_{\beta}\rangle^{2}-\sum_{k=1}^{3}\langle e^{1}_{j},J_{k}(\eta^{1}_{\beta})\rangle^{2} \Big )\\
\displaystyle &=\lambda^{2} \Big (\sum_{j=1}^{n}\sum_{\beta=1}^{d}\langle e^{1}_{j},\eta^{1}_{\beta}\rangle^{2}-\sum_{j=1}^{n}\sum_{\beta=1}^{d}\sum_{k=1}^{3}\langle e^{1}_{j},J_{k}(\eta^{1}_{\beta})\rangle^{2} \Big )\\
\displaystyle &=\lambda^{2} \Big (\sum_{j=1}^{n}\sum_{\beta=1}^{d}\langle e^{1}_{j},\eta^{1}_{\beta}\rangle^{2} 
-\sum_{j=1}^{n}\sum_{\beta=1}^{d}\langle e^{1}_{j},J_{s}(\eta^{1}_{\beta})\rangle^{2}-\sum_{j=1}^{n}\sum_{\beta=1}^{d}\sum_{\substack{k=1\\
         k\neq s}}^{3}\langle e^{1}_{j},J_{k}(\eta^{1}_{\beta})\rangle^{2} \Big ).
\end{aligned}
\end{equation}

Ignoring the last term in the last equality and replacing $\beta$ with $k$ and $J_{s}$ with  $J$, we now proceed in the same way as in the proof of Lemma \ref{GENERALC} (from Equation (\ref{L0}) on) to study the first two terms in the last equality above. For $\beta\in\{1,\ldots,d\}$

\begin{equation*}
\begin{aligned}
\displaystyle |\eta^{1}_{\beta}|^{2}&=|J_{s}(\eta^{1}_{\beta})|^{2}=|(J_{s}(\eta^{1}_{\beta}),0)|^{2}\\
\displaystyle &=\sum_{j=1}^{n}\langle (J_{s}(\eta^{1}_{\beta}),0),e_{j}\rangle^{2}+\sum_{l=1}^{d}\langle (J_{s}(\eta^{1}_{\beta}),0),\eta_{l}\rangle^{2}\\
\displaystyle &=\sum_{j=1}^{n}\langle J_{s}(\eta^{1}_{\beta}),e_{j}^{1}\rangle^{2}+\sum_{l=1}^{d}\langle J_{s}(\eta^{1}_{\beta}),\eta^{1}_{l}\rangle^{2}.
\end{aligned}
\end{equation*}
Then,

\begin{center}
    $\displaystyle -\sum_{j=1}^{n}\langle J_{s}(\eta_{\beta}^{1}),e^{1}_{j} \rangle^{2}=-|\eta_{\beta}^{1}|^{2}+\sum_{l=1}^{d}\langle J_{s}(\eta_{\beta}^{1}) , \eta_{l}^{1} \rangle^{2}$,
\end{center}
and summing in $\beta$,

\begin{equation}
\label{hL1}-\sum_{\beta=1}^{d}\sum_{j=1}^{n}\langle J_{s}(\eta^{1}_{\beta}),e_{j}^{1}\rangle^{2}= -\sum_{\beta=1}^{d}|\eta^{1}_{\beta}|^{2}+\sum_{\beta=1}^{d}\sum_{l=1}^{d}\langle J_{s}(\eta^{1}_{\beta}),\eta^{1}_{l}\rangle^{2}.\\
\end{equation}
On the other hand, again for $\beta\in\{1,\ldots, d\}$

\begin{equation*}
\begin{aligned}
\displaystyle |\eta^{1}_{\beta}|^{2}&=|(\eta^{1}_{\beta},0)|^{2}\\
\displaystyle &=\sum_{j=1}^{n}\langle (\eta^{1}_{\beta},0),e_{j}\rangle^{2}+\sum_{l=1}^{d}\langle (\eta^{1}_{\beta},0),\eta_{l}\rangle^{2}\\
\displaystyle &=\sum_{j=1}^{n}\langle\eta^{1}_{\beta},e_{j}^{1}\rangle^{2}+\sum_{l=1}^{d}\langle\eta^{1}_{\beta},\eta^{1}_{l}\rangle^{2}.
\end{aligned}
\end{equation*}
Therefore,
\begin{center}
$\displaystyle \sum_{j=1}^{n}\langle \eta_{\beta}^{1},e_{j}^{1}\rangle^{2}=|\eta_{\beta}^{1}|^{2}-\sum_{l=1}^{d} \langle \eta_{\beta}^{1}, \eta_{l}^{1}\rangle^{2}$,
\end{center}
and summing in $\beta$,

\begin{equation}
\label{hL2}    
\displaystyle \sum_{\beta=1}^{d}\sum_{j=1}^{n}\langle\eta^{1}_{\beta},e_{j}^{1}\rangle^{2}=\sum_{\beta=1}^{d}|\eta^{1}_{\beta}|^{2}-\sum_{\beta=1}^{d}\sum_{l=1}^{d}\langle\eta^{1}_{\beta},\eta^{1}_{l}\rangle^{2}.
\end{equation}
Then, in order to prove Equation (\ref{hMAIN}), we replace Equations (\ref{hL1}) and (\ref{hL2}) in (\ref{hL0}), given us that

\begin{equation*}
\begin{aligned}
\displaystyle &\sum_{A=1}^{m}-\langle N_{E_{A}},J_{\Sigma}(N_{E_{A}})\rangle\\
\displaystyle&=\lambda^{2}\Big (\sum_{\beta=1}^{d}|\eta^{1}_{\beta}|^{2}-\sum_{\beta=1}^{d}\sum_{l=1}^{d}\langle\eta^{1}_{\beta},\eta^{1}_{l}\rangle^{2}
-\sum_{\beta=1}^{d}|\eta^{1}_{\beta}|^{2}+\sum_{\beta=1}^{d}\sum_{l=1}^{d}\langle J_{s}(\eta^{1}_{\beta}),\eta^{1}_{l}\rangle^{2}-\sum_{j=1}^{n}\sum_{\beta=1}^{d}\sum_{\substack{k=1\\
         k\neq s}}^{3}\langle e^{1}_{j},J_{k}(\eta^{1}_{\beta})\rangle^{2}\Big )
\\
\displaystyle&=\lambda^{2}\Big (-\sum_{\beta=1}^{d}\sum_{l=1}^{d}\langle\eta^{1}_{\beta},\eta^{1}_{l}\rangle^{2}+\sum_{\beta=1}^{d}\sum_{l=1}^{d}\langle J_{s}(\eta^{1}_{\beta}),\eta^{1}_{l}\rangle^{2}-\sum_{j=1}^{n}\sum_{\beta=1}^{d}\sum_{\substack{k=1\\
         k\neq s}}^{3}\langle e^{1}_{j},J_{k}(\eta^{1}_{\beta})\rangle^{2}\Big ).\\
\end{aligned}
\end{equation*}

Now, let us prove Equation (\ref{hMAIN2}). For $j\in\{1,\ldots,n\}$,

\begin{equation*}
\begin{aligned}
\displaystyle |e^{1}_{j}|^{2}&=|J_{s}(e^{1}_{j})|^{2}=|(J_{s}(e^{1}_{j}),0)|^{2}\\
\displaystyle &=\sum_{i=1}^{n}\langle (J_{s}(e^{1}_{j}),0),e_{i}\rangle^{2}+\sum_{\beta=1}^{d}\langle(J_{s}(e^{1}_{j}),0),\eta_{\beta}\rangle^{2}\\
\displaystyle &=\sum_{i=1}^{n}\langle J_{s}(e^{1}_{j}),e_{i}^{1}\rangle^{2}+\sum_{\beta=1}^{d}\langle J_{s}(e^{1}_{j}),\eta^{1}_{\beta}\rangle^{2}.
\end{aligned}
\end{equation*}
Then,
\begin{center}
    $\displaystyle -\sum_{\beta=1}^{d}\langle J_{s}(e^{1}_{j}), \eta_{\beta}^{1}\rangle^{2}=-|e_{j}^{1}|^{2}+\sum_{i=1}^{n}\langle J_{s}(e_{j}^{1}),e_{i}^{1}\rangle^{2}$,
\end{center}
and summing in $j$,

\begin{equation}
\label{hL3}
-\sum_{j=1}^{n}\sum_{\beta=1}^{d}\langle J_{s}(e^{1}_{j}),\eta^{1}_{\beta}\rangle^{2}
=-\sum_{j=1}^{n}|e^{1}_{j}|^{2}+\sum_{j=1}^{n}\sum_{i=1}^{n}\langle J_{s}(e^{1}_{j}),e_{i}^{1}\rangle^{2}.\\
\end{equation}
On the other hand, again for $j\in\{1,\ldots,n\}$

\begin{equation*}
\begin{aligned}
\displaystyle |e^{1}_{j}|^{2}&=|(e^{1}_{j},0)|^{2}\\
\displaystyle &=\sum_{i=1}^{n}\langle (e^{1}_{j},0),e_{i}\rangle^{2}+\sum_{\beta=1}^{d}\langle(e^{1}_{j},0),\eta_{\beta}\rangle^{2}\\
\displaystyle &=\sum_{i=1}^{n}\langle e^{1}_{j},e_{i}^{1}\rangle^{2}+\sum_{\beta=1}^{d}\langle e^{1}_{j},\eta^{1}_{\beta}\rangle^{2}.
\end{aligned}
\end{equation*}
Therefore,
\begin{center}
$\displaystyle\sum_{\beta=1}^{d}\langle e_{j}^{1}, \eta_{\beta}^{1} \rangle^{2}=|e_{j}^{1}|^{2}-\sum_{i=1}^{n}\langle e_{j}^{1},e_{i}^{1} \rangle^{2}$,    
\end{center}
summing in $j$,

\begin{equation}
\label{hL4} \sum_{j=1}^{n}\sum_{\beta=1}^{d}\langle e^{1}_{j},\eta^{1}_{\beta}\rangle^{2}
 =\sum_{j=1}^{n}|e^{1}_{j}|^{2}-\sum_{i=1}^{n}\sum_{j=1}^{n} \langle e^{1}_{i},e_{j}^{1}\rangle^{2}.
\end{equation}
Replacing Equations (\ref{hL3}) and (\ref{hL4}) in (\ref{hL0}),
\begin{equation*}
    \begin{aligned}
\displaystyle &\sum_{A=1}^{m} -\langle N_{E_{A}},J_{\Sigma}(N_{E_{A}})\rangle\\
\displaystyle & =\lambda^{2}(\sum_{j=1}^{n}|e^{1}_{j}|^{2}-\sum_{i=1}^{n}\sum_{j=1}^{n}\langle e^{1}_{i},e_{j}^{1}\rangle^{2}
-\sum_{j=1}^{n}|e^{1}_{j}|^{2}+\sum_{j=1}^{n}\sum_{i=1}^{n}\langle J_{s}(e^{1}_{j}),e_{i}^{1}\rangle^{2}-\sum_{j=1}^{n}\sum_{\beta=1}^{d}\sum_{\substack{k=1\\
         k\neq s}}^{3}\langle e^{1}_{j},J_{k}(\eta^{1}_{\beta})\rangle^{2})\\
\displaystyle\hspace{-0.1cm}&=\lambda^{2}(-\sum_{i=1}^{n}\sum_{j=1}^{n}\langle e^{1}_{i},e_{j}^{1}\rangle^{2}+\sum_{j=1}^{n}\sum_{i=1}^{n}\langle J_{s}(e^{1}_{j}),e_{i}^{1}\rangle^{2}-\sum_{j=1}^{n}\sum_{\beta=1}^{d}\sum_{\substack{k=1\\
         k\neq s}}^{3}\langle e^{1}_{j},J_{k}(\eta^{1}_{\beta})\rangle^{2}
).
\end{aligned}
\end{equation*}
\end{proof}
\end{subsection}

\subsection{CODIMENSION $1$ AND $2$}
\label{QC12}
The arguments that will be used in the proof of Theorem \ref{cod1h} are analogous to the arguments used in the proof of Theorem  \ref{cod1c}. The arguments that will be presented in the first part of the proof of Theorem \ref{hz1} are similar to the arguments presented in the proof of Lemma \ref{cod2c}. Then, after obtaining a characterization of the tangent space of the immersion, the second part of the proof follows similarly the proof of Theorem \ref{cod1c}.

\begin{theorem}
\label{cod1h}
Let $\Phi=(\psi,\phi):\Sigma \rightarrow \bar{M}:= \mathbb{H}P^{\frac{m_{1}}{4}}\times M$ be a compact stable minimal immersion of codimension $d=1$ and dimension $n$, where $M$ is any Riemannian manifold of dimension $m_{2}$. Then, $\Sigma=\mathbb{H}P^{\frac{m_{1}}{4}}\times \hat{\Sigma}$, $\Phi= Id \times \hat{\phi}$ where $\hat{\phi}:\hat{\Sigma}\rightarrow M$ is a compact stable minimal  immersion of codimension $1$, and therefore $\Phi(\Sigma)=\mathbb{H}P^{\frac{m_{1}}{4}}\times \hat{\phi}(\hat{\Sigma})$. In particular, for $m_{2}=1$, $\Sigma=\mathbb{H}P^{\frac{m_{1}}{4}}$, $\hat{\phi}$ is a constant function, and $\Phi(\Sigma)=\mathbb{H}P^{\frac{m_{1}}{4}}\times \{q\}$, for $q\in M$.
\end{theorem}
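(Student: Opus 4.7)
The plan is to mirror the proof of Theorem \ref{cod1c} for the complex case, using the general formula in Lemma \ref{GENERALH} instead of Lemma \ref{GENERALC}. First I would specialize Equation (\ref{hMAIN}) to $d=1$: choosing any $s\in\{1,2,3\}$ and letting $\eta$ be a unit vector spanning $N_p\Sigma$, the sum over $\beta,l\in\{1\}$ yields $\langle J_s(\eta^1),\eta^1\rangle^2 - |\eta^1|^4$, and since $J_s$ is skew-symmetric the first term vanishes. Thus
\begin{equation*}
\sum_{A=1}^{m} -\langle N_{E_A}, J_\Sigma(N_{E_A})\rangle
= \lambda^2\Bigl(-\sum_{\substack{k=1\\ k\neq s}}^{3}\sum_{j=1}^{n} \langle J_k(\eta^1),e_j^1\rangle^2 - |\eta^1|^4\Bigr) \leq 0.
\end{equation*}

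Stability of $\Sigma$ forces the integral of the left-hand side to be $\geq 0$, so the pointwise-nonpositive integrand must vanish identically. In particular $|\eta^1|^4 \equiv 0$, i.e.\ $\eta^1(p)=0$ at every $p\in\Sigma$, so the unit normal lives entirely in the second factor: $\eta=(0,\eta^2)$. From this point the argument is formally identical to the complex case. I would introduce the complementary smooth distributions
\begin{equation*}
D_1(p)=\ker(d\phi_p), \qquad D_2(p)=\ker(d\psi_p),
\end{equation*}
check that $\psi:\Sigma\to\mathbb{H}P^{m_1/4}$ is an onto Riemannian submersion (using completeness of $\Sigma$ together with Hermann's result quoted in Proposition \ref{hermann} to show $\psi(\Sigma)$ is closed and open, hence all of $\mathbb{H}P^{m_1/4}$), and verify via the Riemannian almost-product structure argument (employing the parallel involution $P(v_1,v_2)=(v_1,-v_2)$ on $\bar M$) that both $D_1$ and $D_2$ are totally geodesic.

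Since $\mathbb{H}P^{m_1/4}$ is simply connected, Lemma \ref{lemmafact} applied to $\psi$ then yields that $\psi$ is a trivial submersion, so $\Sigma=\mathbb{H}P^{m_1/4}\times \psi^{-1}(s)$ up to isometry for a fixed $s$; writing $\hat\Sigma:=\psi^{-1}(s)$ and $\hat\phi:=\phi(s,\cdot)$, an isometry computation on $d\Phi_p(x,0)=(x,d\phi_p(x,0))$ using $|d\Phi_p(x,0)|^2=|x|^2$ forces $d\phi_p(x,0)=0$, so $\phi$ is independent of the first factor and $\Phi=Id\times\hat\phi$. Finally, computing $H=0$ by splitting the trace of the second fundamental form over orthonormal frames tangent to $\mathbb{H}P^{m_1/4}$ and to $\hat\Sigma$ shows the mean curvature reduces to that of $\hat\phi$, so $\hat\phi$ is minimal; stability of $\hat\phi$ follows from stability of $\Phi$ by restricting admissible normal variations, exactly as in the last step of the proof of Theorem \ref{cod1c}.

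The main obstacle is really the first step, which is obtaining a sign for the codimension-$1$ specialization of (\ref{hMAIN}); the quaternionic formula carries the extra nonpositive ``cross'' term $-\sum_{k\neq s}\sum_j \langle J_k(\eta^1),e_j^1\rangle^2$ beyond the complex case, but fortunately this cross term has the same sign as $-|\eta^1|^4$, so stability still forces $\eta^1\equiv 0$ and the rest of the proof carries over verbatim from Subsection \ref{CC1}.
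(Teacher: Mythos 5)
Your proposal is correct and follows essentially the same route as the paper: specialize Equation (\ref{hMAIN}) to $d=1$, observe that the extra quaternionic cross term is nonpositive so stability still forces $\eta^{1}\equiv 0$, and then repeat the submersion/totally-geodesic-distribution argument of Theorem \ref{cod1c} verbatim, invoking Lemma \ref{lemmafact} via the simple connectedness of $\mathbb{H}P^{\frac{m_{1}}{4}}$. The paper's own proof does exactly this and explicitly defers the second half to the proof of Theorem \ref{cod1c}, so no further comparison is needed.
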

\begin{proof}
Since $d=1$, Equation (\ref{hMAIN}) becomes

\begin{equation*}
\begin{aligned}
\displaystyle\sum_{A=1}^{m} -\langle N_{E_{A}},J_{\Sigma}(N_{E_{A}})\rangle &=\lambda^{2}              \Big (
-\sum_{\substack{k=1\\
         k\neq s}}^{3}\sum_{j=1}^{n}\langle J_{k}(\eta^{1}),e^{1}_{j}\rangle^{2}\displaystyle +\langle J_{s}(\eta^{1}),\eta^{1}\rangle^{2}-\langle\eta^{1},\eta^{1}\rangle^{2}              \Big )
         \\
\displaystyle &=\lambda^{2} \Big (
-\sum_{\substack{k=1\\
         k\neq s}}^{3}\sum_{j=1}^{n}\langle J_{k}(\eta^{1}),e^{1}_{j}\rangle^{2}\displaystyle-|\eta^{1}|^{4}\Big ),
         \end{aligned}
\end{equation*}
where $\eta$ is an unitary vector at $N_{p}\Sigma$, for $p \in \Sigma$. The last expression differs from Equation (\ref{similar}) in the first term. This term does not add any new difficulties because it is non-positive. In fact, integrating both sides of the last equality, we have

\begin{center}
$\displaystyle 0\leq \sum_{A=1}^{m} -\int_{\Sigma}\langle N_{E_{A}},J_{\Sigma}(N_{E_{A}})\rangle d\Sigma=
-\lambda^{2}\int_{\Sigma}\sum_{\substack{k=1\\
         k\neq s}}^{3}\sum_{j=1}^{n}\langle J_{k}(\eta^{1}),e^{1}_{j}\rangle^{2}\displaystyle+|\eta^{1}|^{4}d\Sigma\leq 0,$
\end{center}
where we have used the fact that $\Sigma$ is stable in the first inequality. Hence, for $p\in \Sigma$, $\eta^{1}=0$, and therefore $\eta=(0,\eta^{2})$. Thus,

\begin{center}
    $d\Phi_{p}(T_{p}\Sigma)= \bar{D_{1}}(p) \bigoplus \bar{D_{2}}(p)$,
\end{center}
where $\bar{D_{1}}$ and $\bar{D_{2}}$ are given by

\begin{center}
    $\bar{D_{1}}(p)=\{(x,0): x\in T_{\psi(p)}\mathbb{H}P^{\frac{m_{1}}{2}}\} $\\
    
    $\bar{D_{2}}(p)= \{(0,w): w\in [\eta^{2}]^{\perp_{M}}\}$,
\end{center}
where $[z]^{\perp_{M}}$ is the orthogonal complement of $z$ in $T_{\phi(p)}M$. Since $\mathbb{H}P^{\frac{m_{1}}{4}}$ is simply connected, the rest of the proof follows by applying the same proof of Theorem \ref{cod1c}. 

\end{proof}

\begin{corollary}
There are no compact stable minimal hypersurfaces in the product manifold $\mathbb{H}P^{\frac{m_{1}}{4}}\times S^{s}$, $\mathbb{H}P^{\frac{m_{1}}{4}}\times \mathbb{O} P^{2}$, or $\mathbb{H}P^{\frac{m_{1}}{4}}\times \mathbb{K}P^{s}$, where $\mathbb{K}\in \{\mathbb{C}, \mathbb{H}\}$ other than  $\mathbb{H}P^{\frac{m_{1}}{4}}\times \{q\}$ in $\mathbb{H}P^{\frac{m_{1}}{4}}\times S^{1}$, where $q\in S^{1}$. 
\end{corollary}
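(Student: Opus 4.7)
The plan is to apply Theorem \ref{cod1h} to reduce the problem to the classification of compact stable minimal hypersurfaces in the second factor, and then invoke the known classification results in the compact rank one symmetric spaces to exclude all nontrivial possibilities.

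First, I would apply Theorem \ref{cod1h} with $M$ chosen to be $S^{s}$, $\mathbb{O}P^{2}$, or $\mathbb{K}P^{s}$ for $\mathbb{K}\in\{\mathbb{C},\mathbb{H}\}$, respectively. This forces any compact stable minimal hypersurface $\Phi(\Sigma)$ in $\mathbb{H}P^{\frac{m_{1}}{4}}\times M$ to split as $\mathbb{H}P^{\frac{m_{1}}{4}}\times \hat{\phi}(\hat{\Sigma})$, where $\hat{\phi}\colon\hat{\Sigma}\to M$ is itself a compact stable minimal immersion of codimension $1$. Thus the problem reduces to determining which of the listed target manifolds $M$ admit compact stable minimal hypersurfaces, together with the degenerate case $m_{2}=1$ (where $\hat{\phi}$ is constant and $M=S^{1}$ is the only option).

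Next, I would invoke Theorem \ref{t1.1} (Simons) to conclude there are no compact stable minimal hypersurfaces in $S^{s}$ for $s\geq 2$, and Theorem \ref{ohnita} (Lawson--Simons, Ohnita) to conclude the same for $\mathbb{C}P^{s}$, $\mathbb{H}P^{s}$, and $\mathbb{O}P^{2}$: in each of these cases the required dimension of a stable minimal submanifold must be a multiple of $2$, $4$, or $8$ respectively, which rules out codimension $1$. The only surviving situation is $M=S^{1}$ with $m_{2}=1$, where $\hat{\phi}$ degenerates to a point $q\in S^{1}$ and $\Phi(\Sigma)=\mathbb{H}P^{\frac{m_{1}}{4}}\times\{q\}$, exactly as in the exceptional case listed in the statement.

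There is no serious obstacle here: the corollary is a direct combination of Theorem \ref{cod1h} with the classical classifications \ref{t1.1} and \ref{ohnita}. The only point requiring minor care is the degenerate case $m_{2}=1$, which must be treated separately since the only connected compact Riemannian manifold of dimension $1$ is $S^{1}$, and there a point is trivially a stable minimal hypersurface.
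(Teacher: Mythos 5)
Your proposal is correct and matches the paper's own argument: the paper likewise applies Theorem \ref{cod1h} to reduce to stable minimal hypersurfaces in the second factor and then cites Theorems \ref{t1.1} and \ref{ohnita} to exclude everything except a point in $S^{1}$. Your added detail on the dimension-divisibility obstruction and the $m_{2}=1$ degenerate case is a faithful elaboration of the same route.
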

\begin{proof}
Apply Theorem \ref{cod1h} and notice that there are no stable minimal hypersurfaces in $S^{s}$,  $\mathbb{C}P^{s}$, 
$\mathbb{H}P^{s}$, or $\mathbb{O} P^{2}$ (see Theorems \ref{t1.1} and \ref{ohnita}) other than a point $q$ in $S^{1}$.
\end{proof}

\begin{corollary}
The only compact stable minimal hypersurface in the product space $\mathbb{H}P^{\frac{m_{1}}{4}}\times \mathbb{R}P^{s}$ is $\mathbb{H}P^{\frac{m_{1}}{2}}\times \mathbb{R}P^{s-1}$.
\end{corollary}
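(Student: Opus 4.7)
The plan is to invoke Theorem \ref{cod1h} directly, specialized to $M=\mathbb{R}P^{s}$. Given any compact stable minimal immersion $\Phi=(\psi,\phi):\Sigma\rightarrow \mathbb{H}P^{\frac{m_{1}}{4}}\times \mathbb{R}P^{s}$ of codimension $d=1$, Theorem \ref{cod1h} forces the splitting $\Sigma=\mathbb{H}P^{\frac{m_{1}}{4}}\times \hat{\Sigma}$ with $\Phi=Id\times \hat{\phi}$, where $\hat{\phi}:\hat{\Sigma}\rightarrow \mathbb{R}P^{s}$ is itself a compact stable minimal immersion of codimension $1$, i.e.\ a stable minimal hypersurface of $\mathbb{R}P^{s}$. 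Thus the problem reduces to identifying the compact stable minimal hypersurfaces of $\mathbb{R}P^{s}$ with the standard metric.

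For this reduction step I would cite Theorem \ref{ohnita0} (Ohnita's classification of CSMI in real projective space), which gives that the only stable minimal $(s-1)$-dimensional submanifold of $\mathbb{R}P^{s}$ is the totally geodesic real projective subspace $\mathbb{R}P^{s-1}$. Combining the two yields $\hat{\phi}(\hat{\Sigma})=\mathbb{R}P^{s-1}$, and therefore $\Phi(\Sigma)=\mathbb{H}P^{\frac{m_{1}}{4}}\times \mathbb{R}P^{s-1}$, as claimed. There is no genuine obstacle here since both ingredients have already been established; the proof is essentially a one-line application of Theorem \ref{cod1h} followed by invoking Theorem \ref{ohnita0}, exactly parallel to the corresponding corollary in the complex case handled immediately above.
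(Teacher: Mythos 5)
Your proposal is correct and follows exactly the same route as the paper: apply Theorem \ref{cod1h} to reduce to classifying compact stable minimal hypersurfaces of $\mathbb{R}P^{s}$, then invoke Theorem \ref{ohnita0} to identify $\mathbb{R}P^{s-1}$ as the only one. (Incidentally, you correctly write the conclusion with $\mathbb{H}P^{\frac{m_{1}}{4}}$, whereas the corollary as stated in the paper contains a typo reading $\mathbb{H}P^{\frac{m_{1}}{2}}$.)
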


\begin{proof}
Apply Theorem \ref{cod1h} and notice that the only stable minimal hypersurface in $\mathbb{R}P^{s}$ is
$\mathbb{R}P^{s-1}$ (see Theorem \ref{ohnita0}).
\end{proof}

\begin{theorem}
\label{hz1}
Let $\Phi=(\psi,\phi):\Sigma \rightarrow \mathbb{H}P^{\frac{m_{1}}{4}}\times M$ be a compact stable minimal immersion of codimension $d=2$ and dimension $n$, where $M$ is any Riemannian manifold of dimension $m_{2}$. Then, $\Sigma=\mathbb{H}P^{\frac{m_{1}}{4}}\times \hat{\Sigma}$, $\Phi= Id \times \hat{\phi}$ where $\hat{\phi}:\hat{\Sigma}\rightarrow M$ is a compact stable minimal immersion of codimension $2$, and therefore $\Phi(\Sigma)=\mathbb{H}P^{\frac{m_{1}}{4}}\times \hat{\phi}(\hat{\Sigma})$. In particular, for $m_{2}=1$, there are no compact stable minimal immersions of codimension $2$ in $\mathbb{H}P^{\frac{m_{1}}{4}}\times M$. And
for $m_{2}=2$, $\Sigma=\mathbb{H}P^{\frac{m_{1}}{4}}$, $\hat{\phi}$ is a constant function, and $\Phi(\Sigma)=\mathbb{H}P^{\frac{m_{1}}{4}}\times \{q\}$, for $q\in M$.
\end{theorem}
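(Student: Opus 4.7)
The plan is to follow the blueprint of Theorem \ref{cod1h} but with one extra input: we sum the three versions of the general formula of Lemma \ref{GENERALH} corresponding to $s=1,2,3$ and exploit the quaternionic Bessel inequality to force the pointwise integrand to be non-positive. Concretely, since the left-hand side of Equation (\ref{hMAIN}) is independent of $s$, summing over $s\in\{1,2,3\}$ and using $\langle J_s v,v\rangle=0$ gives
\begin{equation*}
3\sum_{A=1}^{m}-\langle N_{E_{A}},J_{\Sigma}(N_{E_{A}})\rangle=\lambda^{2}\Big(-2\sum_{k=1}^{3}\sum_{j=1}^{n}\sum_{\beta=1}^{2}\langle J_{k}(\eta_{\beta}^{1}),e_{j}^{1}\rangle^{2}+2\sum_{s=1}^{3}\langle J_{s}(\eta_{1}^{1}),\eta_{2}^{1}\rangle^{2}-3\sum_{\beta,l=1}^{2}\langle \eta_{\beta}^{1},\eta_{l}^{1}\rangle^{2}\Big).
\end{equation*}

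Next, the key estimate is the quaternionic analogue of Bessel's inequality: for any $v,w\in T\mathbb{H}P^{\frac{m_{1}}{4}}$, the vectors $\{v,J_{1}v,J_{2}v,J_{3}v\}$ form an orthogonal family of equal norm $|v|$, so $\langle v,w\rangle^{2}+\sum_{s=1}^{3}\langle J_{s}v,w\rangle^{2}\le |v|^{2}|w|^{2}$. Applying this to $v=\eta_{1}^{1}$, $w=\eta_{2}^{1}$ and expanding $\sum_{\beta,l}\langle\eta_{\beta}^{1},\eta_{l}^{1}\rangle^{2}=|\eta_{1}^{1}|^{4}+2\langle\eta_{1}^{1},\eta_{2}^{1}\rangle^{2}+|\eta_{2}^{1}|^{4}$ yields, after rearrangement,
\begin{equation*}
3\sum_{A=1}^{m}-\langle N_{E_{A}},J_{\Sigma}(N_{E_{A}})\rangle\le\lambda^{2}\Big(-2\sum_{k,j,\beta}\langle J_{k}(\eta_{\beta}^{1}),e_{j}^{1}\rangle^{2}-8\langle\eta_{1}^{1},\eta_{2}^{1}\rangle^{2}-3(|\eta_{1}^{1}|^{2}-|\eta_{2}^{1}|^{2})^{2}-4|\eta_{1}^{1}|^{2}|\eta_{2}^{1}|^{2}\Big)\le 0.
\end{equation*}
Since $\Sigma$ is stable, the integral of the left-hand side is non-negative, hence each non-positive summand on the right must vanish pointwise. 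The conjunction $|\eta_{1}^{1}|=|\eta_{2}^{1}|$ together with $|\eta_{1}^{1}|\,|\eta_{2}^{1}|=0$ forces $\eta_{1}^{1}=\eta_{2}^{1}=0$, so $N_{p}\Sigma\subset\{0\}\times T_{\phi(p)}M$ and consequently $T_{\psi(p)}\mathbb{H}P^{\frac{m_{1}}{4}}\times\{0\}\subset d\Phi_{p}(T_{p}\Sigma)$.

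From this point on the argument is identical to that of Theorem \ref{cod1c}/\ref{cod1h}. Define the orthogonal complementary distributions $D_{1}=\ker(d\phi)$ and $D_{2}=\ker(d\psi)$ on $\Sigma$; the Riemannian almost-product structure $P^{\Sigma}$ associated to this splitting is parallel (via the same computation using the ambient parallel operator $P(v_{1},v_{2})=(v_{1},-v_{2})$), so both distributions are totally geodesic; in particular the horizontal distribution $D_{1}$ is integrable. The map $\psi:\Sigma\to\mathbb{H}P^{\frac{m_{1}}{4}}$ is then a surjective Riemannian submersion with totally geodesic fibers, and because $\mathbb{H}P^{\frac{m_{1}}{4}}$ is simply connected, Lemma \ref{lemmafact} gives that $\psi$ is the trivial submersion. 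Writing $\hat{\Sigma}=\psi^{-1}(s)$ for a fixed $s$, a direct check shows $\phi$ does not depend on the first coordinate, so it descends to an isometric immersion $\hat{\phi}:\hat{\Sigma}\to M$ of codimension $2$, which is minimal and stable by the same computation of the mean curvature vector and the restriction of admissible variations as in Theorem \ref{cod1c}. The particular cases follow by dimension counting: for $m_{2}=1$ we would need $\dim\hat{\Sigma}=-1$, impossible; for $m_{2}=2$ we get $\dim\hat{\Sigma}=0$, whence $\hat{\phi}$ is constant and $\Phi(\Sigma)=\mathbb{H}P^{\frac{m_{1}}{4}}\times\{q\}$.

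The main obstacle is the derivation of the sharp pointwise inequality for the integrand; naively using a single $s$ leaves the sign of the middle term $\sum_{\beta,l}\langle J_{s}\eta_{\beta}^{1},\eta_{l}^{1}\rangle^{2}$ uncontrolled. Summing over $s=1,2,3$ and invoking the quaternionic orthogonality of $\{v,J_{1}v,J_{2}v,J_{3}v\}$ is precisely what converts this indefinite term into a bound by $|\eta_{1}^{1}|^{2}|\eta_{2}^{1}|^{2}-\langle\eta_{1}^{1},\eta_{2}^{1}\rangle^{2}$, which is exactly what is needed for every term in the final estimate to be non-positive. Once this step is in place, the rest of the proof is a routine adaptation of the codimension-1 framework already established.
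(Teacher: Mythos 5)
Your proof is correct, and the key step is organized genuinely differently from the paper's. The paper works with Equation (\ref{hMAIN}) for a \emph{fixed} $s$: it splits into the cases $\eta_{1}^{1}=0$ and $\eta_{1}^{1}\neq 0$, decomposes $\eta_{2}^{1}$ against $\eta_{1}^{1}$, $J_{s}(\eta_{1}^{1})$ and an auxiliary orthogonal unit vector $X$ to show the integrand is non-positive, and then concludes from stability that $\eta_{2}^{1}=\pm J_{s}(\eta_{1}^{1})$ for each $s\in\{1,2,3\}$ separately (exactly as in Lemma \ref{cod2c}); only afterwards does it invoke the quaternionic relations, computing $\langle J_{1}(\eta_{1}^{1}),J_{2}(\eta_{1}^{1})\rangle$ in two ways to force $\eta_{2}^{1}=0$ and hence $\eta_{1}^{1}=0$. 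You instead sum the identity over $s=1,2,3$ (correctly picking up the factor $2$ on the $k\neq s$ term and reducing the middle term to $2\sum_{s}\langle J_{s}(\eta_{1}^{1}),\eta_{2}^{1}\rangle^{2}$ via skew-adjointness) and then apply Bessel's inequality for the orthogonal family $\{v,J_{1}v,J_{2}v,J_{3}v\}$, which yields a single non-positive upper bound whose vanishing gives $|\eta_{1}^{1}|=|\eta_{2}^{1}|$ and $|\eta_{1}^{1}|\,|\eta_{2}^{1}|=0$ simultaneously. This buys a cleaner argument: no case split, no auxiliary vector $X$, and no intermediate statement $\eta_{2}^{1}=\pm J_{s}(\eta_{1}^{1})$; the same quaternionic orthogonality that the paper uses at the end is absorbed into the Bessel estimate at the start. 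The price is that your route is specific to the quaternionic case (it needs all three structures at once), whereas the paper's fixed-$s$ computation is a verbatim reuse of the complex-case Lemma \ref{cod2c} with $J$ replaced by $J_{s}$, which is why the paper presents it that way. The remainder of your argument (the splitting $d\Phi_{p}(T_{p}\Sigma)=\bar{D_{1}}(p)\oplus\bar{D_{2}}(p)$, total geodesy of $D_{1}$ and $D_{2}$ via the parallel almost-product structure, Lemma \ref{lemmafact} using simple connectedness of $\mathbb{H}P^{\frac{m_{1}}{4}}$, and the dimension count for $m_{2}=1,2$) coincides with the paper's reduction to the proof of Theorem \ref{cod1c}.
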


\begin{proof} Let $p\in \Sigma$. Since $d=2$, for $s\in \{1,2,3\}$, Equation (\ref{hMAIN}) becomes

\begin{equation}
\begin{aligned}
\label{hj1}
    &\sum_{A=1}^{m} -\langle N_{E_{A}},J_{\Sigma}(N_{E_{A}})\rangle \\
\displaystyle &=\lambda^{2}\Big (
-\sum_{\substack{k=1\\
         k\neq s}}^{3}\sum_{j=1}^{n}\sum_{\beta=1}^{2}\langle J_{k}(\eta^{1}_{\beta}),e^{1}_{j}\rangle^{2}\displaystyle +\sum_{\beta=1}^{2}\sum_{l=1}^{2}\langle J_{s}(\eta^{1}_{\beta}),\eta^{1}_{l}\rangle^{2}-\langle \eta^{1}_{\beta},\eta^{1}_{l}\rangle^{2}\Big )\\
         \hspace{-0.3cm}\displaystyle&=\lambda^{2}\Big (
-\sum_{\substack{k=1\\
         k\neq s}}^{3}\sum_{j=1}^{n}\sum_{\beta=1}^{2}\langle J_{k}(\eta^{1}_{\beta}),e^{1}_{j}\rangle^{2}\displaystyle- |\eta_{1}^{1}|^{4}-|\eta_{2}^{1}|^{4}-2\langle \eta_{1}^{1},\eta_{2}^{1}\rangle^{2}+2\langle J_{s}(\eta_{2}^{1}),\eta_{1}^{1}\rangle^{2} \Big ).
          \end{aligned}
\end{equation}
Notice that, ignoring the first term in the last equality, we have the same expression as in Equation (\ref{MAIND22}) (instead of $J$ we have the structure $J_{s}$). This term does not add any new difficulties because it is non-positive. From this point on, most of the arguments are analogous to those found in the proof of Lemma \ref{cod2c}.\\

If $\eta_{1}^{1}=0$, then 

\begin{equation}
\label{hj2}
    \displaystyle \sum_{A=1}^{m} -\langle N_{E_{A}},J_{\Sigma}(N_{E_{A}})\rangle=\lambda^{2}\Big (
-\sum_{\substack{k=1\\
         k\neq s}}^{3}\sum_{j=1}^{n}\langle J_{k}(\eta^{1}_{2}),e^{1}_{j}\rangle^{2}\displaystyle-|\eta_{2}^{1}|^{4}\Big )\leq 0.
\end{equation}

If $\eta_{1}^{1}\neq 0$, we can write $\eta_{2}^{1}$ in terms of $\eta_{1}^{1}$, $J_{s}(\eta_{1}^{1})$, and $X$ for some $X\in T_{\psi(p)}\mathbb{H}P^{\frac{m_{1}}{4}}$ which is unitary and orthogonal to $\eta_{1}^{1}$ and $J_{s}(\eta_{1}^{1})$. Then, 
\begin{center}
    $\displaystyle \langle \eta_{2}^{1},J_{s}(\eta_{1}^{1})\rangle^{2}= |\eta_{1}^{1}|^{2}|\eta_{2}^{1}|^{2}-\langle \eta_{2}^{1},\eta_{1}^{1}\rangle^{2}-|\eta_{1}^{1}|^{2}\langle\eta_{2}^{1},X\rangle^{2}.$
\end{center}
Replacing the last equation in Equation (\ref{hj1}), we have 

\begin{equation}
\label{hj3}
    \sum_{A=1}^{m} -\langle N_{E_{A}},J_{\Sigma}(N_{E_{A}})\rangle
\end{equation}

\begin{center}   $\displaystyle=\lambda^{2}\Big (
-\sum_{\substack{k=1\\
         k\neq s}}^{3}\sum_{j=1}^{n}\sum_{\beta=1}^{2}\langle J_{k}(\eta^{1}_{\beta}),e^{1}_{j}\rangle^{2}-(|\eta_{1}^{1}|^{2}-|\eta_{2}^{1}|^{2})^{2}-4\langle\eta_{1}^{1},\eta_{2}^{1}\rangle^{2}
-2|\eta_{1}^{1}|^{2}\langle \eta_{2}^{1},X\rangle^{2}\Big )\leq 0.
          $
\end{center}

From Equations (\ref{hj2}) and (\ref{hj3}), we have 
\begin{center}
    $\displaystyle \sum_{A=1}^{m} -\langle N_{E_{A}},J_{\Sigma}(N_{E_{A}})\rangle\leq 0$.
\end{center}
We follow the proof of Lemma \ref{cod2c} (from Equation (\ref{signo}) on) to obtain,
\begin{equation}
\label{hj4}
    \eta_{2}^{1}=\pm J_{s}(\eta_{1}^{1}),
\end{equation}
for $s\in\{1,2,3\}$. \\

From Equation (\ref{hj4}), we have

\begin{center}
    $\langle J_{1}(\eta_{1}^{1}),J_{2}(\eta_{1}^{1})\rangle=\langle\pm\eta_{2}^{1},\pm\eta_{2}^{1}\rangle=\pm |\eta_{2}^{1}|^{2}$.
\end{center}
On the other hand,

\begin{center}
    $\langle J_{1}(\eta_{1}^{1}),J_{2}(\eta_{1}^{1})\rangle=-\langle J_{2}(J_{1}(\eta_{1}^{1})),\eta_{1}^{1}\rangle=\langle J_{3}(\eta_{1}^{1}),\eta_{1}^{1}\rangle=0$.
\end{center}
Hence $\eta_{2}^{1}=0$. Since $J_{s}$ is an isometry from Equation (\ref{hj4}), $\eta_{1}^{1}=0$. Therefore, at $p\in \Sigma$, we have that $\eta_{1}=(0, \eta_{1}^{2})$ and $\eta_{2}=(0, \eta_{2}^{2})$. Thus,

\begin{center}
    $ d\Phi_{p}(T_{p}\Sigma)= \bar{D_{1}}(p) \bigoplus \bar{D_{2}}(p)$,
\end{center}
where $\bar{D_{1}}$ and $\bar{D_{2}}$ are given by: 

\begin{center}
    $\bar{D_{1}}(p)=\{(x,0): x\in T_{\psi(p)}\mathbb{H}P^{\frac{m_{1}}{4}}\} $\\
    
    $\bar{D_{2}}(p)= \{(0,w): w\in [Gen\{\eta_{1}^{2}, \eta_{2}^{2}\}]^{\perp_{M}}  \}$,
\end{center}
where $[.]^{\perp_{M}}$ is the orthogonal complement in $T_{\phi(p)}M$ and $Gen\{\eta_{1}^{2}, \eta_{2}^{2}\}$ is the subspace generated by the vectors $\eta_{1}^{2}, \eta_{2}^{2}$ in  $T_{\phi(p)}M$. Since $\mathbb{H}P^{\frac{m_{1}}{4}}$ is simply connected, the rest of the proof follows by applying the same proof of Theorem \ref{cod1c}. 
\end{proof}

\begin{corollary}
There are no compact stable minimal immersions of codimension $d=2$ in the product manifold $\mathbb{H}P^{\frac{m_{1}}{4}}\times S^{s}$, $\mathbb{H}P^{\frac{m_{1}}{4}}\times \mathbb{O} P^{2}$, or $\mathbb{H}P^{\frac{m_{1}}{4}}\times \mathbb{H}P^{s}$ other than  $\mathbb{H}P^{\frac{m_{1}}{4}}\times \{q\}$ in  $\mathbb{H}P^{\frac{m_{1}}{4}}\times S^{2}$, where $q\in S^{2}$. 
\end{corollary}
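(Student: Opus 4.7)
The plan is to apply Theorem \ref{hz1} and then feed the resulting classification problem on the second factor into the known results in Theorems \ref{t1.1} and \ref{ohnita}. By Theorem \ref{hz1}, any compact stable minimal immersion $\Phi$ of codimension $2$ in $\mathbb{H}P^{\frac{m_{1}}{4}}\times M$ must split as $\Phi = \mathrm{Id}\times\hat{\phi}$, where $\hat{\phi}:\hat{\Sigma}\to M$ is itself a compact stable minimal immersion of codimension $2$ (with $\hat\phi$ collapsing to a constant when $m_{2}=2$, and no such $\Phi$ existing when $m_{2}=1$). Thus I only need to check, for each of the three target manifolds $M = S^{s}, \mathbb{O}P^{2}, \mathbb{H}P^{s}$, whether $M$ admits a compact stable minimal submanifold of codimension exactly $2$.

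For $M = \mathbb{H}P^{s}$, Theorem \ref{ohnita} forces any CSMI to be a quaternionic projective subspace $\mathbb{H}P^{\ell}$, of real dimension $4\ell$ and codimension $4(s-\ell)$, which is always a multiple of $4$; codimension $2$ is never attained. For $M = \mathbb{O}P^{2}$ (real dimension $16$), Theorem \ref{ohnita} pins down the only CSMI as a Cayley projective line of dimension $8$, i.e.\ codimension $8$, again ruling out codimension $2$. For $M = S^{s}$ with $s\ge 3$, Theorem \ref{t1.1} excludes any CSMI of positive dimension, so in particular none of codimension $2$. The case $s=1$ is ruled out by the $m_{2}=1$ clause of Theorem \ref{hz1}. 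The only surviving case is $M = S^{2}$, where ``codimension $2$'' forces $\hat{\Sigma}$ to be $0$-dimensional; a point in $S^{2}$ is trivially a compact stable minimal submanifold, and so the $m_{2}=2$ clause of Theorem \ref{hz1} produces exactly the exceptional family $\mathbb{H}P^{\frac{m_{1}}{4}}\times\{q\}$ with $q\in S^{2}$.

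There is no genuine obstacle here: the argument is a bookkeeping application of Theorem \ref{hz1} together with the ambient classifications in Theorems \ref{t1.1} and \ref{ohnita}. The only point requiring a moment's care is isolating the $s=2$ sphere case, where a zero-dimensional ``submanifold'' (a single point) really does qualify as a CSMI of codimension $2$ and therefore must be recorded as the sole exception in the statement.
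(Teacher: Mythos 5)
Your proposal is correct and follows essentially the same route as the paper: apply Theorem \ref{hz1} to reduce to classifying codimension-$2$ CSMI in the second factor, then invoke Theorems \ref{t1.1} and \ref{ohnita} to see that only a point in $S^{2}$ survives. Your case-by-case bookkeeping (codimension of $\mathbb{H}P^{\ell}$ in $\mathbb{H}P^{s}$ is a multiple of $4$, the Cayley line has codimension $8$, Simons rules out $s\geq 3$) simply makes explicit what the paper leaves to the reader.
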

\begin{proof}
Apply Theorem \ref{hz1} and notice that there are no stable minimal immersions of codimension $2$ in $S^{s}$, $\mathbb{H}P^{s}$, or $\mathbb{O} P^{2}$ (see Theorems \ref{t1.1} and \ref{ohnita}) other than a point $q$ in $S^{2}$.
\end{proof}

\begin{corollary}
The only compact stable minimal immersion of codimension $2$ in the product space $\mathbb{H}P^{\frac{m_{1}}{4}}\times \mathbb{R}P^{s}$ is $\mathbb{H}P^{\frac{m_{1}}{2}}\times \mathbb{R}P^{s-2}$, and in the product space $\mathbb{H}P^{\frac{m_{1}}{4}}\times \mathbb{C}P^{s}$ is  $\mathbb{H}P^{\frac{m_{1}}{4}}\times M^{2s-2}$, where $M$ is a complex submanifold of dimension $2s-2$ immersed in  $\mathbb{C}P^{s}$.
\end{corollary}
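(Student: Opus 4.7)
The plan is to reduce this immediately to Theorem \ref{hz1} together with the known classifications of compact stable minimal submanifolds in compact rank one symmetric spaces recalled in Theorems \ref{ohnita0} and \ref{ohnita}. Since there is no genuinely new computation to perform, the only thing to check carefully is that the codimensions and dimensions match up correctly under the splitting provided by Theorem \ref{hz1}.

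First, let $\Phi=(\psi,\phi):\Sigma\to \mathbb{H}P^{\frac{m_{1}}{4}}\times M$ be a compact stable minimal immersion of codimension $d=2$, taking $M=\mathbb{R}P^{s}$ in the first case and $M=\mathbb{C}P^{s}$ in the second. Theorem \ref{hz1} applies and yields an isometric splitting $\Sigma=\mathbb{H}P^{\frac{m_{1}}{4}}\times \hat{\Sigma}$ with $\Phi=\mathrm{Id}\times\hat{\phi}$, where $\hat{\phi}:\hat{\Sigma}\to M$ is itself a compact stable minimal immersion of codimension $2$ in $M$. In particular $\Phi(\Sigma)=\mathbb{H}P^{\frac{m_{1}}{4}}\times \hat{\phi}(\hat{\Sigma})$, so it suffices to identify which codimension $2$ compact stable minimal immersions $\hat{\phi}$ can occur in each choice of $M$.

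For $M=\mathbb{R}P^{s}$, codimension $2$ means $\dim\hat{\Sigma}=s-2$. By Theorem \ref{ohnita0}, the only compact stable minimal submanifolds of $\mathbb{R}P^{s}$ are the totally geodesic projective subspaces $\mathbb{R}P^{k}$; matching dimensions forces $\hat{\phi}(\hat{\Sigma})=\mathbb{R}P^{s-2}$, and hence $\Phi(\Sigma)=\mathbb{H}P^{\frac{m_{1}}{4}}\times \mathbb{R}P^{s-2}$. For $M=\mathbb{C}P^{s}$, codimension $2$ means $\dim_{\mathbb{R}}\hat{\Sigma}=2s-2$. The complex case of Theorem \ref{ohnita} says every compact stable minimal submanifold of $\mathbb{C}P^{s}$ is a complex submanifold, i.e.\ each tangent space is invariant under the complex structure of $\mathbb{C}P^{s}$. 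Therefore $\hat{\phi}(\hat{\Sigma})$ is a complex submanifold of complex dimension $s-1$ (real dimension $2s-2$), and $\Phi(\Sigma)=\mathbb{H}P^{\frac{m_{1}}{4}}\times M^{2s-2}$ with $M^{2s-2}$ such a complex submanifold of $\mathbb{C}P^{s}$.

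There is no real obstacle here: the whole content of the corollary is already packaged inside Theorem \ref{hz1}, and the only point requiring attention is the bookkeeping of the (real) dimensions and codimensions in each factor. The parity issue that sometimes appears in Theorem \ref{ohnita} (stable complex submanifolds must have even real dimension) is automatic in the $\mathbb{C}P^{s}$ case because $\dim_{\mathbb{R}}\hat{\Sigma}=2s-2$ is already even, so no extra case analysis is needed.
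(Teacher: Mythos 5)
Your proposal is correct and follows essentially the same route as the paper: apply Theorem \ref{hz1} to split off the $\mathbb{H}P^{\frac{m_{1}}{4}}$ factor, then invoke Theorems \ref{ohnita0} and \ref{ohnita} to identify the codimension-$2$ compact stable minimal submanifolds of $\mathbb{R}P^{s}$ and $\mathbb{C}P^{s}$. The extra dimension bookkeeping you include is fine but not needed beyond what the paper records.
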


\begin{proof}
Apply Theorem \ref{hz1} and notice that the only stable minimal immersions of codimension $2$ in $\mathbb{R}P^{s}$ and $\mathbb{C}P^{s}$  are
$\mathbb{R}P^{s-2}$ and $M$, respectively, where $M$ is a complex submanifold of dimension $2s-2$ immersed in  $\mathbb{C}P^{s}$ (see Theorems \ref{ohnita0} and \ref{ohnita}).
\end{proof}

\subsection{DIMENSION $1$ AND $2$}
\label{QD12}
The arguments that will be used in the proof of Theorem \ref{dim1h} are analogous to the arguments used in the proof of Theorem  \ref{dim1c}. The arguments that will be presented in the first part of the proof of Theorem \ref{dim2h} are similar to the arguments presented in the proof of Lemma \ref{cod2c}. Then, after obtaining a characterization of the tangent space of the immersion, the second part of the proof follows similarly the proof of Theorem \ref{dim1c}.

\begin{theorem}
\label{dim1h}
Let $\Phi=(\psi,\phi):\Sigma \rightarrow \mathbb{H}P^{\frac{m_{1}}{4}}\times M$ be a compact stable minimal immersion of dimension $n=1$ and codimension $d$, where $M$ is any Riemannian manifold of dimension $m_{2}$. Then, $\phi:\Sigma \rightarrow M$ is a  stable geodesic, $\psi$ is a constant function, and therefore $\Phi(\Sigma)=\{r\}\times \phi(\Sigma)$ with $r$ a point of $\mathbb{H}P^{\frac{m_{1}}{4}}$.

\end{theorem}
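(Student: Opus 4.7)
The plan is to mimic the strategy used for the complex analogue (Theorem \ref{dim1c}), plugging $n=1$ into the general formula from the quaternionic case (Lemma \ref{GENERALH}, Equation~(\ref{hMAIN2})) and exploiting the fact that each extra ``error term'' peculiar to the quaternionic setting already carries the correct sign, so no delicate cancellation is needed.

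First I would specialize Equation~(\ref{hMAIN2}) to $n=1$. Writing a unit tangent vector at $p\in\Sigma$ as $e=(e^{1},e^{2})$, the double sum $\sum_{i,j=1}^{n}\langle J_{s}(e^{1}_{i}),e_{j}^{1}\rangle^{2}-\langle e^{1}_{i},e_{j}^{1}\rangle^{2}$ collapses to $\langle J_{s}(e^{1}),e^{1}\rangle^{2}-|e^{1}|^{4}$, and since $J_{s}$ is skew-adjoint we get $\langle J_{s}(e^{1}),e^{1}\rangle=0$. Thus
\begin{equation*}
\sum_{A=1}^{m} -\langle N_{E_{A}},J_{\Sigma}(N_{E_{A}})\rangle
=\lambda^{2}\Big(-\sum_{\substack{k=1\\ k\neq s}}^{3}\sum_{\beta=1}^{d}\langle J_{k}(\eta^{1}_{\beta}),e^{1}\rangle^{2}-|e^{1}|^{4}\Big)\leq 0,
\end{equation*}
because both terms inside the parenthesis are manifestly non-positive. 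This is the key point where the quaternionic proof differs from the complex one: the extra sum over $k\neq s$ is harmless precisely because it has the correct sign.

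Second, integrating over $\Sigma$ and using stability of $\Phi$ gives
\begin{equation*}
0\leq \sum_{A=1}^{m}-\int_{\Sigma}\langle N_{E_{A}},J_{\Sigma}(N_{E_{A}})\rangle\,d\Sigma\leq 0,
\end{equation*}
so the integrand vanishes pointwise. In particular $|e^{1}|^{4}=0$, so $e^{1}=0$ at every $p\in\Sigma$. Consequently $d\Phi_{p}(T_{p}\Sigma)=\{\alpha(0,e^{2}):\alpha\in\mathbb{R}\}$, which forces $d\psi_{p}(x)=0$ for all $x\in T_{p}\Sigma$, and since $\Sigma$ is connected, $\psi$ is a constant map $\psi\equiv r\in\mathbb{H}P^{\frac{m_{1}}{4}}$. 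It follows immediately that $\Phi(\Sigma)=\{r\}\times\phi(\Sigma)$ and that $\phi:\Sigma\to M$ is itself an isometric immersion (since $\Phi$ is).

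Third, I would show $\phi$ is a minimal immersion of dimension one (hence a geodesic) by the same mean-curvature computation as in the proof of Theorem \ref{dim1c}: decompose $N_{p}\Sigma$ as $\{(v,0):v\in T_{r}\mathbb{H}P^{\frac{m_{1}}{4}}\}\oplus\{(0,w):w\in [e^{2}]^{\perp_{M}}\}$, use the product connection formula (\ref{SCM1M2}) to see that the mean curvature vector $H$ of $\Phi$ has vanishing first component automatically, and observe that its second component is precisely the mean curvature vector of $\phi$; hence $H=0$ forces $\phi$ to be minimal. Finally, stability of $\phi$ follows from stability of $\Phi$ by restricting the quadratic form $-\int_{\Sigma}\langle J_{\Sigma}X,X\rangle\,d\Sigma$ to normal sections of the form $X=(0,Y)$ with $Y$ a normal section of $\phi$, exactly as in \cite{torralbo2014stable}. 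There is no substantive obstacle here: the only subtlety is the verification of the sign in the quaternionic general formula, which is handled by the skew-adjointness of each $J_{s}$.
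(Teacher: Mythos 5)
Your proposal is correct and follows essentially the same route as the paper: specialize Equation~(\ref{hMAIN2}) to $n=1$, observe that the extra quaternionic sum over $k\neq s$ is non-positive so the whole expression is $\leq 0$, integrate and use stability to force $e^{1}=0$ pointwise, and then repeat the mean-curvature and stability arguments from the proof of Theorem~\ref{dim1c}. The only cosmetic difference is that you make explicit the skew-adjointness step $\langle J_{s}(e^{1}),e^{1}\rangle=0$, which the paper absorbs silently into Equation~(\ref{hG0}).
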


\begin{proof}
Since $n=1$, Equation (\ref{hMAIN2}) becomes

\begin{equation}
\label{hG0}
\sum_{A=1}^{m} -\langle N_{E_{A}},J_{\Sigma}(N_{E_{A}})\rangle=\lambda^{2}\Big (-\sum_{\substack{k=1\\
         k\neq s}}^{3}\sum_{\beta=1}^{d}\langle J_{k}(\eta^{1}_{\beta}),e^{1}\rangle^{2}-|e^{1}|^{4}\Big ),
\end{equation}
where $e=(e^{1},e^{2})$ is a unit vector in $d\Phi_{p}(T_{p}\Sigma)$, for $p\in\Sigma$. The last expression just differs from Equation (\ref{G0}) in the first term. As in the last subsection, this term does not add any new difficulties because it is non-positive. We have:
 
\begin{center}
$\displaystyle 0\leq  \sum_{A=1}^{m} -\int_{\Sigma}\langle N_{E_{A}},J_{\Sigma}(N_{E_{A}})\rangle d\Sigma=-\lambda^{2}\int_{\Sigma} \sum_{\substack{k=1\\
         k\neq s}}^{3}\sum_{\beta=1}^{d}\langle J_{k}(\eta^{1}_{\beta}),e^{1}\rangle^{2}+|e^{1}|^{4}d\Sigma\leq 0,$
\end{center}
where we have used the fact that $\Sigma$ is stable in the first inequality. Hence, for $p\in\Sigma$, $e^{1}=0$. Therefore, 

\begin{center}
    $ d\Phi_{p}(T_{p}\Sigma)= \{ \alpha (0,e^{2}):\alpha\in \mathbb{R}\}$,
\end{center}
and thus,
\begin{center}
    $N_{p}\Sigma = \{ (v,0): v\in T_{\psi(p)}\mathbb{H}P^{\frac{m_{1}}{4}}\}\bigoplus \{(0,w): w\in [e^{2}]^{\perp_{M}}\}$.
\end{center}
The rest of the proof follows in a similar way as in the proof of Theorem \ref{dim1c}.
\end{proof}

\begin{corollary}
There are no compact stable geodesics in the product space $\mathbb{H}P^{\frac{m_{1}}{4}}\times S^{s}$, $\mathbb{H}P^{\frac{m_{1}}{4}}\times \mathbb{O} P^{2}$, or  $\mathbb{H}P^{\frac{m_{1}}{4}}\times \mathbb{K}P^{s}$, where $\mathbb{K}\in \{\mathbb{C}, \mathbb{H}\}$ other than $\{r\}\times S^{1}$ in $\mathbb{H}P^{\frac{m_{1}}{4}}\times S^{1}$, where $r\in \mathbb{H}P^{\frac{m_{1}}{4}}$.
\end{corollary}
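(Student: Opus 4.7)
The plan is to reduce the statement directly to Theorem \ref{dim1h} and then invoke the known classification of compact stable geodesics in compact rank one symmetric spaces. Let $\Phi=(\psi,\phi):\Sigma\rightarrow \mathbb{H}P^{\frac{m_{1}}{4}}\times M$ be a compact stable geodesic, where $M$ will successively be $S^{s}$, $\mathbb{O}P^{2}$, or $\mathbb{K}P^{s}$ with $\mathbb{K}\in\{\mathbb{C},\mathbb{H}\}$. Theorem \ref{dim1h} immediately forces $\psi$ to be constant, say $\psi\equiv r$, and $\phi:\Sigma\rightarrow M$ to be a compact stable geodesic in $M$. Therefore the question collapses to: for which of the target factors $M$ does a compact stable geodesic exist?

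For $M=\mathbb{C}P^{s}$, the first item of Theorem \ref{ohnita} says that the dimension of any CSMI must be even, so a $1$-dimensional CSMI cannot exist. For $M=\mathbb{H}P^{s}$, the second item requires the dimension to be a multiple of $4$, again ruling out $n=1$. For $M=\mathbb{O}P^{2}$, the third item forces $n=8$, so there is no stable geodesic. In each of these three cases we obtain no compact stable geodesic in $\mathbb{H}P^{\frac{m_{1}}{4}}\times M$, which is exactly the claim.

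The remaining case $M=S^{s}$ is handled by Theorem \ref{t1.1}: a compact minimal $1$-dimensional submanifold of $S^{s}$ has index $\geq s-1$, with equality (index $0$) only when $\Sigma=S^{1}=S^{s}$, i.e.\ when $s=1$. Hence the only possibility is $s=1$ and $\phi(\Sigma)=S^{1}$, giving $\Phi(\Sigma)=\{r\}\times S^{1}\subset \mathbb{H}P^{\frac{m_{1}}{4}}\times S^{1}$. Conversely, such a slice is manifestly a compact stable geodesic, since its normal Jacobi operator decomposes according to the product structure and each factor is stable (the $S^{1}$ factor is a totally geodesic great circle, hence stable, and the $\mathbb{H}P^{\frac{m_{1}}{4}}$ factor contributes no negative modes by the product argument used throughout Section \ref{complexcompu}).

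I do not anticipate a genuine obstacle: the work has already been done in Theorem \ref{dim1h} (which supplies the slice structure) and in Theorems \ref{t1.1} and \ref{ohnita} (which eliminate stable geodesics on the factor $M$ outside the single exceptional case). The only thing to be careful about is to state the exceptional case correctly and to verify that the exceptional slice $\{r\}\times S^{1}$ is indeed stable, which is immediate from the product decomposition of the Jacobi operator on a totally geodesic product submanifold.
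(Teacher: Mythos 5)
Your proposal is correct and follows essentially the same route as the paper: apply Theorem \ref{dim1h} to reduce to a stable geodesic in the second factor $M$, then invoke Theorems \ref{t1.1} and \ref{ohnita} to conclude that no such geodesic exists except $S^{1}$ in $S^{1}$. The extra verification of stability of the exceptional slice is harmless but not needed, since Theorem \ref{dim1h} is stated as a necessary condition and the paper's corollary is read the same way.
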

\begin{proof}
Apply Theorem \ref{dim1h} and notice that there are no stable geodesics in $S^{s}$, $\mathbb{C}P^{s}$, 
$\mathbb{H}P^{s}$, or $\mathbb{O} P^{2}$ (see Theorems \ref{t1.1} and \ref{ohnita}) other than $S^{1}$ in $S^{1}$.
\end{proof}

\begin{corollary}
The only compact stable geodesic in the product space $\mathbb{H}P^{\frac{m_{1}}{4}}\times \mathbb{R}P^{s}$ is $\{r\}\times \mathbb{R}P^{1}$, $r\in \mathbb{H}P^{\frac{m_{1}}{4}}$.
\end{corollary}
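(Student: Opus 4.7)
The plan is to combine Theorem \ref{dim1h} of this subsection with Ohnita's classification Theorem \ref{ohnita0}, exactly in the same spirit as the three corollaries immediately preceding this statement. First, I would apply Theorem \ref{dim1h} to the ambient product $\mathbb{H}P^{\frac{m_{1}}{4}} \times \mathbb{R}P^{s}$: any compact stable minimal immersion $\Phi = (\psi,\phi) : \Sigma \to \mathbb{H}P^{\frac{m_{1}}{4}} \times \mathbb{R}P^{s}$ of dimension $n=1$ must split as $\Phi(\Sigma) = \{r\} \times \phi(\Sigma)$, where $r \in \mathbb{H}P^{\frac{m_{1}}{4}}$ is a fixed point and $\phi : \Sigma \to \mathbb{R}P^{s}$ is itself a compact stable geodesic. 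This step is immediate and reduces the classification to a question on the second factor alone.

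Next, I would invoke Theorem \ref{ohnita0}: a compact stable minimal submanifold of $\mathbb{R}P^{s}$ of dimension $n$ is a real projective subspace $\mathbb{R}P^{n}$. In the case $n=1$, this forces $\phi(\Sigma) = \mathbb{R}P^{1}$, and hence $\Phi(\Sigma) = \{r\} \times \mathbb{R}P^{1}$, as claimed. Conversely, a slice of the form $\{r\} \times \mathbb{R}P^{1}$ is totally geodesic in the Riemannian product and inherits stability from the stability of $\mathbb{R}P^{1}$ in $\mathbb{R}P^{s}$ (compare the preliminaries in \cite{torralbo2014stable}), so the example is genuine.

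I do not anticipate any technical obstacle: the whole argument is a two-line deduction once the two classification theorems above are on the table. The structure is identical to that of the corollaries just before, the only change being that $\mathbb{R}P^{s}$ replaces $S^{s}$, $\mathbb{C}P^{s}$, $\mathbb{H}P^{s}$, or $\mathbb{O}P^{2}$ as the second factor. This is precisely the case where a compact stable geodesic in the second factor does exist (namely $\mathbb{R}P^{1}$), which is why the conclusion asserts the existence of exactly one such slice rather than a non-existence statement.
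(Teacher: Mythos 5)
Your proposal is correct and follows exactly the paper's own argument: apply Theorem \ref{dim1h} to reduce to a compact stable geodesic in the $\mathbb{R}P^{s}$ factor, then invoke Theorem \ref{ohnita0} to identify it as $\mathbb{R}P^{1}$. The additional converse remark about the slice inheriting stability is a harmless elaboration of what the paper leaves implicit.
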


\begin{proof}
Apply Theorem \ref{dim1h} and notice that the only stable geodesic in $\mathbb{R}P^{s}$ is
$\mathbb{R}P^{1}$ (see Theorem \ref{ohnita0}).
\end{proof}


\begin{theorem}
\label{dim2h}
Let $\Phi=(\psi,\phi):\Sigma \rightarrow \mathbb{H}P^{\frac{m_{1}}{4}}\times M$ be a compact stable minimal immersion of dimension $n=2$ and codimension $d$, where $M$ is any Riemannian manifold of dimension $m_{2}$. Then, $\phi:\Sigma \rightarrow M$ is a  stable minimal immersion of dimension $2$, $\psi$ is a constant function, and therefore $\Phi(\Sigma)=\{r\}\times \phi(\Sigma)$ with $r$ a point of $\mathbb{H}P^{\frac{m_{1}}{4}}$.
\end{theorem}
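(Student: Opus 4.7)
The plan is to use Lemma \ref{GENERALH}, specifically Equation (\ref{hMAIN2}), with $n=2$, and follow closely the strategy of Lemma \ref{cod2c} and the subsequent conclusion from Theorem \ref{hz1} (with the roles of tangent and normal projections interchanged), and finally close the argument as in the proof of Theorem \ref{dim1c}.

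First, I would substitute $n=2$ into Equation (\ref{hMAIN2}). For each fixed $s\in\{1,2,3\}$, the right hand side splits into a manifestly non-positive term (the sum over $k\neq s$) and a term of the same shape as the one treated in Lemma \ref{cod2c} but with $\eta$ replaced by $e$ and $J$ replaced by $J_{s}$. Analyzing the two cases $e_{1}^{1}=0$ and $e_{1}^{1}\neq 0$ exactly as in Lemma \ref{cod2c} (in the second case writing $e_{2}^{1}$ in an orthonormal basis $\{e_{1}^{1}/|e_{1}^{1}|,\ J_{s}(e_{1}^{1})/|e_{1}^{1}|,\ X\}$ and using $\langle e_{2}^{1},J_{s}(e_{1}^{1})\rangle^{2}=|e_{1}^{1}|^{2}|e_{2}^{1}|^{2}-\langle e_{2}^{1},e_{1}^{1}\rangle^{2}-|e_{1}^{1}|^{2}\langle e_{2}^{1},X\rangle^{2}$) shows that the integrand is non-positive. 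Stability then forces the integrand to vanish pointwise, and a term-by-term analysis gives
\begin{equation*}
e_{2}^{1}=\pm J_{s}(e_{1}^{1}) \quad \text{for every } s\in\{1,2,3\}.
\end{equation*}

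Next I would apply the quaternionic identity exactly as in the proof of Theorem \ref{hz1}. From $e_{2}^{1}=\pm J_{s}(e_{1}^{1})$ for $s=1,2$ one computes
\begin{equation*}
\langle J_{1}(e_{1}^{1}),J_{2}(e_{1}^{1})\rangle=\pm|e_{2}^{1}|^{2},
\end{equation*}
while on the other hand $\langle J_{1}(e_{1}^{1}),J_{2}(e_{1}^{1})\rangle=-\langle J_{2}J_{1}(e_{1}^{1}),e_{1}^{1}\rangle=\langle J_{3}(e_{1}^{1}),e_{1}^{1}\rangle=0$. Hence $e_{2}^{1}=0$ and, by the isometry $J_{s}$, also $e_{1}^{1}=0$. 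This means that at every $p\in\Sigma$, $d\Phi_{p}(T_{p}\Sigma)\subset\{0\}\oplus T_{\phi(p)}M$, so $d\psi_{p}\equiv 0$. Since $\Sigma$ is connected, $\psi$ is constant, $\psi\equiv r$ for some $r\in\mathbb{H}P^{m_{1}/4}$, and $\Phi(\Sigma)=\{r\}\times\phi(\Sigma)$.

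Finally, to conclude that $\phi:\Sigma\to M$ is a stable minimal immersion of dimension $2$, I would repeat the end of the proof of Theorem \ref{dim1c}: since $\Phi$ is an isometric immersion and $\psi$ is constant, $\phi$ is an isometric immersion; the normal space decomposes as $N_{p}\Sigma=(T_{\psi(p)}\mathbb{H}P^{m_{1}/4}\times\{0\})\oplus(\{0\}\times N_{\phi(p)}\phi(\Sigma))$ (using that tangent vectors have zero first component); computing the mean curvature vector of $\Phi$ on a local orthonormal frame tangent to $\Sigma$, the first component of $\bar\nabla$ vanishes and the second component is precisely the mean curvature vector of $\phi$, so $H_{\Phi}=0$ forces $H_{\phi}=0$; and stability of $\phi$ follows from stability of $\Phi$ since any normal variation of $\phi$ lifts to a normal variation of $\Phi$ with the same second variation (see the preliminaries in \cite{torralbo2014stable}). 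The main technical step is the first paragraph, namely extracting $e_{2}^{1}=\pm J_{s}(e_{1}^{1})$ for all three $s$ from the sign analysis; once this is in hand, the quaternionic identity closes everything down quickly.
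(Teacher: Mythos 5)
Your proposal is correct and follows essentially the same route as the paper: specializing Equation (\ref{hMAIN2}) to $n=2$, running the sign analysis of Lemma \ref{cod2c} with $e$ in place of $\eta$ and $J_{s}$ in place of $J$ to get $e_{2}^{1}=\pm J_{s}(e_{1}^{1})$ for all $s$, killing both projections via the quaternionic identity $\langle J_{1}(e_{1}^{1}),J_{2}(e_{1}^{1})\rangle=\langle J_{3}(e_{1}^{1}),e_{1}^{1}\rangle=0$, and then concluding as in Theorem \ref{dim1c}. No gaps.
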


\begin{proof}
Let $p\in \Sigma$. Since n=2, for $s\in \{1,2,3\}$, Equation (\ref{hMAIN2}) becomes

\begin{equation}
\begin{aligned}
\label{hk1}
\displaystyle \sum_{A=1}^{m} -\langle N_{E_{A}},J_{\Sigma}(N_{E_{A}})\rangle &=\lambda^{2}\Big (
-\sum_{\substack{k=1\\
         k\neq s}}^{3}\sum_{j=1}^{2}\sum_{\beta=1}^{d}\langle J_{k}(\eta^{1}_{\beta}),e^{1}_{j}\rangle^{2}\displaystyle +\sum_{i=1}^{2}\sum_{j=1}^{2}\langle J_{s}(e^{1}_{i}),e^{1}_{j}\rangle^{2}-\langle e^{1}_{i},e^{1}_{j}\rangle^{2}\Big )\\
        \displaystyle &=\lambda^{2} \Big (
-\sum_{\substack{k=1\\
         k\neq s}}^{3}\sum_{j=1}^{2}\sum_{\beta=1}^{d}\langle J_{k}(\eta^{1}_{\beta}),e^{1}_{j}\rangle^{2}\displaystyle- |e_{1}^{1}|^{4}-|e_{2}^{1}|^{4}-2\langle e_{1}^{1},e_{2}^{1}\rangle^{2}+2\langle J_{s}(e_{2}^{1}),e_{1}^{1}\rangle^{2}\Big ).
         \end{aligned}
\end{equation}

If $e_{1}^{1}=0$,

\begin{equation}
\label{hk2}
    \displaystyle \sum_{A=1}^{m} -\langle N_{E_{A}},J_{\Sigma}(N_{E_{A}})\rangle=\lambda^{2}\Big (
-\sum_{\substack{k=1\\
         k\neq s}}^{3}\sum_{\beta=1}^{d}\langle J_{k}(\eta^{1}_{\beta}),e^{1}_{2}\rangle^{2}\displaystyle-|e_{2}^{1}|^{4}\Big )\leq 0.
\end{equation}

If $e_{1}^{1}\neq 0$, we can write $e_{2}^{1}$ in terms of $e_{1}^{1}$, $J_{s}(e_{1}^{1})$, and $X$ for some vector $X\in T_{\psi(p)}\mathbb{H}P^{\frac{m_{1}}{4}}$ which is unitary and orthogonal to $e_{1}^{1}$ and $J_{s}(e_{1}^{1})$. Then,

\begin{center}
    $\displaystyle \langle e_{2}^{1},J_{s}(e_{1}^{1})\rangle^{2}= |e_{1}^{1}|^{2}|e_{2}^{1}|^{2}-\langle e_{2}^{1},e_{1}^{1}\rangle^{2}-|e_{1}^{1}|^{2}\langle e_{2}^{1},X \rangle^{2}.$
\end{center}
Replacing the last equation in Equation (\ref{hk1}), we have 

\begin{equation}
\begin{aligned}
\label{hk3}
    &\sum_{A=1}^{m} -\langle N_{E_{A}},J_{\Sigma}(N_{E_{A}})\rangle\\
\displaystyle &=\lambda^{2}\Big (
-\sum_{\substack{k=1\\
         k\neq s}}^{3}\sum_{j=1}^{2}\sum_{\beta=1}^{d}\langle J_{k}(\eta^{1}_{\beta}),e^{1}_{j}\rangle^{2}-(|e_{1}^{1}|^{2}-|e_{2}^{1}|^{2})^{2}  -4\langle e_{1}^{1},e_{2}^{1}\rangle^{2}
-2|e_{1}^{1}|^{2}\langle e_{2}^{1},X\rangle^{2}\Big )\leq 0.
          \end{aligned}
\end{equation}

From Equations (\ref{hk2}) and (\ref{hk3}), we have 
\begin{center}
    $\displaystyle \sum_{A=1}^{m} -\langle N_{E_{A}},J_{\Sigma}(N_{E_{A}})\rangle\leq 0$.
\end{center}
We follow the proof of Lemma \ref{cod2c} (from Equation (\ref{signo}) on) to obtain

\begin{equation}
\label{hk4}
    e_{2}^{1}=\pm J_{s}(e_{1}^{1})
\end{equation}
for $s\in\{1,2,3\}$.\\

From Equation (\ref{hk4}), we have

\begin{center}
    $\langle J_{1}(e_{1}^{1}),J_{2}(e_{1}^{1})\rangle=\langle \pm e_{2}^{1},\pm e_{2}^{1}\rangle=\pm |e_{2}^{1}|^{2}$.
\end{center}
On the other hand

\begin{center}
    $\langle J_{1}(e_{1}^{1}),J_{2}(e_{1}^{1})\rangle=-\langle J_{2}(J_{1}(e_{1}^{1})),e_{1}^{1}\rangle=\langle J_{3}(e_{1}^{1}),e_{1}^{1}\rangle=0$.
\end{center}
Hence $e_{2}^{1}=0$. Since $J_{s}$ is an isometry from Equation (\ref{hk4}), $e_{1}^{1}=0$. Therefore, at $p\in \Sigma$ we have that $e_{1}=(0, e_{1}^{2})$ and $e_{2}=(0, e_{2}^{2})$. Thus

\begin{center}
    $ d\Phi_{p}(T_{p}\Sigma)= \{ (0,x): x \in Gen \{e^{2}_{1},e^{2}_{2}\}\}$,
\end{center}
and then,
\begin{center}
    $N_{p}\Sigma = \{ (v,0): v\in T_{\psi(p)}\mathbb{H}P^{\frac{m_{1}}{4}}\}\bigoplus \{(0,w): w\in [Gen \{e^{2}_{1},e^{2}_{2}\}]^{\perp_{M}}\}$.
\end{center}

where $[.]^{\perp_{M}}$ is the orthogonal complement in $T_{\phi(p)}M$ and $Gen\{e_{1}^{2}, e_{2}^{2}\}$ is the subspace generated by the vectors $e_{1}^{2}, e_{2}^{2}$ in  $T_{\phi(p)}M$. The rest of the proof follows by applying the same proof of Theorem \ref{dim1c}.
\end{proof}

\begin{corollary}
There are no compact stable minimal immersions of dimension $2$ in the product manifold $\mathbb{H}P^{\frac{m_{1}}{4}}\times S^{s}$, $\mathbb{H}P^{\frac{m_{1}}{4}}\times \mathbb{O} P^{2}$, or $\mathbb{H}P^{\frac{m_{1}}{4}}\times \mathbb{H}P^{s}$ other than $\{r\}\times S^{2}$ in $\mathbb{H}P^{\frac{m_{1}}{4}}\times S^{2}$, where $r\in \mathbb{H}P^{\frac{m_{1}}{4}}$.
\end{corollary}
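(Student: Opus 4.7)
The plan is to apply Theorem \ref{dim2h} directly, thereby reducing the question to whether each candidate target $M\in\{S^s,\mathbb{O}P^2,\mathbb{H}P^s\}$ admits a CSMI of dimension two at all. Concretely, suppose $\Phi=(\psi,\phi):\Sigma\to\mathbb{H}P^{\frac{m_1}{4}}\times M$ is a CSMI of dimension $n=2$. Then Theorem \ref{dim2h} forces $\psi$ to be constant, $\phi:\Sigma\to M$ to be itself a CSMI of dimension $2$, and $\Phi(\Sigma)=\{r\}\times\phi(\Sigma)$ for some $r\in\mathbb{H}P^{\frac{m_1}{4}}$. So the classification of such $\Phi$ is equivalent to the classification of $2$-dimensional CSMI in each of the three factors.

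For $M=S^s$, I would invoke Theorem \ref{t1.1}: any compact minimal $n$-dimensional submanifold of $S^{n+d}$ has Morse index at least $d$, with equality only for the totally geodesic $n$-sphere. Stability (index zero) therefore forces $d=0$, so a $2$-dimensional CSMI in $S^s$ can only occur when $s=2$, in which case $\phi(\Sigma)=S^2$. For $M=\mathbb{O}P^2$, the third bullet of Theorem \ref{ohnita} forces the dimension of any CSMI to equal $8$, so dimension two is impossible. For $M=\mathbb{H}P^s$, the second bullet of Theorem \ref{ohnita} forces the dimension of any CSMI to be $4l$ for some positive integer $l$, so again $n=2$ is impossible.

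Combining the three cases, the only surviving possibility is $\Phi(\Sigma)=\{r\}\times S^2$ inside $\mathbb{H}P^{\frac{m_1}{4}}\times S^2$, which is precisely the content of the corollary. The substantive work lies entirely in Theorem \ref{dim2h}; at this stage there is no real obstacle, because the one-factor classifications in $S^s$, $\mathbb{O}P^2$, and $\mathbb{H}P^s$ were already established by Simons and Ohnita and quoted earlier in the paper.
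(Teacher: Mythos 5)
Your proposal is correct and follows essentially the same route as the paper: apply Theorem \ref{dim2h} to reduce to classifying $2$-dimensional CSMI in the second factor, then invoke Theorem \ref{t1.1} for $S^{s}$ and Theorem \ref{ohnita} for $\mathbb{O}P^{2}$ and $\mathbb{H}P^{s}$. The dimension-parity observations ($n=8$ for $\mathbb{O}P^{2}$, $n=4l$ for $\mathbb{H}P^{s}$) and the conclusion that only $\{r\}\times S^{2}$ survives match the paper's argument exactly.
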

\begin{proof}
Apply Theorem \ref{dim2h} and notice that there are no stable minimal immersions of dimension $2$ in $S^{s}$, $\mathbb{H}P^{s}$, or $\mathbb{O} P^{2}$ (see Theorems \ref{t1.1} and \ref{ohnita}) other than $S^{2}$ in $S^{2}$.
\end{proof}

\begin{corollary}
The only compact stable minimal immersion of dimension $2$ in the product space $\mathbb{H}P^{\frac{m_{1}}{4}}\times \mathbb{R}P^{s}$ is $ \{r\}\times \mathbb{R}P^{2}$, and in $\mathbb{H}P^{\frac{m_{1}}{4}}\times \mathbb{C}P^{s}$ is $\{r\}\times M$,  where $M$ is a complex submanifold of dimension $2$ immersed in  $\mathbb{C}P^{s}$ and $r\in \mathbb{H}P^{\frac{m_{1}}{4}}$
\end{corollary}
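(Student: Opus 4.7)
The plan is to derive this corollary as a direct consequence of Theorem \ref{dim2h} combined with the previously recorded classifications of compact stable minimal submanifolds in the targets $\mathbb{R}P^{s}$ and $\mathbb{C}P^{s}$, namely Theorems \ref{ohnita0} and \ref{ohnita}. No new geometric computation will be required; the heavy lifting has already been done in Subsection \ref{quaternioniccompu} and Theorem \ref{dim2h}.

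First I would apply Theorem \ref{dim2h} to the immersion $\Phi=(\psi,\phi):\Sigma\to \mathbb{H}P^{\frac{m_{1}}{4}}\times M$ with $M=\mathbb{R}P^{s}$ or $M=\mathbb{C}P^{s}$. The hypothesis $n=2$ is satisfied, so the theorem concludes that $\psi$ is constant, equal to some $r\in \mathbb{H}P^{\frac{m_{1}}{4}}$, and that $\phi:\Sigma\to M$ is itself a compact stable minimal immersion of dimension $2$. In particular $\Phi(\Sigma)=\{r\}\times \phi(\Sigma)$, which reduces the problem to classifying compact stable minimal immersions of dimension $2$ in $\mathbb{R}P^{s}$ and in $\mathbb{C}P^{s}$.

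Second, I would invoke the relevant classification result in each case. For $M=\mathbb{R}P^{s}$, Theorem \ref{ohnita0} states that every compact stable minimal $n$-dimensional submanifold of $\mathbb{R}P^{s}$ is a totally geodesic real projective subspace $\mathbb{R}P^{n}$; taking $n=2$ gives $\phi(\Sigma)=\mathbb{R}P^{2}$, hence $\Phi(\Sigma)=\{r\}\times \mathbb{R}P^{2}$. For $M=\mathbb{C}P^{s}$, the first item of Theorem \ref{ohnita} (Lawson--Simons) guarantees that every compact stable minimal submanifold of $\mathbb{C}P^{s}$ of even real dimension $n=2l$ is a complex submanifold; taking $l=1$ yields that $\phi(\Sigma)$ is a complex submanifold of real dimension $2$ in $\mathbb{C}P^{s}$, so $\Phi(\Sigma)=\{r\}\times M'$ with $M'$ complex of dimension $2$.

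The main obstacle, conceptually, is not in this corollary itself but in the theorem it rests on: Theorem \ref{dim2h}, whose proof relies on the general formula of Lemma \ref{GENERALH} and on a quaternionic analogue of the argument used in Lemma \ref{cod2c} to force both projections $e^{1}_{1}$ and $e^{1}_{2}$ to vanish via the identity $\langle J_{1}e^{1}_{1},J_{2}e^{1}_{1}\rangle =\langle J_{3}e^{1}_{1},e^{1}_{1}\rangle =0$. Once that theorem is granted, the present corollary follows from a direct appeal to the cited classifications, and the only verification needed is the trivial dimensional matching between the hypothesis $n=2$ and the listed stable minimal submanifolds of the factor spaces.
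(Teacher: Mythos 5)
Your proposal is correct and follows exactly the paper's own proof: apply Theorem \ref{dim2h} to conclude that $\psi$ is constant and $\phi:\Sigma\to M$ is a compact stable minimal immersion of dimension $2$, then invoke Theorem \ref{ohnita0} for $M=\mathbb{R}P^{s}$ and the first item of Theorem \ref{ohnita} for $M=\mathbb{C}P^{s}$ to identify $\phi(\Sigma)$. No further comment is needed.
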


\begin{proof}
Apply Theorem \ref{dim2h} and notice that the only stable minimal immersions of dimension $2$ in $\mathbb{R}P^{s}$ and $\mathbb{C}P^{s}$  are
$\mathbb{R}P^{2}$ and $M$ respectively,  where $M$ is a complex submanifold of dimension $2$ immersed in  $\mathbb{C}P^{s}$  (see Theorems \ref{ohnita0} and \ref{ohnita}).
\end{proof}

\bibliographystyle{plain}
\bibliography{references}

\end{document}